\documentclass[a4paper,11pt]{amsart}
\usepackage[english]{babel} 
\usepackage{float} 
\usepackage{amsfonts} 
\usepackage{amssymb} 
\usepackage{amsmath} 
\usepackage{amsthm} 
\usepackage[dvips]{graphicx} 
\usepackage[dvips]{rotating} 
\usepackage{curves} 
\usepackage{xcolor} 
\usepackage{eepic} 
\usepackage{enumitem} 
\usepackage{textpos} 
\usepackage{fancyvrb} 
\usepackage{breqn} 
\usepackage{csquotes} 
\usepackage{tikz} 
\usetikzlibrary{patterns} 
\usepackage{tabstackengine} 
\usepackage{tabularx} 
\stackMath 
\usepackage{algorithm,algorithmic}
\usepackage{blkarray}
\usepackage{mathtools}
\usepackage{lipsum}
\usepackage{longtable}
\usepackage{amsaddr}
\usepackage{makecell}

\RequirePackage{color}
\RequirePackage[colorlinks, urlcolor=my-blue,linkcolor=my-red,citecolor=my-green]{hyperref}
\definecolor{my-blue}{rgb}{0.0,0.0,0.6}
\definecolor{my-red}{rgb}{0.5,0.0,0.0}
\definecolor{my-green}{rgb}{0.0,0.5,0.0}
\definecolor{nicos-red}{rgb}{0.75,0.0,0.0}
\definecolor{light-gray}{gray}{0.6}
\definecolor{really-light-gray}{gray}{0.8}
\definecolor{sussexg}{rgb}{0.0,0.5,0.5}
\definecolor{sussexp}{rgb}{0.5,0.0,0.5}

\definecolor{darkgreen}{rgb}{0.0,0.5,0.0}
\definecolor{darkblue}{rgb}{0.0,0.0,0.3}
\definecolor{nicosred}{rgb}{0.65,0.1,0.1}
\definecolor{light-gray}{gray}{0.7}
\allowdisplaybreaks[1]

\newtheorem{theorem}{Theorem}[section]
\newtheorem{lemma}[theorem]{Lemma}

\newtheorem{cor}[theorem]{Corollary}

\newtheorem{assumps}[theorem]{Assumptions}

{\theoremstyle{definition}
\newtheorem{defn}[theorem]{Definition}
\newtheorem{example}[theorem]{Example}

\newtheorem{rem}[theorem]{Remark}

}

\newcommand{\cP}{{\mathcal{P}}}

\newcommand{\bR}{{\mathbb{R}}}
\newcommand{\bZ}{{\mathbb{Z}}}
\newcommand{\bN}{{\mathbb{N}}}
\newcommand{\OP}{\Omega^{P}}
\newcommand{\OPtt}{\Omega^{P^{\theta}}}
\newcommand{\ymtt}{\mathbf{y}^{m,\theta}}

\newcommand{\Mtt}{M^{\theta}}

\newcommand{\Mee}{M^{\eta}}
\newcommand{\bjmtt}{b_{j}^{m,\theta}}

\newcommand{\Rmtt}{R^{m,\theta}}
\newcommand{\lljmtt}{\lambda_{j}^{m,\theta}}

\newcommand{\llssmtt}{\lambda_{\sigma}^{m,\theta}}
\newcommand{\aassjmtt}{\alpha_{\sigma,j}^{m,\theta}}
\newcommand{\bbjmtt}{\beta_{j}^{m,\theta}}
\newcommand{\bbilee}{\beta_{i}^{\ell,\eta}}
\newcommand{\rrmtt}{\rho^{m,\theta}}
\newcommand{\Ptt}{P^{\theta}}
\newcommand{\cPtt}{{\mathcal{P}}^{\theta}}
\newcommand{\pjtt}{p_{j}^{\theta}}
\newcommand{\rjmtt}{r_{j}^{m,\theta}}
\newcommand{\Pmtt}{P^{m,\theta}}
\newcommand{\pjmtt}{p_{j}^{m,\theta}}
\newcommand{\aijeettlm}{a_{i,j}^{\eta,\theta;\ell,m}}

\newcommand{\ylee}{\mathbf{y}^{\ell,\eta}}
\newcommand{\pilee}{p_{i}^{\ell,\eta}}
\newcommand{\llilee}{\lambda_{i}^{\ell,\eta}}
\newcommand{\rrlee}{\rho^{\ell,\eta}}

\newcommand{\xxi}{\mathbf{x}_{i}}
\newcommand{\xxiln}{\mathbf{x}_{i}^{\ell,n}}
\newcommand{\Lin}{L_{i,n}}
\newcommand{\Xin}{X_{i,n}}
\newcommand{\llil}{\lambda_{i}^{\ell}}
\newcommand{\lliln}{\lambda_{i}^{\ell,n}}
\newcommand{\biln}{b_{i}^{\ell,n}}
\newcommand{\Li}{L_{i}}
\newcommand{\lli}{\lambda_{i}}
\newcommand{\yyi}{\mathbf{y}_{i}}
\newcommand{\xxil}{\mathbf{x}_{i}^{\ell}}
\newcommand{\bil}{b_{i}^{\ell}}

\let\emptyset\varnothing

\numberwithin{equation}{section}

\title[Lyapunov functions for switched systems]{Convergence of an algorithm for constructing Lyapunov functions for switched systems using meshfree collocation}

\newcommand\blfootnote[1]{%
  \begingroup
  \renewcommand\thefootnote{}\footnote{#1}%
  \addtocounter{footnote}{-1}%
  \endgroup
}

\begin{document}

\author{Jay Ward$^{*1}$, Nicos Georgiou$^1$ and Peter Giesl$^1$}
\address{$^1$University of Sussex, United Kingdom}
\email{Jay.Ward@sussex.ac.uk, N.Georgiou@sussex.ac.uk and P.A.Giesl@sussex.ac.uk}

\keywords{Dynamical system, switched system, Lyapunov function, meshfree collocation, quadratic programming, convergence.}
\subjclass[2020]{Primary: 93D30, 93-08; Secondary: 34D20, 65L07, 90C20.} 


\blfootnote{*Corresponding author: Jay Ward.}

\begin{abstract}
    Switched systems are a class of dynamical systems where trajectories switch between different systems based on a switching rule. This rule can depend on time and/or the position of the trajectory in the state space. The existence of a Lyapunov function implies the existence of a uniformly asymptotically stable equilibrium point at the origin, which we prove in this paper. A method to construct Lyapunov functions for switched systems using meshfree collocation and quadratic programming was described in a related article. We prove that, under suitable assumptions, the algorithm described in this previous work converges as the fill distance between the collocation points tends to zero.
\end{abstract}

\maketitle



\section{Introduction}
\label{secIntro}

Lyapunov functions are a useful tool to show stability properties of a dynamical system and to determine the basin of attraction of equilibria. When a dynamical system is expressed as a system of differential equations, a (strict) Lyapunov function is a positive function that decreases along solutions of the system. For a smooth function this is equivalent to having a negative orbital derivative. The existence of a Lyapunov function implies that the system has an equilibrium point that is stable and attractive. The basin of attraction of the equilibrium point can be determined through sublevel sets of the Lyapunov function. For background, see \cite{zubov1961methods}, \cite{hahn1967stability} and \cite{KhalilNonlinear}.\par
A construction method for Lyapunov functions for autonomous ODEs is developed in \cite{giesl2007meshless}. Meshfree collocation is used to find an approximation to the PDE $DV(\mathbf{x})=-\|\mathbf{x}\|^{2}$, where $D$ represents the orbital derivative. To do this, finitely many collocation points are selected and the norm-minimal function such that the PDE is satisfied at these points is found. This method is extended in \cite{giesl2018construction} where an approximate solution to a system of partial differential inequalities is found using quadratic programming. The approximate solution has been shown to converge to the solution of the system of partial differential inequalities as the fill distance decreases \cite{giesl2021minimization}.\par
It is useful to construct Lyapunov functions for a variety of dynamical systems, including switched systems. Switched systems are a family of dynamical systems where a switching rule determines which system is \enquote{switched on}. This switching rule can depend on time and/or position in the state space. Switched systems where time-dependent switching takes place are called arbitrary switched systems, and systems where state-dependent switching occurs are called variable structure systems. These systems have widespread applications in science, engineering and economics, e.g. water quality control, manufacturing systems, or aircraft control. An overview of switched systems can be found in \cite{liberzon2003switching} and a review of results related to the stability of such systems can be found in \cite{davrazos2001review}.\par
Many methods have been developed to construct Lyapunov functions for switched systems. One such method is the algorithm developed in \cite{hafstein2009algorithm} to construct Continuous and Piecewise Affine (CPA) Lyapunov functions for arbitrary switched systems. This was then developed further to construct CPA Lyapunov functions for strongly asymptotically stable differential inclusions in \cite{baier2010computing} and \cite{baier2012linear}, and control CPA Lyapunov functions for weakly asymptotically stable differential inclusions in \cite{baier2014numerical}. The algorithm was advanced in \cite{hafstein2020cpa} and \cite{hafstein2022sliding} and applied to variable structure examples. A different algorithm has been developed to construct Lyapunov functions for switched systems in \cite{ward_construction_nodate} using meshfree collocation and quadratic programming. 
A usual assumption in the literature is that the switched system follows one switching rule; however, the switched system defined in \cite{ward_construction_nodate} allows switching based on time and/or position in the state space. To construct a Lyapunov function for this system, an approximate solution to a system of partial differential inequalities $D_{i}V(\mathbf{x})\leq{b_{i}(\mathbf{x})}$ is found, where $i$ indicates the system that is \enquote{switched on}, $D_{i}$ is the orbital derivative with respect to $i$ and $b_{i}$ is a non-positive function which is zero at the equilibrium. In this paper, we show that under suitable assumptions the approximate solution converges to a solution $V$ of the system of partial differential inequalities.\par
\subsection*{Overview of the paper:} This paper provides the theoretical backbone of the algorithm to construct a Lyapunov function for a switched system that appears in \cite{ward_construction_nodate}. We show that this algorithm converges as the fill distance decreases if there exists a Lyapunov function for the system. Section \ref{secSwitchSyst} defines this switched system which allows for time- and state-dependent switching and a corresponding Lyapunov function, and introduces sufficient conditions for the equilibrium point of the system to be uniformly asymptotically stable. In Section \ref{secMinProbi} we examine a system of partial differential inequalities, the solution of which is a Lyapunov function for the switched system, and discretise it to find a minimisation problem (\ref{eqnMinProbi}). The solution to this minimisation problem is shown to converge under certain assumptions when the fill distance (\ref{eqnthmConvergence3}) decreases (i.e. the discretisation gets finer) in Section \ref{secConvergence}. In Section \ref{secConstruction}, we show equivalence of (\ref{eqnMinProbi}) to the minimisation problem from \cite{ward_construction_nodate} which means the convergence theorem from Section \ref{secConvergence} applies to the algorithm in \cite{ward_construction_nodate}. This section also contains details of the method described in \cite{ward_construction_nodate} to construct a Lyapunov function for a switched system; this is summarised in Algorithm \ref{algMain}. Finally, Section \ref{secExamples} gives examples where Algorithm \ref{algMain} is applied to two systems. In the following table we give our main results with brief descriptions and where they can be located.\medskip\\
\begin{tabular}{p{\linewidth}}
     \textbf{Section \ref{secStability}}, Theorem \ref{thmStability}:\\
     Existence of a Lyapunov function for the switched system implies uniform asymptotic stability of its equilibrium point.\medskip\\
     \textbf{Section \ref{secConvergence}}, Theorem \ref{thmConvergence}:\\
     The solution to the minimisation problem (\ref{eqnMinProbi}) converges to a Lyapunov function for the switched system as the fill distance decreases if a Lyapunov function for the system exists.\medskip\\
     \textbf{Section \ref{secFillDistance}}, Corollary \ref{corFillDistance}:\\
     A way to choose the collocation points is given that is a sufficient condition for the fill distance assumption from Theorem \ref{thmConvergence} to be satisfied.\medskip\\
     \textbf{Section \ref{secMinProb}}, Lemma \ref{lemEquivMinProbs}:\\
     The minimisation problem (\ref{eqnMinProbi}) is equivalent to the minimisation problem (8) from \cite{ward_construction_nodate} and thus the algorithm from \cite{ward_construction_nodate} converges.
\end{tabular}

\subsection*{Acknowledgements} 
Jay Ward acknowledges support by the EPSRC, via a PhD studentship funding 2889464. Nicos Georgiou and Peter Giesl gratefully acknowledge partial support from the Dr Perry James (Jim) Browne Research Centre at the Department of Mathematics, University of Sussex.

\section{Switched systems}
\label{secSwitchSyst}

In this paper, we consider switched systems that allow arbitrary switching, variable structure switching, or a combination of the two. We define a corresponding Lyapunov function and state the conditions needed for uniform asymptotic stability of the equilibrium point. The switched system we are considering is defined in (\ref{eqnfis}) in the same way as in \cite{ward_construction_nodate}. Systems are defined on subsets of the state space, with the union of these subsets making up the whole state space. The subsets may overlap; wherever this occurs, the system may switch arbitrarily to a different trajectory. This set-up was discussed in \cite{liberzon2003switching}, and it was stated that this switched system will have an asymptotically stable equilibrium point if there exists a Lyapunov function for each system in the region where it is defined. This allows multiple Lyapunov functions to be considered, as done in \cite{branicky2002multiple} and  \cite{hou1996stability}. On the other hand, \cite{ward_construction_nodate} states a theorem that the existence of a single Lyapunov function for a switched system implies that it has an asymptotically stable equilibrium point. We prove a stronger statement within this section: the existence of a Lyapunov function implies uniform asymptotic stability of the equilibrium point.\par
In Section \ref{secProblemForm} we define a switched system and a Lyapunov function for the system. We also list the assumptions we have on this system in Assumptions \ref{assumpsMain}. In the following Section \ref{secStability}, we define what is means for this switched system to have a uniformly asymptotically stable equilibrium point, and show that a sufficient condition for uniform asymptotic stability is the existence of a Lyapunov function. This result is given in Theorem \ref{thmStability}.

\subsection{Problem formulation}
\label{secProblemForm}

Let $\Omega\subseteq\bR^{d}$ be a bounded domain with Lipschitz boundary and define a switched system by autonomous systems of differential equations
\begin{equation}
    \label{eqnfis}
    \dot{\mathbf{x}}(t)=\mathbf{f}_{i}(\mathbf{x}(t))\text{, }i\in{I}:=\{1,\dots,k\}
\end{equation}
where $\mathbf{x}(t)\in\Omega\subseteq\bR^{d}$ for $t$ in the maximal interval of existence. For each $i\in{I}$ we assume that $\mathbf{f}_i$ is the restriction to $C_{i}$ of a $C^{\alpha-1}$ vector field defined on an open neighbourhood of $C_{i}$ where $\alpha>d/2+2$, and $|I|$ is finite. Assume that $\mathbf{0}\in\Omega$, ${C_{i}}\subseteq\Omega$ and that the sets $C_{i}$ are closed for all $i\in{I}$. We also assume that for the values $i$ where $\mathbf{0}\in{C_{i}}$ the functions $\mathbf{f}_{i}$ have an equilibrium point at the origin, i.e. $\mathbf{f}_{i}(\mathbf{0})=\mathbf{0}$. 
The sets $C_{i}$ are also a covering of $\Omega$, i.e. 
\begin{equation}
    \label{eqnCoveringCi}
    \bigcup_{i\in{I}}C_{i}=\Omega.
\end{equation}\par
\begin{rem}
    \label{remSolsInOmega}
    We assume that $\mathbf{x}(t)\in\Omega$ for $t$ in the maximal interval of existence since we are mainly interested in the basin of attraction (see Definition \ref{defnStability}) of the equilibrium point at the origin. Solutions starting in the basin of attraction will be defined for all positive times.
\end{rem}
The index $i$ in (\ref{eqnfis}) can change depending on time and/or position in the state space. The sets $C_{i}$ enable us to examine systems where time- and state-dependent switching can occur. Within the intersections of these sets the system switches arbitrarily between the functions that are defined. If the boundary of one of these intersections is reached, a state-dependent switch must take place if the function that is currently \enquote{switched on} is undefined in the new subset.\par
To illustrate this, consider the switched system
\begin{equation}
    \label{eqnExampleSwitchedTrajectory}
    \mathbf{\dot{x}}=\mathbf{f}_{i}(\mathbf{x})\text{, }i\in\{1,2\}
\end{equation}
and a potential partition of the domain $\Omega$ as given in Figure \ref{fig:PartitionExampleCis}. In the set $C_{1}\cap{C_{2}}$ both functions $\mathbf{f}_{1}$ and $\mathbf{f}_{2}$ are defined and the system may switch arbitrarily between them. Suppose the trajectory reaches the left boundary and moves into the subset $C_{1}\cap{C_{2}^{c}}$. If the index $i$ is $2$ it will change to $1$; if the index is already $1$ it will stay the same.
\begin{figure}[h]
    \centering

\tikzset{every picture/.style={line width=0.75pt}} 

\begin{tikzpicture}[x=0.75pt,y=0.75pt,yscale=-1,xscale=1]

\draw  [fill={rgb, 255:red, 74; green, 144; blue, 226 }  ,fill opacity=0.33 ] (18.36,40.85) .. controls (18.9,41.11) and (237.17,40.27) .. (238.64,41.74) .. controls (240.11,43.21) and (116.62,68.2) .. (122.5,100.55) .. controls (128.38,132.89) and (250.4,141.71) .. (243.05,169.64) .. controls (235.7,197.58) and (204.65,219) .. (203.51,219) .. controls (202.38,219) and (17.02,217.86) .. (18.36,218.87) .. controls (19.7,219.88) and (17.82,40.59) .. (18.36,40.85) -- cycle ;
\draw  [fill={rgb, 255:red, 208; green, 2; blue, 27 }  ,fill opacity=0.32 ] (18.36,41.85) .. controls (19.59,42.74) and (152.42,42.4) .. (153.49,42.4) .. controls (154.57,42.4) and (286.2,41.86) .. (285.39,41.85) .. controls (284.58,41.84) and (286.2,220.42) .. (285.39,219.87) .. controls (284.58,219.33) and (175.43,219.16) .. (173.96,219.16) .. controls (172.49,219.16) and (85.75,175.05) .. (96.04,133.89) .. controls (106.33,92.72) and (18.12,83.9) .. (18.12,83.9) .. controls (18.12,83.9) and (17.13,40.96) .. (18.36,41.85) -- cycle ;
\draw  [fill={rgb, 255:red, 74; green, 144; blue, 226 }  ,fill opacity=0.32 ][line width=0.75]  (303.33,54.77) -- (332.73,54.77) -- (332.73,84.17) -- (303.33,84.17) -- cycle ;
\draw  [fill={rgb, 255:red, 208; green, 2; blue, 27 }  ,fill opacity=0.3 ][line width=0.75]  (303.33,113.57) -- (332.73,113.57) -- (332.73,142.97) -- (303.33,142.97) -- cycle ;
\draw  [line width=1.5]  (18.36,40.76) -- (285.39,40.76) -- (285.39,218.78) -- (18.36,218.78) -- cycle ;
\draw  [fill={rgb, 255:red, 74; green, 144; blue, 226 }  ,fill opacity=0.32 ][line width=0.75]  (303.33,113.57) -- (332.73,113.57) -- (332.73,142.97) -- (303.33,142.97) -- cycle ;
\draw  [fill={rgb, 255:red, 208; green, 2; blue, 27 }  ,fill opacity=0.3 ][line width=0.75]  (303.33,172.38) -- (332.73,172.38) -- (332.73,201.78) -- (303.33,201.78) -- cycle ;

\draw (334.73,116.97) node [anchor=north west][inner sep=0.75pt]  [font=\normalsize]  {$=C_{1} \cap C_{2}$};
\draw (265.79,25) node [anchor=north west][inner sep=0.75pt]    {$\Omega $};
\draw (334.73,58.17) node [anchor=north west][inner sep=0.75pt]  [font=\normalsize]  {$=C_{1} \cap C_{2}^{c}$};
\draw (334.73,175.78) node [anchor=north west][inner sep=0.75pt]  [font=\normalsize]  {$=C_{1}^{c} \cap C_{2}$};

\end{tikzpicture}

    \caption{The state space $\Omega$ divided into three subsets $C_{1}\cap{C_{2}^{c}}$, $C_{1}\cap{C_{2}}$ and ${C_{1}^{c}}\cap{C_{2}}$, where $C_{1}$ and $C_{2}$ are the sets where the corresponding functions $\mathbf{f}_{1}$ and $\mathbf{f}_{2}$ from system (\ref{eqnExampleSwitchedTrajectory}) are defined, respectively.}
    \label{fig:PartitionExampleCis}
\end{figure}\par
We indicate how $i$ changes in the system (\ref{eqnfis}) using the following (anchored) switching sequence indexed by an initial state $\mathbf{x}\in\Omega$
\begin{equation}
    \label{eqnSwitchingSeqWholeSet}
    \sigma:=\{(i_{0},0),(i_{1},t_{1}),\dots,(i_{j},t_{j}),\dots:i_{j}\in{I},{j}\in\bN_{0}\}.
\end{equation}
This switching sequence completely describes a trajectory of the system (\ref{eqnfis}) according to the following rule: $(i_{j},t_{j})$ means that the system evolves according to $\dot{\mathbf{x}}(t)=\mathbf{f}_{i_{j}}(\mathbf{x}(t))$ for $t_{j}\leq{t}<t_{j+1}$. This can be used to describe a switched system that allows for both time- and state-dependent switching, as some of these times $t_{j}$ can correspond to the time at which a state change that causes the system to switch takes place. For $t>0$ we define
\begin{equation}
    \label{eqnShiftOperator}
    \theta_{t}(\sigma):=\{(i_{n-1},t),(i_{n},t_{n}-t),\dots,(i_{j},t_{j}-t),\dots:i_{j}\in{I},j\geq{n-1},t_{n-1}\leq{t}<{t_{n}}\}
\end{equation}
to be a shift operator, where $\theta_{t}(\sigma)$ is the switching sequence $\sigma$ after time $t$ has passed. Set $\Sigma_{\mathbf{x}}$ to be the set of all switching sequences that can occur when starting from the initial state $\mathbf{x}\in\Omega$. A possible system trajectory is denoted by $\mathbf{x}_{\sigma}(\cdot)$ for $\sigma\in\Sigma_{\mathbf{x}}$.
We can define $I_{j}:=[t_{j},t_{j+1})$ for $j\in\bN_{0}$ to be the interval upon which only one system $\dot{\mathbf{x}}(t)=\mathbf{f}_{i_{j}}(\mathbf{x}(t))$ is \enquote{switched on}. We assume that in any finite interval $[a,b]$ with $0<a<b$ that $|\theta_{a}(\sigma)\setminus\theta_{b}(\sigma)|$ is finite, i.e. there is a finite selection of indices $i\in{I}$ that are \enquote{switched on} in the interval $[a,b]$, see (7) from Assumptions \ref{assumpsMain}.\par
We now list the assumptions that we have made in the set-up of our switched system. 
\begin{assumps}
    \label{assumpsMain}
    \hfill
    \begin{enumerate}
        \item For each $i\in I$ we assume that $\mathbf{f}_i$ is the restriction to $C_{i}$ of a $C^{\alpha-1}$ vector field defined on an open neighbourhood of $C_{i}$ where $\alpha>d/2+2$.
        \item $C_{i}\subseteq\Omega$ and the sets $C_{i}$ are closed for all $i\in{I}$.
        \item The sets $C_{i}$ for all $i\in{I}$ are a covering of $\Omega$.
        \item If $\mathbf{0}\in{C_{i}}$ then $\mathbf{f}_{i}(\mathbf{0})=\mathbf{0}$.
        \item $\Omega\subseteq{\bR^{d}}$ is a bounded domain with Lipschitz boundary with $\mathbf{0}\in\Omega^{\circ}$.
        \item $|I|$ is finite.
        \item Switching sequences are chosen so that only finitely many switches occur in finite time.
    \end{enumerate}
\end{assumps}\par

A Lyapunov function of the switched system (\ref{eqnfis}) is a function that is non-increasing along the solutions of each differential equation in the system. For our purposes we say that $V\in{C^{1}(\Omega)}$, $V(\mathbf{0})=0$ is a Lyapunov function for the system (\ref{eqnfis}) if and only if
\begin{equation}
    \label{eqnLyapSwitch}
    \nabla{V(\mathbf{x})}\cdot\mathbf{f}_{i}(\mathbf{x})\leq-\gamma(\|\mathbf{x}\|)\text{ and }V(\mathbf{x})>0
\end{equation}
holds for all $\mathbf{x}\in\Omega$ and all $i\in{I}$ such that $\mathbf{x}\in{C_{i}}\setminus \{\mathbf{0}\}$, where $\gamma(r)>0$ for all $r\neq{0}$. The function $V$ decreases along solutions of each differential equation in the system and is zero at zero, meaning that this is actually a strict Lyapunov function for the system (\ref{eqnfis}).\par
The function $V$ from (\ref{eqnLyapSwitch}) will not just be decreasing along the solutions of each differential equation in the system (\ref{eqnfis}), it will also be decreasing along solution trajectories $\mathbf{x}(t)$ of the system as a whole. To show this, we use the switching sequence defined in (\ref{eqnSwitchingSeqWholeSet}) to describe the solution trajectories of the system (\ref{eqnfis}).\par
\begin{lemma}
    \label{lemDecreaseV}
    The function $V$ satisfying (\ref{eqnLyapSwitch}) is decreasing along solution trajectories $\mathbf{x}(t)$ of the system (\ref{eqnfis}) if $\mathbf{x}(0)\neq\mathbf{0}$.
\end{lemma}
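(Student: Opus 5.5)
We will show that $t\mapsto V(\mathbf{x}_{\sigma}(t))$ is strictly decreasing on the interval of existence whenever $\mathbf{x}_{\sigma}(0)=\mathbf{x}\neq\mathbf{0}$, for every $\sigma\in\Sigma_{\mathbf{x}}$. We argue piece by piece along the switching sequence (\ref{eqnSwitchingSeqWholeSet}) and then glue the pieces using continuity of the trajectory and of $V$.

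\textbf{Step 1 (a single interval).} On each $I_{j}=[t_{j},t_{j+1})$ the trajectory solves $\dot{\mathbf{x}}=\mathbf{f}_{i_{j}}(\mathbf{x})$, so it is $C^{1}$ there and $\mathbf{x}_{\sigma}(t)\in C_{i_{j}}$. Since $V\in C^{1}(\Omega)$, the chain rule gives
\[
\frac{d}{dt}V(\mathbf{x}_{\sigma}(t))=\nabla V(\mathbf{x}_{\sigma}(t))\cdot\mathbf{f}_{i_{j}}(\mathbf{x}_{\sigma}(t))\leq-\gamma(\|\mathbf{x}_{\sigma}(t)\|)<0
\]
whenever $\mathbf{x}_{\sigma}(t)\neq\mathbf{0}$, by (\ref{eqnLyapSwitch}). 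At $t_{j}$ this is a one-sided derivative. By the mean value theorem, $V\circ\mathbf{x}_{\sigma}$ is strictly decreasing on the closure $[t_{j},t_{j+1}]$, because the trajectory is continuous at $t_{j+1}$ and $V$ is continuous.

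\textbf{Step 2 (the trajectory never reaches the origin).} We need $\mathbf{x}_{\sigma}(t)\neq\mathbf{0}$ for all $t$. If $\mathbf{0}\in C_{i_{j}}$, then $\mathbf{f}_{i_{j}}(\mathbf{0})=\mathbf{0}$ by Assumptions \ref{assumpsMain}(4). The field $\mathbf{f}_{i_{j}}$ is $C^{\alpha-1}$ with $\alpha-1>1$, hence locally Lipschitz, so solutions are unique and a solution with $\mathbf{x}_{\sigma}(t_{j})\neq\mathbf{0}$ cannot reach $\mathbf{0}$ in finite time on $I_{j}$. Induction over $j$, starting from $\mathbf{x}(0)\neq\mathbf{0}$, gives the claim on every interval. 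Alternatively, if some first time $\tau$ had $\mathbf{x}_{\sigma}(\tau)=\mathbf{0}$, then $V$ would be strictly decreasing on $[0,\tau]$ by Step 1, giving $V(\mathbf{x}_{\sigma}(\tau))<V(\mathbf{x})$. That alone is not a contradiction with $V(\mathbf{0})=0$, so the uniqueness argument is the one to use.

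\textbf{Step 3 (gluing).} Given $0\leq s<t$, Assumptions \ref{assumpsMain}(7) says only finitely many switching times $t_{j}$ lie in $[s,t]$. We chain the strict inequalities from Step 1 across consecutive subintervals, using continuity of $\mathbf{x}_{\sigma}$ at the switching times, to obtain $V(\mathbf{x}_{\sigma}(t))<V(\mathbf{x}_{\sigma}(s))$.

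The main obstacle is Step 2, together with the care needed at switching instants. In particular, the trajectory must lie in $C_{i_{j}}$ on all of $[t_{j},t_{j+1}]$ so that (\ref{eqnLyapSwitch}) applies at every point; closedness of $C_{i_{j}}$ gives this at the endpoint. We also need uniqueness near the origin for the extension of $\mathbf{f}_{i_{j}}$, which follows from the regularity in Assumptions \ref{assumpsMain}(1).
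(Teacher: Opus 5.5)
Your proposal is correct and follows essentially the same route as the paper: strict decrease of $V$ along the trajectory on each switching interval $I_{j}$ (the paper integrates the orbital derivative, you use the chain rule and the mean value theorem), extended to the closed interval by continuity at $t_{j+1}$, then chained across the finitely many switching times. Your Step 2 goes further than the paper: you use the local Lipschitz continuity of the extended $\mathbf{f}_{i_j}$ and $\mathbf{f}_{i_j}(\mathbf{0})=\mathbf{0}$ to prove that the trajectory never reaches the origin, whereas the paper simply assumes $\mathbf{x}(T_{1}),\mathbf{x}(T_{2})\neq\mathbf{0}$.
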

\begin{proof}
    Choose $0<T_{1}<T_{2}$, $j\in\bN_{0}$ and assume that $\mathbf{x}(T_{1}),\mathbf{x}(T_{2})\neq\mathbf{0}$. Recall that $I_{j}:=[t_{j},t_{j+1})$ for $j\in\bN_{0}$ are intervals upon which only one system $\dot{\mathbf{x}}(t)=\mathbf{f}_{i_{j}}(\mathbf{x}(t))$ from (\ref{eqnfis}) is \enquote{switched on}, where $t_{j}$ and $t_{j+1}$ are switching times (see (\ref{eqnSwitchingSeqWholeSet})). We wish to show that in both cases where $T_{1},T_{2}\in{I_{j}}$ and $T_{1}\in{I_{j}}$, $T_{2}\notin{I_{j}}$ we have $V(\mathbf{x}(T_{1}))>V(\mathbf{x}(T_{2}))$.\par
First, we show this for $T_{1},T_{2}\in{I_{j}}$. If we integrate (\ref{eqnLyapSwitch}) on the interval $[T_{1},T_{2}]$ we have
\begin{equation}
    \label{eqnIntegrateOrbDerv}
    \int_{T_{1}}^{T_{2}}\nabla{V(\mathbf{x}(t))}\cdot\mathbf{f}_{i_{j}}(\mathbf{x}(t))dt=\int_{\mathbf{x}(T_{1})}^{\mathbf{x}(T_{2})}\nabla{V(\mathbf{x}})\cdot{d\mathbf{x}}=V(\mathbf{x}(T_{2}))-V(\mathbf{x}(T_{1}))
\end{equation}
where the first equality uses that $d\mathbf{x}/dt=\mathbf{f}_{i_{j}}(\mathbf{x})$. The integral of the right-hand-side of (\ref{eqnLyapSwitch}) is always negative (except at $\mathbf{0}$) due to $\gamma$ being positive. Consequently, the difference found in (\ref{eqnIntegrateOrbDerv}) is negative and $V(\mathbf{x}(T_{1}))>V(\mathbf{x}(T_{2}))$.\par
Now we use this to show that if $T_{1}\in{I_{j}}$, $T_{2}\notin{I_{j}}$ then the inequality $V(\mathbf{x}(T_{1}))>V(\mathbf{x}(T_{2}))$ still holds. Integrating (\ref{eqnLyapSwitch}) on the interval $I_{j}$ is done in the same way as (\ref{eqnIntegrateOrbDerv}), and we find that $V(\mathbf{x}(t_{j}))>V(\mathbf{x}(t_{j+1}))$. As we have that $T_{2}$ is not in the interval $I_{j}$ we can assume that it is a member of a different interval $I_{\ell}$ where $j+1\leq\ell$, $\ell\in\bN_{0}$. This means that
\begin{equation}
    \label{eqnDecreasingV}
    {V(\mathbf{x}(T_{1}))}>{V(\mathbf{x}(t_{j+1}))}\geq{V(\mathbf{x}(t_{\ell}))}\geq{V(\mathbf{x}(T_{2}))}
\end{equation}
where we have $V(\mathbf{x}(t_{\ell}))\geq{V(\mathbf{x}(T_{2}))}$ using (\ref{eqnIntegrateOrbDerv}) and that $T_{2}\in{I_{\ell}}$. This gives us that $V(\mathbf{x}(T_{1}))>V(\mathbf{x}(T_{2}))$.
Therefore, due to the properties of $V$ as described in (\ref{eqnLyapSwitch}), it strictly decreases along solution trajectories $\mathbf{x}(t)$ of the system (\ref{eqnfis}).
\end{proof}

In the following section we find the sufficient conditions for the uniform asymptotic stability of the equilibrium point $\mathbf{0}$ of the system (\ref{eqnfis}). 

\subsection{Asymptotic stability}
\label{secStability}

In this section, we prove that the existence of a Lyapunov function of the form (\ref{eqnLyapSwitch}) for all $i\in{I}$ ensures that the equilibrium point at the origin of system (\ref{eqnfis}) is uniformly asymptotically stable. This is a new variation on previous results for switched systems as two switching rules can be incorporated and the asymptotic stability is uniform. We prove this by adapting classical Lyapunov methods in Theorem \ref{thmStability}; however, before this we introduce Lemma \ref{lemStableInvariantNeighbourhood} which is necessary to complete this proof.\par
The $\sigma$-flow is a continuous mapping $(\mathbf{x},t)\mapsto{S}_{\sigma}(\mathbf{x},t)$ with $\mathbf{x}\in\Omega$, $t>0$ and $\sigma\in\Sigma_{\mathbf{x}}$ which maps the state of the system (\ref{eqnfis}) at time $0$ to the state at time $t$ dependent on the switching sequence $\sigma$, i.e. $S_{\sigma}(\mathbf{x},t)=\mathbf{x}_{\sigma}(t)$. The flow for sets $K\subseteq{\Omega}$ is the set of all points which are reached after a time $t$ from any starting point $\mathbf{x}\in{K}$ under the $\sigma$-flow for all $\sigma\in\Sigma_{\mathbf{x}}$. This can be written as
\begin{equation}
    \label{eqnFlowOperatorK}
    S(K,t):=\{S_{\sigma}(\mathbf{x},t):\mathbf{x}\in{K},\sigma\in\Sigma_{\mathbf{x}}\}.
\end{equation}
The $\sigma$-flow has the properties that $S_{\sigma}(\mathbf{x},0)=\mathbf{x}$ and $S_{\sigma}(\mathbf{x},t+s)=S_{\theta_{s}(\sigma)}(S_{\sigma}(\mathbf{x},s),t)$, where $\theta_{s}$ is the shift operator defined in (\ref{eqnShiftOperator}). Using this property we can also show that $S(K,t+s)=S(S(K,s),t)$. Now, using this notation, we introduce the definition of a positively invariant set.
\begin{defn}{\rm (Positively invariant set)}
    \label{defnInvariance}
    Let $K\subseteq{\Omega}$. We say $K$ is positively invariant if $S(K,t)\subseteq{K}$ for all $t\geq{0}$.
\end{defn}
We can now define the uniform asymptotic stability of the equilibrium point $\mathbf{x}_{0}=\mathbf{0}$ for the system (\ref{eqnfis}).
\begin{defn}{\rm (Uniform asymptotic stability and basin of attraction)}
    \label{defnStability}
    The equilibrium point $\mathbf{x}_{0}=\mathbf{0}$ of system (\ref{eqnfis}) is stable if for all $\varepsilon>0$ there exists a $\delta>0$ such that for all $t\geq{0}$ the solution exists and
    \begin{equation}
        \label{eqndefnStability1}
        \|\mathbf{x}\|<\delta\Rightarrow\forall\sigma\in\Sigma_{\mathbf{x}},\phantom{x}\|S_{\sigma}(\mathbf{x},t)\|<\varepsilon.
    \end{equation}
    The equilibrium point $\mathbf{x}_{0}=\mathbf{0}$ of system (\ref{eqnfis}) is uniformly asymptotically stable if it is stable and there exists a $\delta'>0$ such that for all $\varepsilon>0$ there exists a $T>0$ such that for all times $t>T$, we have that
    \begin{equation}
        \label{eqndefnStability2}
        \|\mathbf{x}\|<\delta'\Rightarrow\forall\sigma\in\Sigma_{\mathbf{x}},\phantom{x}\|S_{\sigma}(\mathbf{x},t)\|<\varepsilon.
    \end{equation}
    The set
    \begin{equation}
        \label{eqndefnStability3}
        A(\mathbf{0}):=\{\mathbf{x}\in\Omega:\forall\sigma\in\Sigma_{\mathbf{x}}\text{, }\lim_{t\rightarrow\infty}S_{\sigma}(\mathbf{x},t)=\mathbf{0}\}
    \end{equation}
    is called the basin of attraction of $\mathbf{x}_{0}=\mathbf{0}$, where $\mathbf{x}_{0}$ is a uniformly asymptotically stable equilibrium point.
\end{defn}

\begin{lemma}
    \label{lemStableInvariantNeighbourhood}
    Let $\mathbf{x}_{0}=\mathbf{0}$ be the equilibrium point of the system (\ref{eqnfis}). The following two statements are equivalent:
    \begin{enumerate}
        \item[(i)] $\mathbf{x}_{0}$ is stable.
        \item[(ii)] For all neighbourhoods $K$ of $\mathbf{x}_{0}$ there is a neighbourhood $K'\subseteq{K}$ of $\mathbf{x}_{0}$ which is positively invariant.
    \end{enumerate}
\end{lemma}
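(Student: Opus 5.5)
We prove the two implications separately, following the classical argument for autonomous systems and adapting it to the set-valued flow $S(K,t)$ defined in (\ref{eqnFlowOperatorK}).

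For (ii)$\Rightarrow$(i), let $\varepsilon>0$ and take $K=B_{\varepsilon}(\mathbf{0})\cap\Omega$. This is a neighbourhood of $\mathbf{x}_{0}$ because $\mathbf{0}\in\Omega^{\circ}$ by Assumptions \ref{assumpsMain}(5). By (ii) there is a positively invariant neighbourhood $K'\subseteq K$ of $\mathbf{0}$. Choose $\delta>0$ with $B_{\delta}(\mathbf{0})\subseteq K'$. If $\|\mathbf{x}\|<\delta$, then $\mathbf{x}\in K'$, and for every $\sigma\in\Sigma_{\mathbf{x}}$ and $t\geq0$ we have $S_{\sigma}(\mathbf{x},t)\in S(K',t)\subseteq K'\subseteq B_{\varepsilon}(\mathbf{0})$. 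This gives (\ref{eqndefnStability1}).

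Existence of the solution for all $t\geq 0$ needs a brief comment. The trajectory stays in the set $K'$, which is contained in the ball $B_\varepsilon(\mathbf{0})$. Shrinking $\varepsilon$ if necessary, we may take $\overline{B_{\varepsilon}(\mathbf{0})}\subseteq\Omega^{\circ}$. By Remark \ref{remSolsInOmega} and the standard continuation argument applied on each switching interval $I_{j}$, the solution then cannot leave its maximal interval in finite time. Assumptions \ref{assumpsMain}(7) ensures that only finitely many intervals $I_j$ occur in any bounded time window, so the continuation passes through all switches.

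For (i)$\Rightarrow$(ii), let $K$ be a neighbourhood of $\mathbf{0}$ and choose $\varepsilon>0$ with $B_{\varepsilon}(\mathbf{0})\subseteq K$. Stability gives $\delta>0$ as in (\ref{eqndefnStability1}). Define
\[
K':=\bigcup_{t\geq0}S\bigl(B_{\delta}(\mathbf{0}),t\bigr)=\{S_{\sigma}(\mathbf{x},t):\|\mathbf{x}\|<\delta,\ \sigma\in\Sigma_{\mathbf{x}},\ t\geq0\}.
\]
We then check three properties.
\begin{enumerate}
\item $K'$ is a neighbourhood of $\mathbf{0}$, since it contains $S(B_{\delta}(\mathbf{0}),0)=B_{\delta}(\mathbf{0})$.
\item $K'\subseteq B_{\varepsilon}(\mathbf{0})\subseteq K$, by (\ref{eqndefnStability1}).
\item $K'$ is positively invariant. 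Take $s\geq0$ and $\mathbf{y}=S_{\sigma}(\mathbf{x},t)\in K'$, and any $\tau\in\Sigma_{\mathbf{y}}$. We must show that $S_{\tau}(\mathbf{y},s)$ equals $S_{\sigma'}(\mathbf{x},t+s)$ for some $\sigma'\in\Sigma_{\mathbf{x}}$.
\end{enumerate}

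The main obstacle is the third property, which requires concatenating switching sequences. We define $\sigma'$ to follow $\sigma$ on $[0,t)$ and then the time-shifted $\tau$, namely $\{(i,t_{j}+t)\}$ for $(i,t_j)\in\tau$, from time $t$ onwards. We must argue that $\sigma'\in\Sigma_{\mathbf{x}}$, i.e.\ that it is an admissible switching sequence from $\mathbf{x}$. This holds because admissibility depends only on the current state and the sets $C_{i}$: the system switched on at each time must be defined at the current position. Moreover, $\sigma'$ still has only finitely many switches in finite time, as both $\sigma$ and $\tau$ do.

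With $\sigma'$ admissible, the flow property $S_{\sigma'}(\mathbf{x},t+s)=S_{\theta_{t}(\sigma')}(S_{\sigma'}(\mathbf{x},t),s)=S_{\tau}(\mathbf{y},s)$ gives $S(K',s)\subseteq K'$. I would state this concatenation property explicitly as the key ingredient, justified by the definition of $\Sigma_{\mathbf{x}}$ as all switching sequences that can occur from $\mathbf{x}$.

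Alternatively, one could take $K'$ to be the interior or the closure of this union, but the union itself suffices for (ii).
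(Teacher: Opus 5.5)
Your proof is correct and follows the paper's argument. For (i)$\Rightarrow$(ii) you use the same set $K'=\bigcup_{t\ge0}S(I_\delta,t)$, and for (ii)$\Rightarrow$(i) the same choice of $\delta$ with $I_\delta\subseteq K'$. Your explicit concatenation of switching sequences justifies exactly the inclusion $S(S(I_\delta,t),s)\subseteq S(I_\delta,t+s)$, which the paper takes from its identity $S(K,t+s)=S(S(K,s),t)$ without proof, and your remark on global existence covers a point the paper does not address.
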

\begin{proof}
    We can write the stability property (\ref{eqndefnStability1}) in a way that is more relevant for this proof. Define 
    \begin{equation}
        \label{eqnlemprfStableInvariantNeighbourhood1}
        I_{\varepsilon}:=\{\mathbf{x}\in\Omega:\|\mathbf{x}\|<\varepsilon\}
    \end{equation}
    to be an $\varepsilon$-neighbourhood of the origin. Then $\mathbf{x}_{0}=\mathbf{0}$ is stable if and only if for every neighbourhood $I_{\varepsilon}$ there exists a neighbourhood $I_{\delta}$ with $I_{\delta}\subseteq{I_{\varepsilon}}$ such that $S(I_{\delta},t)\subseteq{I_{\varepsilon}}$ for all $t\geq{0}$.\medskip\\
    (i)$\Rightarrow$(ii): As we are assuming that $\mathbf{x}_{0}=\mathbf{0}$ is stable, the property described below (\ref{eqnlemprfStableInvariantNeighbourhood1}) holds true. Let $K$ be a neighbourhood of $\mathbf{x}_{0}$ and $\varepsilon>0$ be small enough such that $I_{\varepsilon}\subseteq{K}$. Define $K'=\cup_{t\geq{0}}S(I_{\delta},t)\subseteq{I_{\varepsilon}}\subseteq{K}$. The set $K'$ is a neighbourhood of $\mathbf{x}_{0}$ as $I_{\delta}\subseteq{K'}$. Then, using that $S(K,t+s)=S(S(K,s),t)$, we have
    \begin{equation}
        \label{eqnlemprfStableInvariantNeighbourhood2}
        S(K',s)=S\left(\bigcup_{t\geq{0}}S(I_{\delta},t),s\right)=\bigcup_{t\geq{s}}S(I_{\delta},t)\subseteq{K'}\text{ for all }s\geq{0}.\nonumber
    \end{equation}
    So there is a neighbourhood $K'$ of $\mathbf{x}_{0}$ that is positively invariant.\medskip\\
    (ii)$\Rightarrow$(i): We now assume that property (ii) holds and wish to show that $\mathbf{x}_{0}=\mathbf{0}$ is stable. As for all neighbourhoods $K$ we can find a $K'\subseteq{K}$ that is positively invariant, for any $I_{\varepsilon}:=K$ there must exist a positively invariant subset $K'\subseteq{I_{\varepsilon}}$ that is also a neighbourhood of $\mathbf{x}_{0}$. Since $K'$ is a neighbourhood of $\mathbf{x}_{0}$, there exists $\delta>0$ such that $I_\delta\subseteq K'$. Since $K'$ is positively invariant,
    \begin{equation}
        \label{eqnlemprfStableInvariantNeighbourhood3}
        S(I_\delta,t)\subseteq S(K',t)\subseteq K'\subseteq I_\varepsilon\text{ for all }t\geq{0}.\nonumber
    \end{equation}
    Therefore, we have shown that for every neighbourhood $I_{\varepsilon}$ of $\mathbf{x}_{0}$ there exists a neighbourhood $I_{\delta}$ of $\mathbf{x}_{0}$ such that $S(I_{\delta},t)\subseteq{I_{\varepsilon}}$ for all $t\geq{0}$. This tells us that the equilibrium point $\mathbf{x}_{0}$ is stable.
\end{proof}\par

We can now prove the following main result which shows that the existence of a Lyapunov function implies that the equilibrium point $\mathbf{x}_{0}=\mathbf{0}$ of the system (\ref{eqnfis}) is uniformly asymptotically stable.
\begin{theorem}
    \label{thmStability}
    If there exists a function $V\in{C^{1}(\Omega)}$ that satisfies properties (\ref{eqnLyapSwitch}) for all $i\in{I}$, then the equilibrium $\mathbf{x}_{0}=\mathbf{0}$ of system (\ref{eqnfis}) is uniformly asymptotically stable. Also, let $R>0$ be such that the level set
    \begin{equation}
        \label{eqnthmStability1}
        S_{R}:=\{\mathbf{x}\in\Omega:V(\mathbf{x})\leq{R}\}\subseteq{\Omega^{\circ}}
    \end{equation}
    is compact. Then $S_{R}$ is a subset of the basin of attraction of $\mathbf{x}_{0}$.
\end{theorem}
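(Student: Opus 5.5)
We would follow the classical Lyapunov argument, adapted to the set-valued flow $S(K,t)$. The key point is that Lemma \ref{lemDecreaseV} gives monotone decrease of $V$ along \emph{every} switched trajectory. Hence sublevel sets of $V$ are positively invariant for all switching sequences at once, and this is what makes the resulting estimates uniform in $\sigma$.

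\textbf{Stability.} Fix $\varepsilon>0$ small enough that $\overline{B_\varepsilon(\mathbf{0})}\subseteq\Omega^{\circ}$. Set $m:=\min\{V(\mathbf{x}):\|\mathbf{x}\|=\varepsilon\}$; this is positive because the sphere is compact, $V$ is continuous and $V>0$ away from $\mathbf{0}$. Let $K':=\{\mathbf{x}\in B_\varepsilon(\mathbf{0}):V(\mathbf{x})<m\}$. Since $V(\mathbf{0})=0$ and $V$ is continuous, $K'$ is a neighbourhood of $\mathbf{0}$ contained in $I_\varepsilon$.

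We claim $K'$ is positively invariant. Suppose a trajectory starting in $K'$ left $B_\varepsilon(\mathbf{0})$. By continuity of $t\mapsto S_\sigma(\mathbf{x},t)$ it would first have to hit the sphere $\|\mathbf{x}\|=\varepsilon$, where $V\ge m$. This contradicts Lemma \ref{lemDecreaseV}, which says $V$ is non-increasing along the trajectory, so $V<m$ throughout. (If the trajectory reaches $\mathbf{0}$, it stays there by Assumptions \ref{assumpsMain}(4).) Repeating this for every $\varepsilon$ gives condition (ii) of Lemma \ref{lemStableInvariantNeighbourhood}, and hence stability.

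Existence of solutions for all $t\ge0$ needs a word of its own. Trajectories remain in the compact set $\overline{B_\varepsilon(\mathbf{0})}\subseteq\Omega^{\circ}$, where each $\mathbf{f}_i$ is bounded. Together with Assumptions \ref{assumpsMain}(7), which allows only finitely many switches in finite time, this prevents finite-time blow-up and keeps trajectories away from $\partial\Omega$.

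\textbf{Uniform attractivity.} Choose $\delta'$ so that $B_{\delta'}(\mathbf{0})\subseteq K'$ for some fixed $K'$ as above; all trajectories from $B_{\delta'}$ then stay in a compact set $C\subseteq\Omega^{\circ}$. Given $\varepsilon>0$, stability provides $\eta>0$ such that entering $B_\eta$ keeps the trajectory inside $B_\varepsilon$ forever. On the compact set $C\setminus B_\eta$ we have
\[
\nabla V\cdot\mathbf{f}_i\le-\gamma(\|\mathbf{x}\|)\le-c<0
\]
for every $i$ with $\mathbf{x}\in C_i$. Integrating piecewise on the switching intervals $I_j$, exactly as in the proof of Lemma \ref{lemDecreaseV}, gives
\[
V(S_\sigma(\mathbf{x},t))\le V(\mathbf{x})-ct
\]
as long as the trajectory stays outside $B_\eta$. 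Hence every trajectory enters $B_\eta$ before time $T:=\max_{C}V/c$, and $T$ is independent of $\mathbf{x}$ and $\sigma$.

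\textbf{The sublevel set $S_R$.} Because $S_R\subseteq\Omega^{\circ}$ is compact, Lemma \ref{lemDecreaseV} shows it is positively invariant, so the same argument applies with $C=S_R$. Thus every trajectory starting in $S_R$ enters every $B_\eta$ and then remains in $B_\varepsilon$. This gives $S_\sigma(\mathbf{x},t)\to\mathbf{0}$ for all $\sigma\in\Sigma_{\mathbf{x}}$, i.e. $S_R\subseteq A(\mathbf{0})$.

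\textbf{Expected main obstacle.} The hardest step is the uniform negativity bound $c>0$. The function $\gamma$ is only assumed positive, not continuous. I would try to obtain $c$ from the continuity of $\nabla V\cdot\mathbf{f}_i$ on the compact sets $C_i\cap(C\setminus B_\eta)$, taking the maximum over the finitely many $i$ allowed by Assumptions \ref{assumpsMain}(6). Each of these continuous functions is strictly negative on its compact set, so its maximum is negative.
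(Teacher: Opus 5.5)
Your proposal is correct and is essentially the paper's argument. Stability comes from $\min\{V(\mathbf{x}):\|\mathbf{x}\|=\varepsilon\}>0$ together with a first-hitting-time contradiction with Lemma \ref{lemDecreaseV}. Uniform attractivity comes from a bound $\nabla V\cdot\mathbf{f}_i\le -c$, obtained exactly as the paper obtains $A$ (continuity of $\nabla V\cdot\mathbf{f}_i$ on the compact pieces $C_i\cap(\cdot)$ for the finitely many $i$), integrated over the switching intervals to give an entry time independent of $\mathbf{x}$ and $\sigma$.

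The only differences are in packaging. The paper argues by contradiction, using a positively invariant $U\subseteq I_\varepsilon$, a sublevel set $S_r\subseteq U$, the annulus $D=\{r\le V\le R\}$ and $T=2(R-r)/A$. You argue directly with the balls $B_\eta$ and the compact set $C\setminus B_\eta$, which has the small advantage of not needing small sublevel sets $\{V\le r\}$ of $V$ on all of $\Omega$ to be compact and contained in small balls.
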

\begin{proof}
    This proof is separated into two parts: first stability, and then uniform asymptotic stability. In the process of doing this we show that if $S_{R}$ satisfies (\ref{eqnthmStability1}) and is compact then it is a subset of the basin of attraction of $\mathbf{x}_{0}$, we briefly describe how this has been done at the end of the proof.\medskip\par
    We can find an $\varepsilon_{0}$ such that $I_{\varepsilon_{0}}$ (defined in (\ref{eqnlemprfStableInvariantNeighbourhood1})) satisfies $I_{\varepsilon_{0}}\subseteq\Omega$. Fix $0<\varepsilon\leq\varepsilon_{0}$ and let
    \begin{equation}
        \label{eqnthmprfStability1}
        \mu=\min\{V(\mathbf{x}):\|\mathbf{x}\|=\varepsilon\}>0.
    \end{equation}
    Since $V$ is a continuous function with $V(\mathbf{x})=0$ if and only if $\mathbf{x}=\mathbf{0}$, there is a $\delta>0$ (with $\delta\leq\varepsilon$) such that $V(\mathbf{x})<\mu$ for all $\mathbf{x}\in{I_{\delta}}$.\par
    Now for $\mathbf{x}_{0}=\mathbf{0}$ to be stable we want to show that $\mathbf{x}\in{I_{\delta}}$ implies $S_{\sigma}(\mathbf{x},t)\in{I_{\varepsilon}}$ for all $\sigma\in\Sigma_{\mathbf{x}}$ and all $t\geq{0}$. Assume that this is false. Then there is an $\mathbf{x}\in{I_{\delta}}$, a time $t_{\varepsilon}\geq{0}$ and a $\sigma\in\Sigma_{\mathbf{x}}$ such that $S_{\sigma}(\mathbf{x},t)$ is in $I_{\varepsilon}$ up to the time $t_{\varepsilon}$, when it lies on the boundary, i.e. $\|S_{\sigma}(\mathbf{x},t)\|<\varepsilon$ for all $t\in[0,t_{\varepsilon})$, $\|S_{\sigma}(\mathbf{x},t_{\varepsilon})\|=\varepsilon$. Since (\ref{eqnLyapSwitch}) implies that $V$ is decreasing along solution trajectories as was shown in Lemma \ref{lemDecreaseV}, $V(\mathbf{x})<\mu$ implies that $V(S_{\sigma}(\mathbf{x},t_{\varepsilon}))<\mu$. This contradicts (\ref{eqnthmprfStability1}) since $\|S_{\sigma}(\mathbf{x},t_{\varepsilon})\|=\varepsilon$.\medskip\par
    Now that we have shown stability of the equilibrium point $\mathbf{x}_{0}=\mathbf{0}$, we wish to show that it is also uniformly asymptotically stable. We do this by way of contradiction. We assume the opposite of (\ref{eqndefnStability2}) which means that
    \begin{itemize}
        \item[(1)] for all neighbourhoods $I_{\delta'}$ of $\mathbf{x}_{0}$ there exists a neighbourhood $I_{\varepsilon}\subseteq{I_{\delta'}}$ of $\mathbf{x}_{0}$ where for all $T>0$ there exists a time $t>T$, $\mathbf{x}'\in{I_{\delta'}}$ and a $\sigma\in\Sigma_{\mathbf{x}'}$ with $S_{\sigma}(\mathbf{x}',t)\notin{I_{\varepsilon}}$.
    \end{itemize}
    Here we have assumed that $\varepsilon\leq\delta'$ as the proof of the alternative is trivial due to the stability of $\mathbf{x}_{0}$. As we know that there exists a Lyapunov function $V$ that satisfies properties (\ref{eqnLyapSwitch}), we know that this $V$ is positive except at the origin (and decreases along solution trajectories). Therefore, we can find a level set $S_{R}$ (\ref{eqnthmStability1}) that is a compact subset of $\Omega$. Now we choose $\delta'>0$ such that $I_{\delta'}\subseteq{S_{R}}$. By assumption (1) there exists an $\varepsilon$ with $I_{\varepsilon}\subseteq{I_{\delta'}}$. Using Lemma \ref{lemStableInvariantNeighbourhood} we find a positively invariant neighbourhood of $\mathbf{x}_{0}$ which we label $U$, where $U\subseteq{I_{\varepsilon}}$. Now by the properties of $V$ we know there exists a level set $S_{r}:=\{\mathbf{x}:V(\mathbf{x})\leq{r}\}$ with $r<R$ such that $S_{r}\subseteq{U}$. Finally, define the set $D:=\{\mathbf{x}:r\leq{V(\mathbf{x})}\leq{R}\}$. The set-up of these sets is illustrated in Figure \ref{fig:StabilitySets}.\par
    \begin{figure}[h]
        \centering

\tikzset{every picture/.style={line width=0.75pt}} 

\begin{tikzpicture}[x=0.75pt,y=0.75pt,yscale=-1,xscale=1]

\draw  [fill={rgb, 255:red, 208; green, 2; blue, 27 }  ,fill opacity=0.26 ][dash pattern={on 1.5pt off 5.25pt}][line width=1.5]  (154.84,180.75) .. controls (154.84,94.65) and (224.65,24.84) .. (310.75,24.84) .. controls (396.85,24.84) and (466.66,94.65) .. (466.66,180.75) .. controls (466.66,266.85) and (396.85,336.66) .. (310.75,336.66) .. controls (224.65,336.66) and (154.84,266.85) .. (154.84,180.75) -- cycle ;
\draw  [fill={rgb, 255:red, 255; green, 255; blue, 255 }  ,fill opacity=1 ][dash pattern={on 5.25pt off 7.5pt}][line width=1.5]  (249.06,180.75) .. controls (249.06,146.68) and (276.68,119.06) .. (310.75,119.06) .. controls (344.82,119.06) and (372.44,146.68) .. (372.44,180.75) .. controls (372.44,214.82) and (344.82,242.44) .. (310.75,242.44) .. controls (276.68,242.44) and (249.06,214.82) .. (249.06,180.75) -- cycle ;
\draw   (171.25,180.75) .. controls (171.25,103.71) and (233.71,41.25) .. (310.75,41.25) .. controls (387.79,41.25) and (450.25,103.71) .. (450.25,180.75) .. controls (450.25,257.79) and (387.79,320.25) .. (310.75,320.25) .. controls (233.71,320.25) and (171.25,257.79) .. (171.25,180.75) -- cycle ;
\draw  [color={rgb, 255:red, 255; green, 0; blue, 0 }  ,draw opacity=1 ][line width=3]  (236.13,180.75) .. controls (236.13,139.54) and (269.54,106.13) .. (310.75,106.13) .. controls (351.96,106.13) and (385.38,139.54) .. (385.38,180.75) .. controls (385.38,221.96) and (351.96,255.38) .. (310.75,255.38) .. controls (269.54,255.38) and (236.13,221.96) .. (236.13,180.75) -- cycle ;
\draw  [fill={rgb, 255:red, 0; green, 0; blue, 0 }  ,fill opacity=1 ] (307,180.75) .. controls (307,178.68) and (308.68,177) .. (310.75,177) .. controls (312.82,177) and (314.5,178.68) .. (314.5,180.75) .. controls (314.5,182.82) and (312.82,184.5) .. (310.75,184.5) .. controls (308.68,184.5) and (307,182.82) .. (307,180.75) -- cycle ;
\draw [line width=1.5]  [dash pattern={on 5.25pt off 7.5pt}]  (498,54) -- (556,54) ;
\draw [line width=1.5]  [dash pattern={on 1.5pt off 5.25pt}]  (498,85) -- (556,85) ;
\draw  [fill={rgb, 255:red, 255; green, 0; blue, 0 }  ,fill opacity=0.25 ] (532,135) -- (556,135) -- (556,159) -- (532,159) -- cycle ;
\draw   (226.95,180.75) .. controls (226.95,134.47) and (264.47,96.95) .. (310.75,96.95) .. controls (357.03,96.95) and (394.55,134.47) .. (394.55,180.75) .. controls (394.55,227.03) and (357.03,264.55) .. (310.75,264.55) .. controls (264.47,264.55) and (226.95,227.03) .. (226.95,180.75) -- cycle ;
\draw [color={rgb, 255:red, 255; green, 0; blue, 0 }  ,draw opacity=1 ][line width=3]    (498,116) -- (556,116) ;

\draw (381,70.4) node [anchor=north west][inner sep=0.75pt]    {$I_{\delta '}$};
\draw (229,108.4) node [anchor=north west][inner sep=0.75pt]    {$I_{\varepsilon }$};
\draw (301,186.4) node [anchor=north west][inner sep=0.75pt]    {$\mathbf{x}_{0}$};
\draw (573,46.4) node [anchor=north west][inner sep=0.75pt]    {$=S_{r}$};
\draw (573,76.4) node [anchor=north west][inner sep=0.75pt]    {$=S_{R}$};
\draw (573,138.4) node [anchor=north west][inner sep=0.75pt]    {$=D$};
\draw (574,107.4) node [anchor=north west][inner sep=0.75pt]    {$=U$};

\end{tikzpicture}

        \caption{The set-up of the sets described in the proof of Theorem \ref{thmStability}, specifically the section on proving uniform asymptotic stability of the equilibrium point $\mathbf{x}_{0}$. The sets $I_{\delta'}$, $I_{\varepsilon}$ and $U$ are neighbourhoods of $\mathbf{x}_{0}$ where $U$ is positively invariant; $S_{r}$ and $S_{R}$ are level sets of the Lyapunov function $V$.}
        \label{fig:StabilitySets}
    \end{figure}
    Since $S_{R}$ is compact and $S_{r}$ is a neighbourhood of the origin, the set $D$ is compact and does not contain the origin. For each $i$ such that $D\cap C_{i}\ne\varnothing$, the function $x\mapsto -\nabla V(\mathbf{x})\cdot f_{i}(\mathbf{x})$ is continuous and strictly positive on the compact set $D\cap C_{i}$. Hence
    \begin{equation}
        \label{eqnthmprfStability1.1}
        a_{i}:=\min_{\mathbf{x}\in D\cap C_{i}}\bigl(-\nabla V(\mathbf{x})\cdot f_{i}(\mathbf{x})\bigr)>0.\nonumber
    \end{equation}
    Since $I$ is finite, we may define
    \begin{equation}
        \label{eqnthmprfStability1.2}
        A:=\min_{\{i:D\cap C_{i}\ne\varnothing\}} a_{i}>0.\nonumber
    \end{equation}
    Then, for all $i\in{I}$ and $\mathbf{x}\in D\cap C_{i}$,
    \begin{equation}
        \label{eqnthmprfStability2}
        \nabla V(\mathbf{x})\cdot f_{i}(\mathbf{x})\le -A.
    \end{equation}
    This means that for all $\mathbf{x}\in{D}$ we have that $-\gamma(\|\mathbf{x}\|)\leq-A$. Choose the time $T:=2(R-r)/A>0$. Now we have fixed our $\delta'$, $\varepsilon$ and $T$ that we use in (1) and it now becomes
    \begin{itemize}
        \item[($\ast$)] there exists a $t>T$, $\mathbf{x}'\in{I_{\delta'}}$ and a $\sigma\in\Sigma_{\mathbf{x}'}$ such that $S_{\sigma}(\mathbf{x}',t)\notin{I_{\varepsilon}}$.
    \end{itemize}\par
    To obtain a contradiction we consider two separate cases which cover all of the elements of $I_{\delta'}$. The first case we look at is $\mathbf{x}\in{S_{r}}$, and the second case is $\mathbf{x}\in{D}$.\\
    \textit{Case 1:} $\mathbf{x}\in{S_{r}}\subseteq{U}\subseteq{I_{\delta'}}$. As $U$ is a positively invariant set we have that $S_{\sigma}(\mathbf{x},t)\in{U}\subseteq{I_{\varepsilon}}$ for all $\sigma\in\Sigma_{\mathbf{x}}$ and $t\geq{0}$. This gives us a contradiction to ($\ast$) for any $\mathbf{x}\in{S_{r}}$.\\
    \textit{Case 2:} $\mathbf{x}\in{D}$. To find a contradiction to ($\ast$) we prove the following lemma and use the arguments made in Case 1.
    \begin{lemma}
    \label{lemStabProof}
        Let $V\in{C^{1}(\Omega)}$ be a function that satisfies properties (\ref{eqnLyapSwitch}) for all $i\in{I}$. Define $S_{R}$ and $S_{r}$ to be level sets as in (\ref{eqnthmStability1}) with $r<R$, and $D:=\{\mathbf{x}\in\Omega:r\leq{V(\mathbf{x})}\leq{R}\}$. Set $T:=2(R-r)/A$ with $A$ defined as in (\ref{eqnthmprfStability2}). Then for all $\mathbf{x}\in{D}$ and switching sequences $\sigma\in\Sigma_{\mathbf{x}}$ there exists a time $t\leq{T}$ such that $S_{\sigma}(\mathbf{x},t)\in{S_{r}}$.
    \end{lemma}
    \begin{proof}
        We prove this by way of contradiction. Assume that there exists an $\mathbf{x}\in{D}$ and a switching sequence $\sigma\in\Sigma_{\mathbf{x}}$ such that $S_{\sigma}(\mathbf{x},t)\notin{S_{r}}$ for all $t\leq{T}$. Then $V(S_{\sigma}(\mathbf{x},T))>r$. As $\mathbf{x}\in{D}$ then $\mathbf{x}\in{S_{R}}$, which means that $S_{\sigma}(\mathbf{x},t)\in{S_{R}}$ for all $\sigma\in\Sigma_{\mathbf{x}}$ and $t\geq{0}$, as $V$ is decreasing. Then, using our assumption we have $S_{\sigma}(\mathbf{x},t)\in{D}$ for all $t\leq{T}$.
        Using the definition of $T$,
    \begin{equation}
        \label{eqnthmprfStability3}
        V(S_{\sigma}(\mathbf{x},T))>r=R-\frac{1}{2}AT\geq{V(\mathbf{x})-\frac{1}{2}AT}.
    \end{equation}
    Integrating (\ref{eqnLyapSwitch}) with techniques similar to (\ref{eqnIntegrateOrbDerv}) between $0$ and $T$ and using (\ref{eqnthmprfStability2}) we find that for all $\mathbf{x}\in{D}$, $\sigma\in\Sigma_{\mathbf{x}}$ and $S_{\sigma}(\mathbf{x},t)\in{D}$ for $t\in[0,T]$
    \begin{equation}
        \label{eqnthmprfStability4}
        V(S_{\sigma}(\mathbf{x},T))\leq{V(\mathbf{x})-A{T}}.\nonumber
    \end{equation}
    This gives us a contradiction to  (\ref{eqnthmprfStability3}). Therefore, we have shown this lemma to be true. 
    \end{proof}\par
    As $I_{\delta'}\subseteq{S_{r}\cup{D}}=S_{R}$ the combination of Case 1 and 2 contradicts assumption ($\ast$). This means we have a contradiction to assumption (1) and $\mathbf{x}_{0}$ is uniformly asymptotically stable.\medskip\par
    In the process of showing $\mathbf{x}_{0}=\mathbf{0}$ is uniformly asymptotically stable, we have shown for all $\varepsilon>0$ there exists a $T$ such that for all $t>T$, $\mathbf{x}\in{S_{R}}$ and $\sigma\in\Sigma_{\mathbf{x}}$ we have $S_{\sigma}(\mathbf{x},t)\in{I_{\varepsilon}}$. This is a stronger statement than is needed and therefore is enough to prove that the compact level set $S_{R}$ (\ref{eqnthmStability1}) is a subset of the basin of attraction of $\mathbf{x}_{0}=\mathbf{0}$, defined in (\ref{eqndefnStability3}).
\end{proof}

\section{Minimisation problem}
\label{secMinProbi}

We wish to find a minimisation problem where the solution $v$ is an approximation of a function $V$ that satisfies the conditions in (\ref{eqnLyapSwitch}). This allows us to construct a Lyapunov function for the switched system (\ref{eqnfis}). A similar method is used in \cite{giesl2018construction} for autonomous ODEs. There, the authors construct a Lyapunov function for a single system by finding a solution to the system of partial differential inequalities $DV(\mathbf{x})\leq{r}(\mathbf{x})$ where $D$ denotes the orbital derivative, and $r(\mathbf{x})$ is negative. To generalise this method, we define for all $i=1,\dots,k$ the linear differential operator
\begin{equation}
    \label{eqnDi}
    D_{i}V(\mathbf{x}):=\nabla_{\mathbf{x}}
    V(\mathbf{x})\cdot\mathbf{f}_{i}(\mathbf{x})\phantom{xxx}\forall\mathbf{x}\in{C_{i}}
\end{equation}
where for each $i\in{I}$ we assume that $\mathbf{f}_i$ is the restriction to $C_{i}$ of a $C^{\alpha-1}$ vector field defined on an open neighbourhood of $C_{i}$ where $\alpha>d/2+2$. We wish to solve
\begin{equation}
    \label{eqnDiV}
    D_{i}V(\mathbf{x})\leq{b_{i}}(\mathbf{x})
\end{equation}
where $b_{i}(\mathbf{x})$ are continuous functions on $C_{i}$ that are negative for all $\mathbf{x}\in{C_{i}}\setminus{\{\mathbf{0}\}}$, $b_{i}(\mathbf{0})=0$. Finding a solution to this system of partial differential inequalities is equivalent to finding a Lyapunov function as in (\ref{eqnLyapSwitch}).\par
In Section \ref{secRKHS} we define a reproducing kernel Hilbert space $H$. We choose an approximating function $v$ of the function $V$ (\ref{eqnDiV}) to belong to such a space. We also introduce some properties of $H$ that are used later in Section \ref{secConvergence} and in Algorithm \ref{algMain}. In Section \ref{secDiscret} we discretise problem (\ref{eqnDiV}) by selecting collocation points. This gives us a minimisation problem which we solve to find an approximating function $v$. In Section \ref{secConvergence} we show that as the fill distance (\ref{eqnthmConvergence3}) decreases the approximating function $v$ converges strongly to $V$. This result is contained in Theorem \ref{thmConvergence}. An assumption on the fill distance in this theorem is addressed in Section \ref{secFillDistance}, where we present a way of choosing the collocation points under which the assumption holds.

\subsection{Reproducing kernel Hilbert spaces}
\label{secRKHS}

The solution $v$ to the minimisation problem (\ref{eqnMinProbi}), see Section \ref{secDiscret}, belongs to a reproducing kernel Hilbert space $H$. We recall various properties of these spaces within this section, as well as specifying the reproducing kernel $\Phi:\bR^{d}\times\bR^{d}\rightarrow\bR$ we will be using in the construction of the function $v$ in Section \ref{secQuadProg}. For more information on reproducing kernel Hilbert spaces, see \cite{wendland2004scattered}.
\begin{defn}{\rm (Reproducing Kernel Hilbert Space (RKHS))}
    \label{defnRKHS}
    Let $\emptyset\neq\Omega\subseteq\bR^{d}$. A Hilbert space $H=H(\Omega)$ of continuous functions $f:\Omega\rightarrow\bR$ with inner product $\langle\cdot,\cdot\rangle_{H}$ is called a reproducing kernel Hilbert space, abbreviated RKHS, if there is a function $\Phi:\Omega\times\Omega\rightarrow\bR$ with
    \begin{enumerate}
        \item $\Phi(\cdot,\mathbf{x})\in{H}$ for all ${\mathbf{x}\in\Omega}$
        \item $g(\mathbf{x})=\langle{g(\cdot),\Phi(\cdot,\mathbf{x})}\rangle_{H}$ for all $g\in{H}$ and all $\mathbf{x}\in\Omega$.
    \end{enumerate}
    The function $\Phi$ is called the reproducing kernel of $H$. The reproducing kernel is called positive definite if for any pair of pairwise distinct points $\{x_{1},...,x_{N}\}\subseteq\Omega$ the matrix $(\Phi(x_{i},x_{j}))_{i,j=1,...,N}$ is positive definite.
\end{defn}

As in \cite{giesl2021minimization}, the reproducing kernel Hilbert spaces that we consider are Sobolev spaces $H=H^{\alpha}(\Omega)$. We use the definition of the Sobolev space $W^{k,p}(\Omega)$ found in \cite{yosida2012functional} and set $H^{\alpha}(\Omega)=W^{\alpha,2}(\Omega)$ so that for $\alpha\in\bN_{0}$
\begin{equation}
    \label{eqnHalpha}
    H^{\alpha}(\Omega):=\{u\in{L_{2}}(\Omega):|\beta|\leq\alpha,D^{\beta}u\in{L_{2}}(\Omega)\}
\end{equation}
with scalar product $\displaystyle \langle{u,v}\rangle=\sum_{|\beta|\leq\alpha}\int_{\Omega}D^{\beta}u(\mathbf{x})D^{\beta}v(\mathbf{x})d\mathbf{x}$. If $\alpha$ is not an integer, then we use operator interpolation theory to define the space and norm, for more details see \cite{adams2003sobolev} and \cite{MR2373954}.\par
The following two lemmas will be used multiple times in Section \ref{secConvergence}, in the proof of both Theorem \ref{thmConvergence} and Lemma \ref{lemEquivOptimProb}. 

\begin{lemma}
    \label{lemBddLipsRKHS}
    If $\Omega\subseteq\bR^{d}$ is a bounded domain with Lipschitz boundary, then the space $H^{\alpha}(\Omega)$ for $\alpha>d/2$ is a RKHS.
\end{lemma}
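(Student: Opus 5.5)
The plan is to verify the two defining properties of a RKHS from Definition \ref{defnRKHS}: that $H^{\alpha}(\Omega)$ consists of continuous functions, and that point evaluation is bounded. The reproducing kernel then comes from the Riesz representation theorem. The key tool is the Sobolev embedding theorem. Since $\Omega$ is a bounded Lipschitz domain, there is a bounded linear extension operator $E:H^{\alpha}(\Omega)\to H^{\alpha}(\bR^{d})$ (Stein's extension theorem; for non-integer $\alpha$ it follows by interpolation between integer orders). So it suffices to work on $\bR^{d}$, where the Fourier characterisation of the norm is available.

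First, on $\bR^{d}$ I use the Fourier characterisation: $u\in H^{\alpha}(\bR^{d})$ if and only if $(1+\|\omega\|^{2})^{\alpha/2}\hat u\in L_{2}(\bR^{d})$, with an equivalent norm. Cauchy--Schwarz then gives
\begin{equation*}
\|\hat u\|_{L_{1}}\le \Bigl(\int_{\bR^{d}}(1+\|\omega\|^{2})^{-\alpha}d\omega\Bigr)^{1/2}\|u\|_{H^{\alpha}(\bR^{d})},
\end{equation*}
and the integral is finite exactly because $\alpha>d/2$. Hence $\hat u\in L_{1}$, so $u$ has a continuous (indeed uniformly continuous and bounded) representative given by Fourier inversion. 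Moreover $\|u\|_{\infty}\le C\|u\|_{H^{\alpha}(\bR^{d})}$.

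Second, for $g\in H^{\alpha}(\Omega)$ I identify $g$ with the continuous representative of $Eg$ restricted to $\Omega$. This representative does not depend on the choice of extension, since two continuous functions agreeing almost everywhere on the open set $\Omega$ agree everywhere there. Then for each $\mathbf{x}\in\Omega$,
\begin{equation*}
|g(\mathbf{x})|\le C\|Eg\|_{H^{\alpha}(\bR^{d})}\le C\|E\|\,\|g\|_{H^{\alpha}(\Omega)},
\end{equation*}
so the point evaluation $\delta_{\mathbf{x}}$ is a bounded linear functional on the Hilbert space $H^{\alpha}(\Omega)$. Completeness of $H^{\alpha}(\Omega)$ is standard, including for fractional $\alpha$ defined by interpolation.

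Finally, by the Riesz representation theorem, for each $\mathbf{x}\in\Omega$ there is a unique $\Phi(\cdot,\mathbf{x})\in H^{\alpha}(\Omega)$ with $g(\mathbf{x})=\langle g,\Phi(\cdot,\mathbf{x})\rangle_{H}$ for all $g$. This gives both conditions of Definition \ref{defnRKHS}. The main obstacle is the extension step, in particular justifying the extension operator and the norm equivalence for non-integer $\alpha$. I would cite the standard results (Stein's extension theorem for Lipschitz domains and interpolation of Sobolev spaces, as in \cite{adams2003sobolev} and \cite{wendland2004scattered}) rather than reprove them.
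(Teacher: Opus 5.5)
Your proposal is correct and follows essentially the same route as the paper: a bounded extension operator $E:H^{\alpha}(\Omega)\to H^{\alpha}(\bR^{d})$ for the Lipschitz domain gives the Sobolev embedding $H^{\alpha}(\Omega)\hookrightarrow C(\Omega)\cap L_{\infty}(\Omega)$ for $\alpha>d/2$, hence bounded point evaluations, hence an RKHS. The paper simply cites the embedding and the ``bounded point evaluations $\Rightarrow$ RKHS'' fact from \cite{wendland2004scattered}. You instead prove the embedding on $\bR^{d}$ via the Fourier/Cauchy--Schwarz estimate, and you make explicit both the choice of continuous representatives and the Riesz representation step.
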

\begin{proof}
    If $\Omega\subseteq\bR^{d}$ is a bounded domain with Lipschitz boundary, then there exists a continuous linear extension operator \cite{wendland2004scattered} $E:H^{\alpha}(\Omega)\rightarrow{H^{\alpha}(\bR^{d})}$. This has the properties $Eu\big|_{\Omega}=u$ and $\|Eu\|_{H^{\alpha}(\bR^{d})}\leq{C\|u\|_{H^{\alpha}(\Omega)}}$ for all $u\in{H^{\alpha}(\Omega)}$ with a fixed constant $C>0$. As stated in \cite{giesl2021minimization}, a consequence of the existence of the extension operator is the Sobolev embedding theorem for $H^{\alpha}(\Omega)$, which tells us that $H^{\alpha}(\Omega)\subseteq{C(\Omega)\cap{L_{\infty}(\Omega)}}$ if $\alpha>d/2$. As this is a continuous embedding, we have that $\|f\|_{C(\Omega)\cap{L_{\infty}(\Omega)}}\leq{C\|f\|_{H^{\alpha}(\Omega)}}$ for some $f\in{H^{\alpha}(\Omega)}$. The first norm can be rewritten as follows $\|f\|_{C(\Omega)\cap{L_{\infty}(\Omega)}}=\max_{x\in\Omega}|f(x)|$, which tells us that
    \begin{equation}
        \label{eqnlemprfBddLipsRKHS1}
        |f(x)|\leq\max_{x\in\Omega}|f(x)|\leq{C\|f\|_{H^{\alpha}(\Omega)}}\Rightarrow|\delta_{x}(f)|\leq\max_{x\in\Omega}|\delta_{x}(f)|\leq{C\|f\|_{H^{\alpha}(\Omega)}}.\nonumber
    \end{equation}
    This shows continuity of the point evaluations $\delta_{x}:H^{\alpha}(\Omega)\rightarrow\bR$, so our space $H^{\alpha}(\Omega)$ is a RKHS \cite{wendland2004scattered}.
\end{proof}

The explicit form of the reproducing kernel of $H^{\alpha}(\Omega)$ is usually unknown. To find a suitable substitute we use the following Lemma \ref{lemEquivNormsFourier} which can be found in \cite{giesl2021minimization} and was developed with results from \cite{wendland2004scattered}. 
The method of finding a suitable reproducing kernel for $H^{\alpha}(\Omega)$ can be summarised as follows: choose a reproducing kernel $K_{\alpha}(\mathbf{x},\mathbf{y})$ that delivers an equivalent norm to the standard norm on $H^{\alpha}(\bR^{d})$, and restrict it to find a reproducing kernel $k_{\alpha}(\mathbf{x},\mathbf{y})$ for $H^{\alpha}(\Omega)$. A reproducing kernel $K_{\alpha}$ that delivers this equivalent norm can be chosen to be a translation-invariant function $\Phi_{\alpha}(\mathbf{x}-\mathbf{y})$ with certain properties on its Fourier transform. $\Phi_{\alpha}$ can be chosen to be radial and, in particular, a Wendland function (see \cite{giesl2021minimization}), which we discuss further in (\ref{eqnWendlandDef}).

\begin{lemma}
    \label{lemEquivNormsFourier}
    Assume $\Phi_{\alpha}\in{L_{1}(\bR^{d})}\cap{C(\bR^{d})}$ has a Fourier transform $\hat{\Phi}_{\alpha}$ satisfying
    \begin{equation}
        \label{eqnlemEquivNormsFourier1}
        c_{1}(1+\|w\|_{2}^{2})^{-\alpha}\leq{\hat{\Phi}_{\alpha}(w)}\leq{c_{2}(1+\|w\|_{2}^{2})^{-\alpha}}\nonumber
    \end{equation}
    where $c_{1},c_{2}>0$ are constants, $w\in\bR^{d}$ and $\alpha>d/2$. Let $\Omega\subseteq\bR^{d}$ be a bounded domain with a Lipschitz boundary. Let $k_{\alpha}:\Omega\times\Omega\rightarrow\bR$ be defined by $k_{\alpha}(\mathbf{x},\mathbf{y})=\Phi_{\alpha}(\mathbf{x}-\mathbf{y})$, $\mathbf{x},\mathbf{y}\in\Omega$. Then there exists an inner product $\langle\cdot,\cdot\rangle_{k_{\alpha}}:H^{\alpha}(\Omega)\times{H^{\alpha}(\Omega)}\rightarrow\bR$ on $H^{\alpha}(\Omega)$ such that $k_{\alpha}$ is the reproducing kernel of $H^{\alpha}(\Omega)$ with respect to this inner product. The norm $\|\cdot\|_{k_{\alpha}}$ induced by this inner product is equivalent to the standard norm on $H^{\alpha}(\Omega)$, i.e. there are constants $C_{1},C_{2}>0$ such that
    \begin{equation}
        \label{eqnlemEquivNormsFourier2}
        C_{1}\|u\|_{k_{\alpha}}\leq\|u\|_{H^{\alpha}(\Omega)}\leq{C_{2}\|u\|_{k_{\alpha}}}\nonumber
    \end{equation}
\end{lemma}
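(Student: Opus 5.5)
The plan is to build the RKHS on $\bR^{d}$ first and then restrict to $\Omega$ using the extension operator from the proof of Lemma \ref{lemBddLipsRKHS}.

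\textbf{Step 1 (the native space on $\bR^{d}$).} I define
\[
\mathcal{N}:=\Bigl\{f\in L_{2}(\bR^{d})\cap C(\bR^{d}):\ \int_{\bR^{d}}|\hat f(w)|^{2}/\hat\Phi_{\alpha}(w)\,dw<\infty\Bigr\},
\]
with inner product
\[
\langle f,g\rangle_{\Phi}=(2\pi)^{-d/2}\int_{\bR^{d}}\hat f(w)\overline{\hat g(w)}/\hat\Phi_{\alpha}(w)\,dw .
\]
The two-sided bound on $\hat\Phi_{\alpha}$ shows that $\mathcal{N}=H^{\alpha}(\bR^{d})$, using the Fourier characterisation of the Sobolev norm, $\|f\|_{H^{\alpha}(\bR^{d})}^{2}\sim\int(1+\|w\|^{2})^{\alpha}|\hat f|^{2}$. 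It also gives the norm equivalence $c_{2}^{-1/2}\|f\|_{H^{\alpha}}\lesssim\|f\|_{\Phi}\lesssim c_{1}^{-1/2}\|f\|_{H^{\alpha}}$.

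Since $\alpha>d/2$, $\hat\Phi_{\alpha}\in L_{1}$. Fourier inversion then gives the reproducing property
\[
f(\mathbf{x})=\langle f,\Phi_{\alpha}(\cdot-\mathbf{x})\rangle_{\Phi},
\]
where I have to verify that $\Phi_{\alpha}(\cdot-\mathbf{x})\in\mathcal{N}$. This holds because the Fourier transform of this translate is $e^{-i\mathbf{x}\cdot w}\hat\Phi_{\alpha}(w)$ and $\int\hat\Phi_{\alpha}<\infty$. Hence $\Phi_{\alpha}(\mathbf{x}-\mathbf{y})$ is the reproducing kernel of $H^{\alpha}(\bR^{d})$ with the equivalent norm $\|\cdot\|_{\Phi}$.

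\textbf{Step 2 (restriction to $\Omega$).} I use the standard restriction theorem for RKHSs: the restriction of a reproducing kernel to $\Omega\times\Omega$ is the reproducing kernel of the space $\{f|_{\Omega}: f\in\mathcal{N}\}$. The norm on this space is
\[
\|u\|_{k_{\alpha}}:=\min\{\|f\|_{\Phi}: f|_{\Omega}=u\}.
\]
The minimum is attained at the orthogonal projection of any extension onto the closed subspace $\overline{\mathrm{span}}\{\Phi_{\alpha}(\cdot-\mathbf{x}):\mathbf{x}\in\Omega\}$. The inner product $\langle\cdot,\cdot\rangle_{k_{\alpha}}$ comes from polarisation of this minimal-norm expression. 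The reproducing identity follows because the minimal extension $f_{u}$ lies in that subspace, so
\[
u(\mathbf{x})=\langle f_{u},\Phi_{\alpha}(\cdot-\mathbf{x})\rangle_{\Phi}=\langle u,k_{\alpha}(\cdot,\mathbf{x})\rangle_{k_{\alpha}}.
\]

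\textbf{Step 3 (identify the space with $H^{\alpha}(\Omega)$ and compare norms).} Each restriction of an $H^{\alpha}(\bR^{d})$ function lies in $H^{\alpha}(\Omega)$, with
\[
\|u\|_{H^{\alpha}(\Omega)}\le\|f\|_{H^{\alpha}(\bR^{d})}\le C\|f\|_{\Phi}.
\]
Taking the infimum over extensions $f$ gives $\|u\|_{H^{\alpha}(\Omega)}\le C_{2}\|u\|_{k_{\alpha}}$.

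Conversely, for $u\in H^{\alpha}(\Omega)$ the Lipschitz extension operator $E$ gives an admissible extension $Eu$. Therefore
\[
\|u\|_{k_{\alpha}}\le\|Eu\|_{\Phi}\le C\|Eu\|_{H^{\alpha}(\bR^{d})}\le C'\|u\|_{H^{\alpha}(\Omega)},
\]
which gives the constant $C_{1}$. This also shows that the two spaces coincide as sets.

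\textbf{Main obstacle.} I expect the hardest part to be Step 1 for non-integer $\alpha$. There I must match the Fourier-weighted norm with the interpolation definition of $H^{\alpha}$, and on $\Omega$ the Stein/Rychkov extension operator must be bounded for fractional orders. I would cite these results (Wendland, \emph{Scattered Data Approximation}, Ch.\ 10; Adams--Fournier) rather than reprove them.
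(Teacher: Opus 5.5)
Your proposal is correct and follows essentially the same route as the paper, which simply cites Corollary 10.48 of Wendland's book for integer $\alpha$. That corollary is proved by exactly your three steps: the native space of $\Phi_{\alpha}$ equals $H^{\alpha}(\bR^{d})$ via the two-sided Fourier bounds, the kernel is restricted to $\Omega$ with the minimal-extension norm, and the Lipschitz extension operator gives the norm comparison. For non-integer $\alpha$ the paper appeals to interpolation theory, which is precisely what underlies the fractional-order bounds on the restriction and extension operators that you identify as the main obstacle and propose to cite.
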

\begin{proof}
    For $\alpha\in\bN$, this is Corollary 10.48 from \cite{wendland2004scattered}. For real $\alpha>d/2$ this then follows by interpolation theory \cite{giesl2021minimization}.
\end{proof}

In the construction of the function $v\in{H}$ we are using meshfree collocation. This method has been applied to the construction of Lyapunov functions for autonomous ODEs in \cite{giesl2007meshless} and \cite{giesl2018construction}. Following the methods explained here we assume that the reproducing kernel $\Phi$ of the Hilbert space $H$ is given by a radial basis function $\Psi_{0}$ through $\Phi(\mathbf{x},\mathbf{y})=\Psi_{0}(\|\mathbf{x}-\mathbf{y}\|_{2})$.\par
For this radial basis function we use Wendland functions, which are a family of functions introduced in \cite{wendland1995piecewise} and \cite{wendland1998error}. Wendland functions are positive definite with compact support, and are polynomials on their support. We define $\Psi_{0}(r):=\phi_{\ell,k}(cr)$, where $\phi_{\ell,k}(cr)$ is a Wendland function with $c>0$, $k\in\bN$ a smoothness parameter and $\ell=\lfloor\frac{d}{2}\rfloor+k+1$. The reproducing kernel Hilbert space that corresponds to this Wendland function is norm-equivalent to the Sobolev space $H^{k+(d+1)/2}(\bR^{d})$. Wendland functions are defined by recursion: for $\ell\in\bN$, $k\in\bN_{0}$
\begin{equation}
    \begin{split}
        \label{eqnWendlandDef}
        \phi_{\ell,0}(r) & =(1-r)_{+}^{\ell}\\
        \phi_{\ell,k+1}(r) & =\int_{r}^{1}t\phi_{\ell,k}(t)dt
    \end{split}
\end{equation}
for $r\in\bR_{0}^{+}$, where $y_{+}=y$ if $y\geq{0}$ and $y_{+}=0$ otherwise.\par
Using recursion, define $\Psi_{j}(r)=\displaystyle\frac{1}{r}\frac{d\Psi_{j-1}}{dr}(r)$ for $j=1,2$ and $r>0$. Note that $\lim_{r\rightarrow{0}}\Psi_{j}(r)$ exists if the smoothness parameter $k$ of the Wendland functions is sufficiently large.

\subsection{Discretised problem}
\label{secDiscret}

We now discretise our problem. Fix collocation points
\begin{equation}
    \label{eqnCollPtsOmega}
    \mathbf{z}^{1},\dots,\mathbf{z}^{N}\in\Omega\text{ where }Z:=\{\mathbf{z}^{1},\dots,\mathbf{z}^{N}\}
\end{equation}
where $N$ is the total number of collocation points, and take subsets of these points
\begin{equation}
    \label{eqnCollPtsi}
    X_{i}:=Z\cap{C_{i}}=\{\mathbf{x}_{i}^{1},\dots,\mathbf{x}_{i}^{\Li}\}
\end{equation}
for $i=1,\dots,k$, where $0\leq\Li\leq{N}$ is the number of collocation points in the subset $C_{i}$. We have that $X_{i}\subseteq{Z}$ for all $i\in{I}$, and
\begin{equation}
    \label{eqnZCollUnionXColl}
    Z=\bigcup_{i\in{I}}X_{i}.\nonumber
\end{equation}
The sets $X_{i}$ are not necessarily distinct as the sets $C_{i}$ can overlap.\par
We can now define
\begin{equation}
\label{eqnValbi}
    b_{i}^{1}=b_{i}(\mathbf{x}_{i}^{1}),\dots,b_{i}^{\Li}=b_{i}(\mathbf{x}_{i}^{\Li})\in\bR,
\end{equation}
and seek to find a function $v\in{H}$ that satisfies
\begin{equation}
    \label{eqnDiscretProbi}
    D_{i}v(\xxil)\leq\bil\text{ for all }\ell=1,\dots,\Li
\end{equation}
for $i=1,\dots,k$.\par
Another way to express (\ref{eqnDiscretProbi}) would be to find a function $v\in{H}$ using the information $\bil\in\bR$ generated by the functionals $\llil\in{H}^{*}$, i.e. $\llil(v)\leq\bil$ for all $i=1,\dots,k$ and $\ell=1,\dots,\Li$. We define these functionals as the orbital derivative with respect to $\mathbf{f}_{i}$ (\ref{eqnDi}) applied at a particular collocation point $\xxil$, i.e.
\begin{equation}
    \label{eqnllil}
    (\llil)v(\cdot):=(\delta_{\xxil}\circ{D_{i}})v(\cdot)=\nabla{v}(\xxil)\cdot\mathbf{f}_{i}(\xxil)
\end{equation}
for all $i=1,\dots,k$ and $\ell=1,\dots,\Li$, with $\delta$ being Dirac's delta distribution.\par
The optimal reconstruction of $v$ is then given by the solution of the problem
\begin{equation}
    \label{eqnMinProbi}
    \min\{\|v\|_{H}:v\in{H}\text{, }\llil(v)\leq\bil\text{, for all }i=1,\dots,k\text{ and }\ell=1,\dots,\Li\}.
\end{equation}
We now state the following lemma which is similar to one in \cite{giesl2018construction}, which shows that this minimisation problem has at most one solution.
\begin{lemma}
    \label{lemMinProbOneSol}
    The problem (\ref{eqnMinProbi}) has at most one solution.
\end{lemma}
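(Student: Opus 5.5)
Uniqueness will follow from strict convexity of the Hilbert norm together with convexity of the feasible set. I will argue by contradiction: suppose $v_{1}\neq v_{2}$ both solve (\ref{eqnMinProbi}), so $\|v_{1}\|_{H}=\|v_{2}\|_{H}=m$, where $m$ is the minimal value.

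First I check that the feasible set
\[
F:=\{v\in H:\llil(v)\leq\bil\text{ for all }i=1,\dots,k,\ \ell=1,\dots,\Li\}
\]
is convex. Each $\llil=\delta_{\xxil}\circ D_{i}$ is a linear functional on $H$, since $v\mapsto\nabla v(\xxil)\cdot\mathbf{f}_{i}(\xxil)$ is linear. Hence each constraint set $\{v:\llil(v)\leq\bil\}$ is a half-space, and $F$ is an intersection of half-spaces, which is convex. (Boundedness of these functionals, which follows from $\alpha>d/2+1$ and the embedding used in Lemma \ref{lemBddLipsRKHS}, gives closedness; that matters for existence but is not needed here.) Consequently the midpoint $w:=\tfrac12(v_{1}+v_{2})$ is feasible.

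Next I apply the parallelogram law in the Hilbert space $H$:
\[
\Bigl\|\tfrac12(v_{1}+v_{2})\Bigr\|_{H}^{2}=\tfrac12\|v_{1}\|_{H}^{2}+\tfrac12\|v_{2}\|_{H}^{2}-\tfrac14\|v_{1}-v_{2}\|_{H}^{2}=m^{2}-\tfrac14\|v_{1}-v_{2}\|_{H}^{2}<m^{2},
\]
where the last inequality uses $v_{1}\neq v_{2}$. So $w\in F$ has $\|w\|_{H}<m$, which contradicts the minimality of $m$. Therefore at most one solution exists.

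There is no real obstacle in this argument. The only point to be careful about is that the constraints are linear inequalities and not merely convex ones, since this is exactly what places the midpoint in $F$. The norm here is the Hilbert norm $\|\cdot\|_{H}$, for instance $\|\cdot\|_{k_{\alpha}}$ from Lemma \ref{lemEquivNormsFourier}; it is induced by an inner product, so the parallelogram law applies.
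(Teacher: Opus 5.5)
Your proof is correct, but it is organised differently from the paper's. The paper first proves a variational inequality: if $s$ is a minimiser and $v$ is any feasible function, then $\langle s,v-s\rangle_{H}\geq 0$. This comes from expanding $\|s+\alpha(v-s)\|_{H}^{2}=\|s\|_{H}^{2}+2\alpha\langle s,v-s\rangle_{H}+\alpha^{2}\|v-s\|_{H}^{2}$ and taking $\alpha>0$ small. Uniqueness then follows by applying this inequality to two minimisers $s_{1},s_{2}$ in both directions and adding, which gives $\|s_{1}-s_{2}\|_{H}^{2}\leq 0$. You bypass the variational inequality and use the midpoint and the parallelogram law, i.e.\ strict convexity of $\|\cdot\|_{H}^{2}$, directly.

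Both proofs rest on the same two facts: the feasible set is convex, and the norm comes from an inner product. You are more explicit than the paper about why convex combinations stay feasible. Your route is shorter and needs no small-$\alpha$ step. The same computation also shows quantitatively that near-minimisers are close: if $v_{1},v_{2}$ are feasible with squared norms at most $m^{2}+\epsilon$, where $m$ is the infimum, then $\|v_{1}-v_{2}\|_{H}^{2}\leq 4\epsilon$. Together with the closedness you mention and non-emptiness of the feasible set, this also gives existence. The paper's route instead yields the projection characterisation of the minimiser, which implies $\|v-s\|_{H}^{2}\leq\|v\|_{H}^{2}-\|s\|_{H}^{2}$ for every feasible $v$. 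The paper repeats this argument essentially verbatim in Step 4 of the proof of Theorem \ref{thmConvergence}.

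A minor point: your closing remark overstates things when it says that linearity, rather than mere convexity, of the constraints is what puts the midpoint in $F$. Sublevel sets of convex constraint functions are convex as well.
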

\begin{proof}
    Assume $s\in{H}$ is a minimiser and $v\in{H}$ is a solution of
    \begin{equation}
        \label{eqnlemprfMinProbOneSol1}
        \llil(v)\leq\bil\text{, for all }i=1,\dots,k\text{ and }\ell=1,\dots,\Li.
    \end{equation}
    Then $\langle{s,v-s}\rangle_{H}\geq{0}$. We assume in contradiction to this that $\langle{s,v-s}\rangle_{H}<{0}$. For $\alpha\in[0,1]$, $t=\alpha{v}+(1-\alpha)s$ satisfies (\ref{eqnlemprfMinProbOneSol1}) and
    \begin{equation}
        \label{eqnlemprfMinProbOneSol2}
        \|t\|_{H}^{2}=\|s+\alpha(v-s)\|^{2}_{H}=\|s\|_{H}^{2}+2\alpha\langle{s,v-s}\rangle_{H}+\alpha^{2}\|v-s\|_{H}^{2}<\|s\|_{H}^{2}\nonumber
    \end{equation}
    for a suitable choice of $\alpha$. This is a contradiction to $s$ being a minimiser.\par
    Now let $s_{1},s_{2}\in{H}$ be minimisers. This implies that both $\langle{s_{1},s_{2}-s_{1}}\rangle_{H}$ and $\langle{s_{2},s_{1}-s_{2}}\rangle_{H}$ are greater than or equal to zero, giving
    \begin{equation}
        \label{eqnlemprfMinProbOneSol3}
        0\leq\|s_{1}-s_{2}\|_{H}^{2}=\langle{s_{1},s_{1}-s_{2}}\rangle_{H}-\langle{s_{2},s_{1}-s_{2}}\rangle_{H}\leq{0}\nonumber
    \end{equation}
    which shows $s_{1}=s_{2}.$
\end{proof}

\subsection{Convergence}
\label{secConvergence}

In this section we show that as the fill distance (\ref{eqnthmConvergence3}) decreases, the solution $v$ converges strongly to the solution of the original system of partial differential inequalities (\ref{eqnDiV}). We do this by adapting a theorem from \cite{giesl2021minimization} for autonomous ODEs.

\begin{theorem}
    \label{thmConvergence}
    Let $\Omega\subseteq\bR^{d}$ be a bounded domain with Lipschitz boundary. Let $C_{i}\subseteq{\Omega}$ for $i=1,\dots,k$ be such that $\mathbf{f}_{i}$ (\ref{eqnfis}) is the restriction to $C_{i}$ of a $C^{\alpha-1}$ vector field defined on an open neighbourhood of $C_{i}$ where $\alpha>d/2+2$. Let $D_{i}$ be linear differential operators as defined in (\ref{eqnDi}). 
   Let $b_{i}:C_{i}\rightarrow\bR$ be continuous functions and let $H=H^{\alpha}(\Omega)$ (\ref{eqnHalpha}) with norm given by an appropriate reproducing kernel. Consider the optimisation problem for $v\in{H}$
    \begin{equation}
        \label{eqnthmConvergence2}
        \begin{cases}
            \text{\rm{minimise }}\|v\|_{H}\\
            \text{\rm{subject to }}D_{i}v(\xxi)\leq{b_{i}}(\xxi)\text{, }\forall{i=1,\dots,k}\text{\rm{ and }}\xxi\in{C_{i}}
        \end{cases}
    \end{equation}
    and the sequence of optimisation problems:\\ 
    For $n\in\bN$ we choose points $Z_{n}:=\{\mathbf{z}^{1,n},\dots,\mathbf{z}^{N_{n},n}\}$. For $i=1,\dots,k$ the sets $\Xin$ are defined as $\Xin:=Z_{n}\cap{C_{i}}=\{\mathbf{x}_{i}^{1,n},\dots,\mathbf{x}_{i}^{L_{i,n},n}\}$, i.e. $\xxiln$ are elements of $\Xin$ for $\ell=1,\dots,\Lin$. We also assume that for fixed $i\in{I}$ the fill distance
    \begin{equation}
        \label{eqnthmConvergence3}
        h_{\Xin,C_{i}}:=\sup_{\xxi\in{C_{i}}}\min_{\ell=1,\dots,\Lin}\|\xxi-\xxiln\|_{2}\text{ satisfies }\lim_{n\rightarrow\infty}h_{\Xin,C_{i}}=0.
    \end{equation}
    Furthermore, assume that there exists a $V_{0}\in{H}$ that satisfies the constraints of (\ref{eqnthmConvergence2}). For each fixed $n\in\bN$ let $v^{\varepsilon}_{n}$ satisfy the constraints of the minimisation problem for $v\in{H}$
    \begin{equation}
        \label{eqnthmConvergence4}
        \begin{cases}
            \text{\rm{minimise }}\|v\|_{H}\\
            \text{\rm{subject to }}D_{i}v(\xxiln)\leq{b_{i}}(\xxiln)\text{, }i=1,\dots,k\text{, }\ell=1,\dots,\Lin
        \end{cases}
    \end{equation}
    with $\|v_{n}^{\varepsilon}\|_{H}\leq\frac{\varepsilon}{n}+m_{n}$ for some $\varepsilon>0$ where
    \begin{equation}
        \label{eqnthmConvergence5}
        m_{n}:=\inf_{v}\{\|v\|_{H}:v\text{ satisfies the constraints of (\ref{eqnthmConvergence4})}\}.
    \end{equation}
    Then the optimisation problem (\ref{eqnthmConvergence2}) has a unique solution $v$ and the functions $v_{n}^{\varepsilon}$ converge strongly in $H$ to $v$ as $n\rightarrow\infty$.
\end{theorem}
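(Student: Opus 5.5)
The plan is the standard Hilbert-space argument of \cite{giesl2021minimization}: a bounded near-minimising sequence, a weak limit that satisfies the continuous constraints, and then a norm comparison that upgrades weak convergence to strong convergence.

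\textbf{Existence and uniqueness for (\ref{eqnthmConvergence2}).} The feasible set $F$ of (\ref{eqnthmConvergence2}) is an intersection of half-spaces $\{v:\lambda(v)\leq b_{i}(\mathbf{x}_{i})\}$, where $\lambda=\delta_{\mathbf{x}_{i}}\circ D_{i}$ with $\mathbf{x}_{i}\in C_{i}$.
\begin{itemize}
\item Each such $\lambda$ lies in $H^{*}$. By Lemma \ref{lemBddLipsRKHS} and the Sobolev embedding with $\alpha-1>d/2$, first derivatives of $v\in H^{\alpha}(\Omega)$ are continuous and bounded by $C\|v\|_{H}$. Since $\mathbf{f}_{i}$ is bounded on $C_{i}$, we get $|\lambda(v)|\leq C'\|v\|_{H}$ uniformly in $\mathbf{x}_{i}$.
\item Hence $F$ is closed and convex, and it is nonempty because $V_{0}\in F$.
\item The projection theorem then gives a unique minimal-norm element $v\in F$. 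This is also the uniqueness argument of Lemma \ref{lemMinProbOneSol}.
\end{itemize}

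\textbf{Bounds and a weak limit.} Let $F_{n}$ be the feasible set of (\ref{eqnthmConvergence4}). Since $F\subseteq F_{n}$, we have $m_{n}\leq\|v\|_{H}$. Therefore $\|v_{n}^{\varepsilon}\|_{H}\leq \varepsilon/n+\|v\|_{H}$, so the sequence is bounded. For any subsequence, extract a further subsequence converging weakly to some $v^{*}\in H$. Weak lower semicontinuity gives $\|v^{*}\|_{H}\leq\liminf\|v_{n}^{\varepsilon}\|_{H}\leq\|v\|_{H}$.

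\textbf{Main step: $v^{*}\in F$.} This is where the fill distance enters. Fix $i$ and $\mathbf{x}_{i}\in C_{i}$, and choose $\mathbf{x}_{i}^{\ell_{n},n}\in X_{i,n}$ with $\|\mathbf{x}_{i}-\mathbf{x}_{i}^{\ell_{n},n}\|_{2}\leq h_{X_{i,n},C_{i}}\to 0$. Write
\begin{equation*}
D_{i}v^{*}(\mathbf{x}_{i})-b_{i}(\mathbf{x}_{i}) = \bigl[D_{i}v^{*}(\mathbf{x}_{i})-D_{i}v_{n}^{\varepsilon}(\mathbf{x}_{i})\bigr]+\bigl[D_{i}v_{n}^{\varepsilon}(\mathbf{x}_{i})-D_{i}v_{n}^{\varepsilon}(\mathbf{x}_{i}^{\ell_{n},n})\bigr]+\bigl[D_{i}v_{n}^{\varepsilon}(\mathbf{x}_{i}^{\ell_{n},n})-b_{i}(\mathbf{x}_{i}^{\ell_{n},n})\bigr]+\bigl[b_{i}(\mathbf{x}_{i}^{\ell_{n},n})-b_{i}(\mathbf{x}_{i})\bigr].
\end{equation*}
The four brackets are handled as follows.
\begin{itemize}
\item The first bracket tends to $0$ by weak convergence, since $\delta_{\mathbf{x}_{i}}\circ D_{i}\in H^{*}$.
\item The third bracket is $\leq 0$ because $v_{n}^{\varepsilon}$ is feasible for (\ref{eqnthmConvergence4}).
\item The fourth bracket tends to $0$ by continuity of $b_{i}$.
\item The second bracket needs a uniform Hölder or Lipschitz bound on $D_{i}w$ over $C_{i}$ in terms of $\|w\|_{H}$. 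Because $\alpha>d/2+2$, the embedding $H^{\alpha}\hookrightarrow C^{1,s}$ (indeed $C^{2}$ after extension) gives a bound on $\nabla w$ and its modulus of continuity by $C\|w\|_{H}$. Combined with $\mathbf{f}_{i}$ being $C^{1}$ (as $\alpha-1>1$), hence Lipschitz near $C_{i}$, this yields $|D_{i}w(\mathbf{x})-D_{i}w(\mathbf{y})|\leq C\|w\|_{H}\|\mathbf{x}-\mathbf{y}\|_{2}$. I would state this as a preliminary estimate.
\end{itemize}
Letting $n\to\infty$ gives $D_{i}v^{*}(\mathbf{x}_{i})\leq b_{i}(\mathbf{x}_{i})$, so $v^{*}\in F$. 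I expect this Lipschitz estimate, uniform in $n$, to be the main technical obstacle; it is where $\alpha>d/2+2$ and the smoothness of $\mathbf{f}_{i}$ on a neighbourhood of $C_{i}$ are used.

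\textbf{Conclusion.} Since $v^{*}\in F$, minimality gives $\|v\|_{H}\leq\|v^{*}\|_{H}\leq\|v\|_{H}$, and uniqueness forces $v^{*}=v$. Moreover $\limsup\|v_{n}^{\varepsilon}\|_{H}\leq\|v\|_{H}\leq\liminf\|v_{n}^{\varepsilon}\|_{H}$, so the norms converge. Weak convergence plus convergence of norms in a Hilbert space gives strong convergence, via $\|v_{n}^{\varepsilon}-v\|^{2}=\|v_{n}^{\varepsilon}\|^{2}-2\langle v_{n}^{\varepsilon},v\rangle+\|v\|^{2}\to 0$. Because every subsequence has a further subsequence converging to the same limit $v$, the whole sequence converges strongly to $v$.
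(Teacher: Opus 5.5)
Your proof is correct. It shares the paper's skeleton: a uniform bound, a weak subsequential limit, feasibility of that limit via the fill distance and the same four-term splitting, and then convergence of norms upgrading weak to strong convergence. It differs in two places.

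First, you get existence and uniqueness of the solution $v$ of (\ref{eqnthmConvergence2}) at the outset, from the projection theorem applied to the nonempty closed convex feasible set. You then bound the sequence by $m_n\le\|v\|_H$ and identify every subsequential weak limit with $v$ directly. The paper instead bounds the sequence by $\|V_0\|_H$, \emph{defines} $v$ as the weak limit, and only in Step 4 shows that this limit is a minimiser and that minimisers are unique. Your ordering makes the final subsequence-of-subsequence argument immediate and makes Step 4 unnecessary.

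Second, and more substantively, you prove the uniform modulus of continuity of $D_i v_n^{\varepsilon}$ differently. The paper writes $D_i v_n^{\varepsilon}(\mathbf{x}_i)-D_i v_n^{\varepsilon}(\mathbf{y}_i)$ as an inner product against $k_{\alpha-1}(\cdot,\mathbf{x}_i)-k_{\alpha-1}(\cdot,\mathbf{y}_i)$ in $H^{\alpha-1}(\Omega)$ and uses $\|\nabla\Phi_{\alpha-1}\|_2\le M$, which gives a H\"older-$1/2$ bound. You instead use the extension operator, the embedding $H^{\alpha}(\bR^d)\hookrightarrow C^2$ (valid since $\alpha>d/2+2$), and the $C^1$ regularity of $\mathbf{f}_i$, which gives a Lipschitz bound.

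The kernel route stays inside the RKHS framework. However, it relies on $D_i$ mapping $H^{\alpha}(\Omega)$ boundedly into $H^{\alpha-1}(\Omega)$, which tacitly treats $\mathbf{f}_i$ as a smooth multiplier on all of $\Omega$; the hypotheses only give $\mathbf{f}_i$ near $C_i$. Your route needs only that local regularity. In fact you compare $\mathbf{x}_i$ only with collocation points converging to it, so continuity of $\mathbf{f}_i$ at $\mathbf{x}_i$, together with the uniform bound on the $C^2$ norms of the extensions of $v_n^{\varepsilon}$, already suffices. You therefore never need a global Lipschitz constant for $\mathbf{f}_i$ on $C_i$.
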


Before we prove this theorem we prove a lemma that shows how the minimisation problem (\ref{eqnthmConvergence4}) is related to our original problem (\ref{eqnMinProbi}).
\begin{lemma}
    \label{lemEquivOptimProb}
    For fixed $n\in\bN$, the optimisation problem (\ref{eqnthmConvergence4}) is the optimisation problem (\ref{eqnMinProbi}).
\end{lemma}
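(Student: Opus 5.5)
The plan is to show that, once we fix $n$ and identify the data correctly, the two problems have the same feasible set and the same objective. The proof is therefore essentially an unpacking of definitions.

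Fix $n\in\bN$ and set $Z:=Z_{n}$, $N:=N_{n}$. With this choice, the sets $X_{i}=Z\cap C_{i}$ from (\ref{eqnCollPtsi}) coincide with $\Xin$, so $\Li=\Lin$ and $\xxil=\xxiln$ for $\ell=1,\dots,\Lin$. We also take $b_{i}$ in (\ref{eqnValbi}) to be the same continuous functions as in Theorem \ref{thmConvergence}; then $\bil=b_{i}(\xxiln)$.

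Next I would check that each constraint of (\ref{eqnthmConvergence4}) is exactly a constraint of (\ref{eqnMinProbi}). By (\ref{eqnllil}),
\begin{equation*}
\llil(v)=\nabla v(\xxil)\cdot\mathbf{f}_{i}(\xxil)=D_{i}v(\xxiln),
\end{equation*}
so $D_{i}v(\xxiln)\le b_{i}(\xxiln)$ if and only if $\llil(v)\le\bil$, for all $i$ and $\ell$. Hence the feasible sets in $H$ coincide. The objective $\|v\|_{H}$ is the same in both problems, and so is the space $H=H^{\alpha}(\Omega)$ with the kernel norm from Lemma \ref{lemEquivNormsFourier}. The two problems are therefore identical, and in particular they share the infimum $m_{n}$ and the (at most one, by Lemma \ref{lemMinProbOneSol}) minimiser.

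The one point needing care is that the functionals $\llil$ must be well defined and belong to $H^{*}$, so that (\ref{eqnMinProbi}) really is a problem posed on $H$. The approach is to use $\alpha>d/2+2$. Via the extension operator from the proof of Lemma \ref{lemBddLipsRKHS}, the Sobolev embedding gives $H^{\alpha}(\Omega)\hookrightarrow C^{1}(\overline{\Omega})$ continuously, since $\alpha-1>d/2$. It follows that $v\mapsto\partial_{j}v(\mathbf{x})$ is a bounded linear functional. Since $\mathbf{f}_{i}(\xxil)$ is a fixed vector (the field $\mathbf{f}_{i}$ being defined and continuous on a neighbourhood of $C_{i}\ni\xxil$), the functional $\llil=\sum_{j}f_{i,j}(\xxil)\,\delta_{\xxil}\circ\partial_{j}$ is bounded. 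I expect this step to be the main obstacle, since the rest is just notation.
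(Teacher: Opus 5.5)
Your proposal is correct. Its overall structure matches the paper's proof: you identify $Z=Z_n$, $X_i=\Xin$, $\Li=\Lin$ and $\bil=b_i(\xxiln)$, observe that the constraints and the objective coincide, and check that the functionals lie in $H^{*}$.

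The difference is in that last step.
\begin{itemize}
\item The paper argues at the operator level. It claims $D_i$ maps $H^{\alpha}(\Omega)$ continuously into $H^{\alpha-1}(\Omega)$, and point evaluation is continuous on $H^{\alpha-1}(\Omega)$ because $\alpha-1>d/2$. Hence $\delta_{\xxiln}\circ D_i\in H^{*}$.
\item You instead use the embedding $H^{\alpha}(\Omega)\hookrightarrow C^{1}(\overline{\Omega})$. You write $\llil$ as a fixed linear combination of the bounded functionals $\delta_{\xxil}\circ\partial_j$, with coefficients the components of $\mathbf{f}_{i}(\xxil)$.
\end{itemize}

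Your route is more economical for this lemma. It uses nothing about $\mathbf{f}_i$ except its value at the collocation point. The paper's claim that $D_i$ is bounded from $H^{\alpha}(\Omega)$ to $H^{\alpha-1}(\Omega)$ needs more, and the paper does not justify it. It requires $\mathbf{f}_i$ to act as a bounded multiplier on $H^{\alpha-1}(\Omega)$, which uses its $C^{\alpha-1}$ regularity. It also requires extending $\mathbf{f}_i$ from a neighbourhood of $C_i$ to all of $\Omega$.

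The paper's operator-level statement is not wasted, though. The bound $\|D_iv\|_{H^{\alpha-1}(\Omega)}\le c_0\|v\|_{H^{\alpha}(\Omega)}$, combined with the kernel $k_{\alpha-1}$, is what drives the estimate (\ref{eqnthmprfConvergence9}) in Step 2 of the proof of Theorem \ref{thmConvergence}. That estimate is uniform in $n$, and your pointwise boundedness argument does not by itself supply it.
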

\begin{proof}
    We first start by seeing that as $\alpha>d/2$ and because of $\Omega$ having a Lipschitz boundary, $H:=H^{\alpha}(\Omega)$ is indeed a RKHS, as we have seen in Lemma \ref{lemBddLipsRKHS}. Now we can use Lemma \ref{lemEquivNormsFourier} and choose an inner product on $H^{\alpha}(\Omega)$ and a reproducing kernel $k_{\alpha}$ given by a positive definite function $\Phi_{\alpha}:\bR^{d}\rightarrow\bR$ as $k_{\alpha}(\mathbf{x},\mathbf{y})=\Phi_{\alpha}(\mathbf{x}-\mathbf{y})$, $\mathbf{x},\mathbf{y}\in\Omega$. The norm induced by this inner product is equivalent to the standard norm on $H^{\alpha}(\Omega)$, and we use this inner product and induced norm as the inner product and norm on $H^{\alpha}(\Omega)$, denoting $\langle\cdot,\cdot\rangle_{H}=\langle\cdot,\cdot\rangle_{H^{\alpha}(\Omega)}=\langle\cdot,\cdot\rangle_{k_{\alpha}}$. As $D_{i}$ maps $H^{\alpha}(\Omega)$ continuously to $H^{\alpha-1}(\Omega)$ and as $\alpha>d/2+1$ implies $H^{\alpha-1}(\Omega)\subseteq{C}(\Omega)\cap{L}_{\infty}(\Omega)$ by the Sobolev embedding theorem \cite{adams2003sobolev}, the functionals $\llil=\delta_{\xxiln}\circ{D_{i}}$ belong to $H^{*}$ as they are bounded linear operators. Hence, for a fixed $n\in\bN$, problem (\ref{eqnthmConvergence4}) is indeed (\ref{eqnMinProbi}) with these $\llil$, $\biln=b_{i}(\xxiln)$, $i=1,\dots,k$ and $\ell=1,\dots,\Lin$ where $\Li=\Lin$.
\end{proof}
Now we provide the proof of the convergence theorem.
\begin{proof}[Proof of Theorem \ref{thmConvergence}]
    Fix $\varepsilon>0$. We show in the following steps that the sequence $(v_{n}^{\varepsilon})_{n\in\bN}$ of solutions of (\ref{eqnthmConvergence4}) converges strongly to an element $v\in{H}$ which is the unique solution of (\ref{eqnthmConvergence2}). Due to the result of Lemma \ref{lemEquivOptimProb}, this implies that if the fill distance (\ref{eqnthmConvergence3}) decreases, the solution of the minimisation problem (\ref{eqnMinProbi}) converges strongly to a solution of the system of partial differential inequalities (\ref{eqnDiV}).\par
    The proof is structured as follows: in Step 1 we show that a subsequence of $(v_{n}^{\varepsilon})_{n\in\bN}$ weakly converges to a function $v\in{H}$, in Step 2 we show that $v$ satisfies the constraints of (\ref{eqnthmConvergence2}), in Step 3 using both previous steps we can now show that the subsequence strongly converges to $v$, and finally in Step 4 we show that $v$ is a unique minimiser. We then finish the proof by showing that our original sequence must converge strongly to $v$. We use the notation $\lliln:=\delta_{\xxiln}\circ{D}_{i}$ and $\biln=b_{i}(\xxiln)$.\medskip\\
    \textbf{Step 1:} We have a $V_{0}\in{H}^{\alpha}(\Omega)$ that satisfies the constraints of (\ref{eqnthmConvergence2}), which means that 
    \begin{equation}
        \label{eqnthmprfConvergence1}
        D_{i}V_{0}(\xxi)\leq{b_{i}}(\xxi)\text{, }\forall{i=1,\dots,k}\text{ and }\xxi\in{C_{i}}.\nonumber
    \end{equation}
    This means that
    \begin{equation}
        \label{eqnthmprfConvergence2}
        \lliln(V_{0})=D_{i}V_{0}(\xxiln)\leq{b_{i}(\xxiln)}=\biln.\nonumber
    \end{equation}
    Thus $V_{0}$ satisfies the constraints of (\ref{eqnthmConvergence4}), which means by assuming that there is a feasible solution to (\ref{eqnthmConvergence2}) we also have the assumption that there is a feasible solution to (\ref{eqnthmConvergence4}) for each $n\in\bN$. Therefore, using the definition of $m_{n}$ in (\ref{eqnthmConvergence5}), there exists $v_{n}^{\varepsilon}$ that satisfies the constraints of (\ref{eqnthmConvergence4}) with $\|v_{n}^{\varepsilon}\|_{H}\leq\frac{\varepsilon}{n}+m_{n}$.\par
    Hence we have
    \begin{equation}
        \label{eqnthmprfConvergence3}
        \|v_{n}^{\varepsilon}\|_{H}\leq\|V_{0}\|_{H}+\frac{\varepsilon}{n}.
    \end{equation}
    In particular, 
    \begin{equation}
        \label{eqnthmprfConvergence3.1}
        \|v_{n}^{\varepsilon}\|_{H}\leq\|V_{0}\|_{H}+\varepsilon,
    \end{equation} 
    thus $\|v_{n}^{\varepsilon}\|_{H}$ is bounded for all $n\in\bN$. As bounded sets in Hilbert spaces are relatively compact in the weak topology, there is a subsequence of $(v_{n}^{\varepsilon})_{n\in\bN}$ which we also denote by $(v_{n}^{\varepsilon})_{n\in\bN}$, that weakly converges to a function $v\in{H}$. From
    \begin{equation}
        \label{eqnthmprfConvergence4}
        \|v\|_{H}^{2}=\langle{v,v-v_{n}^{\varepsilon}}\rangle_{H}+\langle{v,v_{n}^{\varepsilon}}\rangle_{H}\leq\langle{v,v-v_{n}^{\varepsilon}}\rangle_{H}+\|v\|_{H}\|v_{n}^{\varepsilon}\|_{H}\nonumber
    \end{equation}
    it follows using (\ref{eqnthmprfConvergence3}) that
    \begin{equation}
        \label{eqnthmprfConvergence5}
        \|v\|_{H}\leq\liminf_{n\rightarrow\infty}\|v_{n}^{\varepsilon}\|_{H}\leq\|V_{0}\|_{H}:=C_{0}.
    \end{equation}\medskip\\
    \textbf{Step 2:} Now we use the kernel representation to show that $D_{i}v(\xxi)\leq{b_{i}}(\xxi)$ for all $i=1,\dots,k$ and $\xxi\in{C_{i}}$, where $v$ is the weak limit of the subsequence $(v_{n}^{\varepsilon})_{n\in\bN}$, i.e. that $v$ satisfies the constraints of (\ref{eqnthmConvergence2}).\par 
    We can define $\lli:=\delta_{\xxi}\circ{D_{i}}\in{H}^{*}$ where $(\lli)^{\mathbf{y}}k_{\alpha}(\cdot,\mathbf{y})$ is the Riesz representative of $\lli$. This gives that
    \begin{equation}
        \label{eqnthmprfConvergence6}
        |D_{i}v(\xxi)-D_{i}v_{n}^{\varepsilon}(\xxi)|=|\lli(v-v_{n}^{\varepsilon})|=\langle{v-v_{n}^{\varepsilon}},(\lli)^{\mathbf{y}}k_{\alpha}(\cdot,\mathbf{y})\rangle_{H}\rightarrow{0}
    \end{equation}
    as $v_{n}^{\varepsilon}$ converges weakly to $v$.\par
    Next we can see from Lemma \ref{lemBddLipsRKHS} that as $\alpha-1>d/2$ and $\Omega$ has Lipschitz boundary, $H^{\alpha-1}(\Omega)$ is also a RKHS. We can again use Lemma \ref{lemEquivNormsFourier} to give us a reproducing kernel $k_{\alpha-1}(\mathbf{x},\mathbf{y})=\Phi_{\alpha-1}(\mathbf{x}-\mathbf{y})$ where $\Phi_{\alpha-1}:\bR^{d}\rightarrow\bR$. Since $\alpha-1>d/2+1$ we can also use the Sobolev embedding theorem \cite{adams2003sobolev} to give $H^{\alpha-1}(\bR^{d})\subseteq{W_{\infty}^{1}}(\bR^{d})\cap{C^{1}}(\bR^{d})$ which implies there exists an $M>0$ such that 
    \begin{equation}
        \label{eqnthmprfConvergence7}
        \|\nabla\Phi_{\alpha-1}(\xi)\|_{2}\leq{M}
    \end{equation}
    for all $\xi\in\bR^{d}$. Indeed, using that $\Phi_{\alpha-1}\in{H^{\alpha-1}(\bR^{d})}$, the Sobolev embedding theorem gives us the inequality $\|\Phi_{\alpha-1}\|_{W_{\infty}^{1}(\bR^{d})\cap{C^{1}(\bR^{d})}}\leq{C}\|\Phi_{\alpha-1}\|_{H^{\alpha-1}(\bR^{d})}$. We then find that
    \begin{equation}
        \label{eqnthmprfConvergence8}
        \|\Phi_{\alpha-1}\|_{W_{\infty}^{1}(\bR^{d})\cap{C^{1}(\bR^{d})}}=\max_{0\leq|\beta|\leq{1}}\|D^{\beta}\Phi_{\alpha-1}(\xi)\|_{\infty}\geq\|\nabla\Phi_{\alpha-1}(\xi)\|_{\infty}\geq\frac{1}{\sqrt{d}}\|\nabla\Phi_{\alpha-1}(\xi)\|_{2}\nonumber
    \end{equation}
    which gives us the original inequality for all $\xi\in\bR^{d}$.\par
    Now we can take $\xxi,\yyi\in{C_{i}}$ and a $\xi\in\bR^{d}$ on the line between $0$ and $\xxi-\yyi$ such that, using that $D_{i}:H^{\alpha}(\Omega)\rightarrow{H^{\alpha-1}}(\Omega)$ is a bounded operator with constant $c_{0}$,
    \begin{eqnarray}
        |D_{i}v_{n}^{\varepsilon}(\xxi)-D_{i}v_{n}^{\varepsilon}(\yyi)| & = & |\langle{D_{i}}v_{n}^{\varepsilon},k_{\alpha-1}(\cdot,\xxi)-k_{\alpha-1}(\cdot,\yyi)\rangle_{H^{\alpha-1}(\Omega)}|\nonumber\\
        & \mathrel{\overset{\makebox[0pt]{\mbox{\normalfont\tiny\sffamily (C-S)}}}{\leq}} & \|D_{i}v_{n}^{\varepsilon}\|_{H^{\alpha-1}(\Omega)}\|{k_{\alpha-1}(\cdot,\xxi)-k_{\alpha-1}(\cdot,\yyi)}\|_{H^{\alpha-1}(\Omega)}\nonumber\\
        & = & \|D_{i}v_{n}^{\varepsilon}\|_{H^{\alpha-1}(\Omega)}(k_{\alpha-1}(\xxi,\xxi)+k_{\alpha-1}(\yyi,\yyi)\nonumber\\
        & & \phantom{ccccccccccccccccccccccccccccc}-2k_{\alpha-1}(\xxi,\yyi))^{1/2}\nonumber\\
        & \leq & \sqrt{2}c_{0}\|v_{n}^{\varepsilon}\|_{H^{\alpha}(\Omega)}(\Phi_{\alpha-1}(0)-\Phi_{\alpha-1}(\xxi-\yyi))^{1/2}\nonumber\\
        & \mathrel{\overset{\makebox[0pt]{\mbox{\normalfont\tiny\sffamily (\ref{eqnthmprfConvergence3.1})}}}{\leq}} & \sqrt{2}c_{0}(C_{0}+\varepsilon)(\Phi_{\alpha-1}(0)-\Phi_{\alpha-1}(\xxi-\yyi))^{1/2}\nonumber\\
        & \mathrel{\overset{\makebox[0pt]{\mbox{\normalfont\tiny\sffamily (MVT)}}}{\leq}} & \sqrt{2}c_{0}(C_{0}+\varepsilon)\|\nabla\Phi_{\alpha-1}(\xi)\|_{2}^{1/2}\|\xxi-\yyi\|_{2}^{1/2}\nonumber\\
        \label{eqnthmprfConvergence9}
        & \mathrel{\overset{\makebox[0pt]{\mbox{\normalfont\tiny\sffamily (\ref{eqnthmprfConvergence7})}}}{\leq}} & C_{1}\|\xxi-\yyi\|_{2}^{1/2}
    \end{eqnarray}
    for all $n\in\bN$ where $C_{1}=\sqrt{2}c_{0}(C_{0}+\varepsilon)M^{1/2}$. The second equality is deduced using $\|\cdot\|_{H^{\alpha-1}(\Omega)}=\sqrt{\langle{\cdot,\cdot}\rangle_{H^{\alpha-1}(\Omega)}}$, the properties of inner products and the properties of $k_{\alpha-1}$ as a reproducing kernel as described in Definition \ref{defnRKHS}.\par
    We have set out to show that $D_{i}v(\xxi)\leq{b_{i}}(\xxi)$ for all $i=1,\dots,k$ and $\xxi\in{C_{i}}$, which is the same as showing that for all $\mu>0$ we have $D_{i}v(\xxi)-{b_{i}}(\xxi)<\mu$.\par
    We fix $\mu>0$. By (\ref{eqnthmprfConvergence6}) we know that there exists ${N_{1}}\in\bN$ such that for all ${n}\geq{N_{1}}$
    \begin{equation}
        \label{eqnthmprfConvergence10}
        |D_{i}v_{n}^{\varepsilon}(\xxi)-D_{i}v(\xxi)|<\frac{\mu}{3}.
    \end{equation}
    Since $b_{i}$ is continuous at $\xxi$ there exists a $\delta>0$ such that for all $\yyi\in{C_{i}}$ with
    \begin{equation}
        \label{eqnthmprfConvergence11}
        |\xxi-\yyi|<\delta\Rightarrow|b_{i}(\xxi)-b_{i}(\yyi)|<\frac{\mu}{3}.
    \end{equation}
    We also know by (\ref{eqnthmConvergence3}) that there exists $N_{2}\in\bN$ such that for all $n\geq{N_{2}}$ there exists a $\xxiln\in\Xin$ with
    \begin{equation}
        \label{eqnthmprfConvergence12}
        \|\xxi-\xxiln\|_{2}\leq\min\left(\frac{\mu^{2}}{9C_{1}^{2}},\delta\right).
    \end{equation}
    Therefore, for $n\geq\max(N_{1},N_{2})$ and using the statements (\ref{eqnthmprfConvergence10}), (\ref{eqnthmprfConvergence9}), (\ref{eqnthmprfConvergence12}) and (\ref{eqnthmprfConvergence11}) as well as $D_{i}v_{n}^{\varepsilon}(\xxiln)\leq{b_{i}(\xxiln)}$ we have that
    \begin{eqnarray}
        D_{i}v(\xxi)-{b_{i}}(\xxi) & = & (D_{i}v(\xxi)-D_{i}v_{n}^{\varepsilon}(\xxi))+(D_{i}v_{n}^{\varepsilon}(\xxi)-D_{i}v_{n}^{\varepsilon}(\xxiln))\nonumber\\
        & & +(D_{i}v_{n}^{\varepsilon}(\xxiln)-b_{i}(\xxiln))+(b_{i}(\xxiln)-b_{i}(\xxi))\nonumber\\
        & < & \frac{\mu}{3}+C_{1}\frac{\mu}{3C_{1}}+0+\frac{\mu}{3}=\mu.\nonumber
    \end{eqnarray}\medskip\\
    \textbf{Step 3:} Due to our findings in Step 2 that $v$ satisfies the constraints of the continuous problem (\ref{eqnthmConvergence2}), it satisfies them in particular at every collocation point $\xxiln$. Thus $v$ is feasible for the discretised problem (\ref{eqnthmConvergence4}), for every $n\in\bN$. Therefore, we have $m_{n}\leq\|v\|_{H}$ and thus
    \begin{equation}
        \label{eqnthmprfConvergence12.1}
        \|v_{n}^{\varepsilon}\|_{H}-\frac{\varepsilon}{n}\leq\|v\|_{H}
    \end{equation}
    for all $n\in\bN$. This implies that $\limsup_{n\rightarrow\infty}\|v_{n}^{\varepsilon}\|_{H}\leq\|v\|_{H}$ holds. Now using (\ref{eqnthmprfConvergence5}) it follows that
    \begin{equation}
        \label{eqnthmprfConvergence13}
        \lim_{n\rightarrow\infty}\|v_{n}^{\varepsilon}\|_{H}=\|v\|_{H}\nonumber
    \end{equation}
    and as
    \begin{equation}
        \label{eqnthmprfConvergence14}
        \|v-v_{n}^{\varepsilon}\|_{H}^{2}=\|v_{n}^{\varepsilon}\|_{H}^{2}-\|v\|_{H}^{2}+2\langle{v,v-v_{n}^{\varepsilon}}\rangle_{H}\nonumber
    \end{equation}
    it follows that $v_{n}^{\varepsilon}$ converges strongly to $v$.
    \medskip\\
    \textbf{Step 4:} It remains to show that $v$ is a unique minimiser. Firstly we show that $v$ is a minimiser. To do this we take a $V\in{H}$ that satisfies the constraints of (\ref{eqnthmConvergence2}), which implies that for every $n$, $V$ also satisfies the constraints of (\ref{eqnthmConvergence4}). Therefore we have that $\|v_{n}^{\varepsilon}\|_{H}-\varepsilon/n\leq\|V\|_{H}$ and, if we take $n\rightarrow\infty$, we have that $\|v_{n}^{\varepsilon}\|_{H}\rightarrow\|v\|_{H}$ due to the strong convergence, so $\|v\|_{H}\leq\|V\|_{H}$. This shows $v$ is a minimiser.\par
    To show it is a unique minimiser we assume $s\in{H}$ is a minimiser and $v\in{H}$ satisfies the constraints of (\ref{eqnthmConvergence2}) and show that
    \begin{equation}
        \label{eqnthmprfConvergence15}
        \langle{s,v-s}\rangle_{H}\geq{0}
    \end{equation}
    Indeed we assume $\langle{s,v-s}\rangle_{H}<0$. If we let $\alpha\in[0,1]$ we can see for $t=\alpha{v}+(1-\alpha)s$ (which satisfies the constraints) we have
    \begin{equation}
        \label{eqnthmprfConvergence16}
        \|t\|_{H}^{2}=\|s\|_{H}^{2}+2\alpha\langle{s,v-s}\rangle_{H}+\alpha^{2}\|v-s\|_{H}^{2}<\|s\|_{H}^{2}\nonumber
    \end{equation}
    for a suitable choice of $\alpha>0$. This contradicts $s$ being a minimiser.\par
    Now let $s_{1},s_{2}\in{H}$ be minimisers. By (\ref{eqnthmprfConvergence15}) that means we have
    \begin{equation}
        \label{eqnthmprfConvergence17}
        \langle{s_{1},s_{2}-s_{1}}\rangle_{H}\geq{0}\text{ and }\langle{s_{2},s_{1}-s_{2}}\rangle_{H}\geq{0}.\nonumber
    \end{equation}
    This implies
    \begin{equation}
        \label{eqnthmprfConvergence18}
        0\leq\|s_{1}-s_{2}\|_{H}^{2}=-\langle{s_{1},s_{2}-s_{1}}\rangle_{H}-\langle{s_{2},s_{1}-s_{2}}\rangle_{H}\leq{0}\nonumber
    \end{equation}
    so $s_{1}=s_{2}$.\par
    Hence every weakly convergent subsequence of the original sequence $(v_{n}^{\varepsilon})_{n\in\bN}$ converges strongly to the unique minimiser $v$ of (\ref{eqnthmConvergence2}). From this it follows that the original sequence $(v_{n}^{\varepsilon})_{n\in\bN}$ of functions that satisfy the constraints of (\ref{eqnthmConvergence4}) converges strongly to $v$. 
\end{proof}
Two assumptions are made for this convergence result to hold. One assumption is on the fill distance of the collocation points converging to zero, and sufficient conditions for this to hold are given in the following section. The other assumption is that there exists a function $V_{0}$ that satisfies the constraints of the minimisation problem (\ref{eqnthmConvergence2}). A sufficient condition for this is the existence of a Lyapunov function for the switched system (\ref{eqnfis}). We will explore sufficient conditions for the existence of a Lyapunov function for this system in future work.

\subsection{Fill distance assumption}
\label{secFillDistance}

In this section, we consider the assumption made in (\ref{eqnthmConvergence3}) that states that the fill distance tends to zero as the number of collocation points tends to infinity. One way of choosing the collocation points is setting $Z_{n}=2^{-n}\bZ^{d}\cap\Omega$ with $n\in\bN$. Then for all $i\in{I}$ we have a subset of the collocation points $\Xin=2^{-n}\bZ^{d}\cap{C_{i}}$. If we know that the sets $C_{i}$ have Lipschitz boundary and we choose the collocation points in this way, the fill distance converges as we want for Theorem \ref{thmConvergence}.\par
Any set that is bounded with Lipschitz boundary satisfies an interior cone condition, see Section 4.7 on page 67 of \cite{adams2003sobolev}. For every point $\mathbf{x}\in\Omega$ of a set $\Omega\subseteq{\bR^{d}}$ satisfying an interior cone condition there is a direction (given by unit vector $\xi(\mathbf{x})$) such that a cone with angle $\alpha$ and radius $r$ is contained within $\Omega$. This has been illustrated in Figure \ref{figSetConeCond}, and a formal definition is given below, see \cite{wendland2004scattered}.

\begin{defn}{\rm (Interior Cone Condition)}
    \label{defnConeCond}
    A set $\Omega\subseteq\bR^{d}$ is said to satisfy an interior cone condition if there exists an angle $\alpha\in(0,\pi/2)$ and a radius $r>0$ such that for every $\mathbf{x}\in\Omega$ a unit vector $\xi(\mathbf{x})$ exists such that the cone
    \begin{equation}
        \label{eqndefnConeCond1}
        C(\mathbf{x},\xi(\mathbf{x}),\alpha,r):=\{\mathbf{x}+\lambda\mathbf{y}:\mathbf{y}\in\bR^{d}\text{, }\|\mathbf{y}\|_{2}=1\text{, }\mathbf{y}^{T}\xi(\mathbf{x})\geq\cos(\alpha)\text{, }\lambda\in[0,r]\}
    \end{equation}
    is contained in $\Omega$.
\end{defn}

\begin{figure}[H]
    \centering
\tikzset{every picture/.style={line width=0.75pt}} 

\begin{tikzpicture}[x=0.75pt,y=0.75pt,yscale=-1,xscale=1]

\draw   (279.95,119.34) .. controls (262.03,150.08) and (229.62,173.25) .. (189.99,179.55) -- (172,66.5) -- cycle ;
\draw   (358,39.5) .. controls (406,44.5) and (511,25.5) .. (486,89.5) .. controls (461,153.5) and (384,112.5) .. (365,178.5) .. controls (346,244.5) and (197,226.5) .. (180,224.5) .. controls (163,222.5) and (61,171.5) .. (62,136.5) .. controls (63,101.5) and (122,54.5) .. (157,41.5) .. controls (192,28.5) and (310,34.5) .. (358,39.5) -- cycle ;
\draw  [dash pattern={on 4.5pt off 4.5pt}]  (172,66.5) -- (241,157.5) ;
\draw  [draw opacity=0] (209.63,85.33) .. controls (206.57,91.37) and (201.63,95.98) .. (195.61,98.96) -- (172,66.5) -- cycle ; \draw   (209.63,85.33) .. controls (206.57,91.37) and (201.63,95.98) .. (195.61,98.96) ;  
\draw    (100,114) ;
\draw    (172,66.5) ;
\draw [shift={(172,66.5)}, rotate = 0] [color={rgb, 255:red, 0; green, 0; blue, 0 }  ][fill={rgb, 255:red, 0; green, 0; blue, 0 }  ][line width=0.75]      (0, 0) circle [x radius= 3.35, y radius= 3.35]   ;

\draw (453,58.4) node [anchor=north west][inner sep=0.75pt]    {$\Omega $};
\draw (205,88.4) node [anchor=north west][inner sep=0.75pt]    {$\alpha $};
\draw (168,123.4) node [anchor=north west][inner sep=0.75pt]    {$r$};
\draw (154,56.4) node [anchor=north west][inner sep=0.75pt]    {$\mathbf{x}$};

\end{tikzpicture}

    \caption{A set $\Omega\subseteq\bR^{2}$ satisfying the cone condition with the cone $C(\mathbf{x},\xi(\mathbf{x}),\alpha,r)$ pictured. Every point in $\Omega$ has a cone of the same angle $\alpha$ and radius $r$ contained fully within the set.}
    \label{figSetConeCond}
\end{figure}


See the following lemma, also from \cite{wendland2004scattered}, showing that there is a ball contained within this cone.

\begin{lemma}
    \label{lemConeCondBall}
    Suppose that $C=C(\mathbf{x},\xi,\alpha,r)$ is a cone defined as in (\ref{eqndefnConeCond1}). Then for every $h\leq{r/(1+\sin(\alpha))}$ the closed ball $B(\mathbf{y},h\sin(\alpha))$ with centre $\mathbf{y}=\mathbf{x}+h\xi$ and radius $h\sin(\alpha)$ is contained in $C(\mathbf{x},\xi,\alpha,r)$. In particular, if $\mathbf{z}$ is a point in this ball, then the whole line segment $\mathbf{x}+t(\mathbf{z}-\mathbf{x})/\|\mathbf{z}-\mathbf{x}\|_{2}$, $t\in[0,r]$, is contained in the cone.
\end{lemma}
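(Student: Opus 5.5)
To prove Lemma \ref{lemConeCondBall}, I would show directly that every point of the ball $B(\mathbf{y},h\sin(\alpha))$ has the form $\mathbf{x}+\lambda\mathbf{u}$ with $\|\mathbf{u}\|_{2}=1$, $\mathbf{u}^{T}\xi\geq\cos(\alpha)$ and $\lambda\in[0,r]$. Take $\mathbf{z}\in B(\mathbf{y},h\sin(\alpha))$ and write $\mathbf{z}=\mathbf{x}+h\xi+\mathbf{w}$ with $\|\mathbf{w}\|_{2}\leq h\sin(\alpha)$. Since $\sin(\alpha)<1$, we get $\mathbf{z}\neq\mathbf{x}$, so I can set $\lambda:=\|\mathbf{z}-\mathbf{x}\|_{2}$ and $\mathbf{u}:=(\mathbf{z}-\mathbf{x})/\lambda$.

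The radial bound comes from the triangle inequality: $\lambda\leq h+h\sin(\alpha)=h(1+\sin(\alpha))\leq r$ by the assumption on $h$.

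The angular bound is the main step. I need $(h\xi+\mathbf{w})^{T}\xi\geq\cos(\alpha)\|h\xi+\mathbf{w}\|_{2}$. The cleanest argument is geometric. The angle $\beta$ between $\xi$ and $h\xi+\mathbf{w}$ satisfies $\sin\beta\leq\|\mathbf{w}\|_{2}/h\leq\sin(\alpha)$, because the distance from the point $h\xi$ to the ray spanned by $h\xi+\mathbf{w}$ equals $h\sin\beta$ and is at most $\|\mathbf{w}\|_{2}$. The angle is acute because $(h\xi+\mathbf{w})^{T}\xi\geq h-\|\mathbf{w}\|_{2}>0$. Since $\alpha<\pi/2$ and $\sin$ is increasing on $[0,\pi/2]$, this gives $\beta\leq\alpha$, i.e.\ $\cos\beta\geq\cos(\alpha)$.

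To avoid relying on angles, I would verify this algebraically instead. Decompose $\mathbf{w}=a\xi+\mathbf{w}_{\perp}$ and set $p=h+a>0$ and $q=\|\mathbf{w}_{\perp}\|_{2}$. The condition to prove is $p^{2}\sin^{2}(\alpha)\geq q^{2}\cos^{2}(\alpha)$, which is equivalent to $q\leq p\tan(\alpha)$. From $a^{2}+q^{2}\leq h^{2}\sin^{2}(\alpha)$, this reduces to checking that $q^{2}\leq(h+a)^{2}\tan^{2}(\alpha)$ whenever $a^{2}+q^{2}\leq h^{2}\sin^{2}(\alpha)$. The worst case is the tangent line from $\mathbf{x}$ to the ball. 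I will check it by maximising $q/(h+a)$ over the disc, which gives the value $\tan(\alpha)$ exactly. This computation is the only real obstacle.

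Finally, the statement about the segment follows from what has been shown. The unit direction $\mathbf{u}=(\mathbf{z}-\mathbf{x})/\|\mathbf{z}-\mathbf{x}\|_{2}$ satisfies $\mathbf{u}^{T}\xi\geq\cos(\alpha)$, so by definition (\ref{eqndefnConeCond1}) every point $\mathbf{x}+t\mathbf{u}$ with $t\in[0,r]$ lies in $C(\mathbf{x},\xi,\alpha,r)$.
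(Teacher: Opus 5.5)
The paper gives no proof of this lemma; it quotes it from \cite{wendland2004scattered}. Your argument is a correct self-contained proof.

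The radial bound via the triangle inequality is exactly right. Your geometric angular argument is also complete on its own: once $(h\xi+\mathbf{w})^T\xi\ge h(1-\sin\alpha)>0$ makes $\beta$ acute, the distance $h\sin\beta$ from $h\xi$ to the ray through $h\xi+\mathbf{w}$ is at most $\|\mathbf{w}\|_2\le h\sin\alpha$.

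The algebraic check you flag as the ``only real obstacle'' is in fact immediate. Substituting $q^2\le h^2\sin^2\alpha-a^2$ gives
\[
(h+a)^2\sin^2\alpha-(h^2\sin^2\alpha-a^2)\cos^2\alpha=(a+h\sin^2\alpha)^2\ge 0,
\]
with equality exactly at the tangency point $a=-h\sin^2\alpha$, which confirms your worst-case picture.

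A shorter route avoids both the decomposition of $\mathbf{w}$ and the angle argument. Put $\mathbf{v}=\mathbf{z}-\mathbf{x}$; the ball condition $\|\mathbf{v}-h\xi\|_2^2\le h^2\sin^2\alpha$ expands to $\|\mathbf{v}\|_2^2+h^2\cos^2\alpha\le 2h\,\mathbf{v}^T\xi$. Applying AM--GM to the left-hand side, $\|\mathbf{v}\|_2^2+h^2\cos^2\alpha\ge 2h\cos\alpha\,\|\mathbf{v}\|_2$, then gives $\mathbf{v}^T\xi\ge\cos\alpha\,\|\mathbf{v}\|_2$ directly.

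One small point: your claim $\mathbf{z}\neq\mathbf{x}$ needs $h>0$. For $h=0$ the ball degenerates to $\{\mathbf{x}\}$, which lies in the cone trivially, but the normalisation in the segment statement is undefined. So $h>0$ should be taken as implicit in the lemma.
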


We now prove the following lemma that we go on to use in the proof of Corollary \ref{corFillDistance}. There we detail sufficient conditions for the assumption (\ref{eqnthmConvergence3}) to hold.

\begin{lemma}
    \label{lemConeCondBound}
    Set the collocation points to be $Z_{n}=2^{-n}\bZ^{d}\cap\Omega$ and take subsets $\Xin=2^{-n}\bZ^{d}\cap{C_{i}}$. Fix $i\in{I}$ and assume $C_{i}$ satisfies an interior cone condition. Then for all $B>{0}$ there exists ${N}\in\bN_{0}$ such that for all $n>{N}$ and $\xxi\in{C_{i}}$ there exists a collocation point $\xxiln\in\Xin$ such that $\|\xxi-\xxiln\|_{2}\leq{B}$.
\end{lemma}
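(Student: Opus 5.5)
The plan is to use the interior cone condition to place a small closed ball inside $C_{i}$ near any given point $\xxi$, and then to observe that a sufficiently fine grid $2^{-n}\bZ^{d}$ must contain a point of that ball. Such a grid point automatically lies in $C_{i}$, hence in $\Xin$, and it is close to $\xxi$.

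Fix $B>0$ and let $\alpha\in(0,\pi/2)$ and $r>0$ be the angle and radius from Definition \ref{defnConeCond} for $C_{i}$. Choose
\[
h:=\min\left(\frac{r}{1+\sin(\alpha)},\frac{B}{2}\right).
\]
For any $\xxi\in C_{i}$, the cone $C(\xxi,\xi(\xxi),\alpha,r)$ lies in $C_{i}$. Lemma \ref{lemConeCondBall} then gives that the closed ball $B(\mathbf{y},h\sin(\alpha))$ with $\mathbf{y}=\xxi+h\xi(\xxi)$ lies in that cone, and therefore in $C_{i}$. Note that $h$, and hence the radius $h\sin(\alpha)$, does not depend on $\xxi$; this uniformity is what makes $N$ independent of $\xxi$.

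Next I would use the elementary fact that every closed ball of radius $\rho$ in $\bR^{d}$ contains a point of $2^{-n}\bZ^{d}$ as soon as $\sqrt{d}\,2^{-n-1}\leq\rho$. To see this, round each coordinate of the centre to the nearest multiple of $2^{-n}$. Each coordinate then moves by at most $2^{-n-1}$, so the Euclidean distance moved is at most $\sqrt{d}\,2^{-n-1}$. Accordingly, I choose $N\in\bN_{0}$ with $\sqrt{d}\,2^{-N-1}\leq h\sin(\alpha)$. For every $n>N$ the ball $B(\mathbf{y},h\sin(\alpha))$ then contains a grid point $\mathbf{z}\in2^{-n}\bZ^{d}$. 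Since the ball lies in $C_{i}\subseteq\Omega$, we have $\mathbf{z}\in2^{-n}\bZ^{d}\cap C_{i}=\Xin$, so $\mathbf{z}=\xxiln$ for some $\ell$.

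Finally, the triangle inequality gives
\[
\|\xxi-\xxiln\|_{2}\leq\|\xxi-\mathbf{y}\|_{2}+\|\mathbf{y}-\xxiln\|_{2}\leq h+h\sin(\alpha)<2h\leq B.
\]

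The step needing most care is verifying that the ball lies in $C_{i}$ uniformly in $\xxi$, including when $\xxi$ is on the boundary of $C_{i}$. This follows directly from the cone condition holding at every point of $C_{i}$ with the same $\alpha$ and $r$, combined with Lemma \ref{lemConeCondBall}. The rest of the argument is routine.
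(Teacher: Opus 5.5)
Your proposal is correct and follows the same route as the paper. The paper also uses the cone condition with Lemma \ref{lemConeCondBall} to get a ball of radius $h\sin(\alpha)$ inside $C_{i}$ that is uniform in $\mathbf{x}_{i}$, takes $n$ large enough that the grid $2^{-n}\mathbb{Z}^{d}$ meets that ball, and finishes with the triangle inequality. Your coordinate-rounding argument and explicit integer choice of $N$ are a slightly cleaner version of the paper's hypercube-diagonal argument, whose logarithmic $N$ need not be an integer.
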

\begin{proof}
    Fix $i\in{I}$. The set $C_{i}$ satisfies an interior cone condition, as described in Definition \ref{defnConeCond}. For a point $\xxi\in{C_{i}}$ its respective cone is denoted $C(\xxi,\xi(\xxi),\alpha,r)$. Applying Lemma \ref{lemConeCondBall} we see that for every $h\leq{r/(1+\sin(\alpha))}$ the closed ball $B(\yyi,h\sin(\alpha))$ with centre $\yyi=\xxi+h\xi$ and radius $h\sin(\alpha)$ is contained within the cone $C(\xxi,\xi(\xxi),\alpha,r)$. An example of a cone and a ball for a point $\xxi\in{C_{i}}$ is shown in Figure \ref{figCone}.
    \begin{figure}[H]
    \centering

\tikzset{every picture/.style={line width=0.75pt}} 

\begin{tikzpicture}[x=0.6pt,y=0.6pt,yscale=-1,xscale=1]

\draw   (331.04,35.82) .. controls (343.13,66.21) and (349.85,99.32) .. (350.04,134.01) .. controls (350.22,167.14) and (344.42,198.91) .. (333.64,228.29) -- (80.38,135.46) -- cycle ;
\draw   (202.05,133.07) .. controls (202.05,96.09) and (232.03,66.12) .. (269.01,66.12) .. controls (305.98,66.12) and (335.96,96.09) .. (335.96,133.07) .. controls (335.96,170.05) and (305.98,200.02) .. (269.01,200.02) .. controls (232.03,200.02) and (202.05,170.05) .. (202.05,133.07) -- cycle ;
\draw  [dash pattern={on 4.5pt off 4.5pt}]  (80.38,135.46) -- (269.01,133.07) ;
\draw    (269.01,133.07) ;
\draw [shift={(269.01,133.07)}, rotate = 0] [color={rgb, 255:red, 0; green, 0; blue, 0 }  ][fill={rgb, 255:red, 0; green, 0; blue, 0 }  ][line width=0.75]      (0, 0) circle [x radius= 3.35, y radius= 3.35]   ;
\draw  [draw opacity=0] (126.59,116.58) .. controls (130.22,121.28) and (132.1,127.35) .. (131.39,133.73) .. controls (131.36,133.99) and (131.33,134.25) .. (131.29,134.5) -- (107.77,131.09) -- cycle ; \draw   (126.59,116.58) .. controls (130.22,121.28) and (132.1,127.35) .. (131.39,133.73) .. controls (131.36,133.99) and (131.33,134.25) .. (131.29,134.5) ;  
\draw    (269.21,131.08) -- (275.54,68.5) ;
\draw [shift={(275.74,66.51)}, rotate = 95.78] [color={rgb, 255:red, 0; green, 0; blue, 0 }  ][line width=0.75]    (10.93,-3.29) .. controls (6.95,-1.4) and (3.31,-0.3) .. (0,0) .. controls (3.31,0.3) and (6.95,1.4) .. (10.93,3.29)   ;
\draw [shift={(269.01,133.07)}, rotate = 275.78] [color={rgb, 255:red, 0; green, 0; blue, 0 }  ][line width=0.75]    (10.93,-3.29) .. controls (6.95,-1.4) and (3.31,-0.3) .. (0,0) .. controls (3.31,0.3) and (6.95,1.4) .. (10.93,3.29)   ;
\draw    (80.38,135.46) ;
\draw [shift={(80.38,135.46)}, rotate = 0] [color={rgb, 255:red, 0; green, 0; blue, 0 }  ][fill={rgb, 255:red, 0; green, 0; blue, 0 }  ][line width=0.75]      (0, 0) circle [x radius= 3.35, y radius= 3.35]   ;

\draw (56.98,124.07) node [anchor=north west][inner sep=0.75pt]    {$\mathbf{x}_{i}$};
\draw (262.99,139.17) node [anchor=north west][inner sep=0.75pt]    {$\mathbf{y}_{i}$};
\draw (115.23,121.11) node [anchor=north west][inner sep=0.75pt]    {$\alpha $};
\draw (186.54,116.36) node [anchor=north west][inner sep=0.75pt]    {$h$};
\draw (215.3,87.83) node [anchor=north west][inner sep=0.75pt]    {\small{$h\sin( \alpha )$}};

\end{tikzpicture}

    \caption{The cone $C(\xxi,\xi(\xxi),\alpha,r)$ containing the ball $B(\yyi,h\sin(\alpha))$ as described in Lemma \ref{lemConeCondBall}.}
    \label{figCone}
\end{figure}
    Throughout this proof we use the following two choices of variables. Choose $h$ to be such that
    \begin{equation}
        \label{eqnlemConeCondBoundprf1}
        h\leq{\min\left(\frac{r}{1+\sin(\alpha)},\frac{B}{1+\sin(\alpha)}\right)}
    \end{equation}
    and choose $N\in\bN_{0}$ to be
    \begin{equation}
        \label{eqnlemConeCondBoundprf2}
        N:=\begin{cases}
            \log_{2}\left(\frac{\sqrt{d}}{2h\sin(\alpha)}\right) & \text{if }0<h\sin(\alpha)\leq\frac{\sqrt{d}}{2}\\
            0 & \text{if }h\sin(\alpha)>\frac{\sqrt{d}}{2}
        \end{cases}
    \end{equation}\par
    We prove this lemma in two steps. Firstly we show that for $n>{N}$ there exists a collocation point $\xxiln\in\Xin$ within the ball $B(\yyi,h\sin(\alpha))$. Then, using the restriction placed on $h$ in (\ref{eqnlemConeCondBoundprf1}), we show that the distance between this collocation point $\xxiln$ and an arbitrary point $\xxi\in{C_{i}}$ is bounded by $B$.\medskip\\
    \textbf{Step 1:} For $n>N$ and all $\xxi\in{C_{i}}$ there will exist a collocation point $\xxiln$ within the ball $B(\yyi,h\sin(\alpha))$ if
    \begin{equation}
        \label{eqnlemConeCondBoundprf2.1}
        \frac{1}{2^{n}}<\frac{2h\sin(\alpha)}{\sqrt{d}}.
    \end{equation}
    This is because the distance collocation points are from each other is $2^{-n}$, and the smallest distance between the collocation points $\Xin$ such that there could be no points within a ball $B(\mathbf{y}_{i},h\sin(\alpha))$ is $2h\sin(\alpha)/\sqrt{d}$. We show this to be the case in the following way: for $d=1$ this is clear as the distance between the collocation points is the same as the diameter of the ball. For $d\geq{2}$ we can show this using contradiction. Assume that the distance between the collocation points is $a$ with $a<2h\sin(\alpha)/\sqrt{d}$ and there are no collocation points within the ball. Let $\xxiln$ be a collocation point that is closest to the point $\yyi$. The distance between $\xxiln$ and $\yyi$ must be less than or equal to half of the distance between the collocation points $a$ multiplied by $\sqrt{d}$, as a unit hypercube's longest diagonal in $d$ dimensions is equal to $\sqrt{d}$. This implies that
\begin{equation}
    \label{eqnCompBallRad&CollPtDist}
    h\sin(\alpha)<\|\yyi-\xxiln\|_{2}\leq\frac{1}{2}a\sqrt{d}<h\sin(\alpha),\nonumber
\end{equation}
which is a contradiction. In Figure \ref{figBall} we show the point $\yyi\in\bR^{2}$ with a ball of radius $h\sin(\alpha)$ around it. It can be seen here that the smallest distance that is between the collocation points such that there are no points within the ball is $2h\sin(\alpha)/\sqrt{2}$.\\
    \begin{figure}[H]
        \centering

\tikzset{every picture/.style={line width=0.75pt}} 

\begin{tikzpicture}[x=0.75pt,y=0.75pt,yscale=-1,xscale=1]

\draw  [draw opacity=0] (14,13) -- (244.15,13) -- (244.15,243.15) -- (14,243.15) -- cycle ; \draw  [color={rgb, 255:red, 211; green, 211; blue, 211 }  ,draw opacity=1 ] (90.72,13) -- (90.72,243.15)(167.43,13) -- (167.43,243.15) ; \draw  [color={rgb, 255:red, 211; green, 211; blue, 211 }  ,draw opacity=1 ] (14,89.72) -- (244.15,89.72)(14,166.43) -- (244.15,166.43) ; \draw  [color={rgb, 255:red, 211; green, 211; blue, 211 }  ,draw opacity=1 ] (14,13) -- (244.15,13) -- (244.15,243.15) -- (14,243.15) -- cycle ;
\draw    (90.72,89.72) ;
\draw [shift={(90.72,89.72)}, rotate = 0] [color={rgb, 255:red, 0; green, 0; blue, 0 }  ][fill={rgb, 255:red, 0; green, 0; blue, 0 }  ][line width=0.75]      (0, 0) circle [x radius= 3.35, y radius= 3.35]   ;
\draw    (90.72,166.43) ;
\draw [shift={(90.72,166.43)}, rotate = 0] [color={rgb, 255:red, 0; green, 0; blue, 0 }  ][fill={rgb, 255:red, 0; green, 0; blue, 0 }  ][line width=0.75]      (0, 0) circle [x radius= 3.35, y radius= 3.35]   ;
\draw    (167.43,89.72) ;
\draw [shift={(167.43,89.72)}, rotate = 0] [color={rgb, 255:red, 0; green, 0; blue, 0 }  ][fill={rgb, 255:red, 0; green, 0; blue, 0 }  ][line width=0.75]      (0, 0) circle [x radius= 3.35, y radius= 3.35]   ;
\draw    (167.43,166.43) ;
\draw [shift={(167.43,166.43)}, rotate = 0] [color={rgb, 255:red, 0; green, 0; blue, 0 }  ][fill={rgb, 255:red, 0; green, 0; blue, 0 }  ][line width=0.75]      (0, 0) circle [x radius= 3.35, y radius= 3.35]   ;
\draw   (75.51,128.74) .. controls (75.51,99.15) and (99.49,75.17) .. (129.08,75.17) .. controls (158.66,75.17) and (182.65,99.15) .. (182.65,128.74) .. controls (182.65,158.32) and (158.66,182.31) .. (129.08,182.31) .. controls (99.49,182.31) and (75.51,158.32) .. (75.51,128.74) -- cycle ;
\draw    (129.08,128.74) ;
\draw [shift={(129.08,128.74)}, rotate = 0] [color={rgb, 255:red, 0; green, 0; blue, 0 }  ][fill={rgb, 255:red, 0; green, 0; blue, 0 }  ][line width=0.75]      (0, 0) circle [x radius= 3.35, y radius= 3.35]   ;
\draw    (166.03,91.14) -- (130.48,127.31) ;
\draw [shift={(129.08,128.74)}, rotate = 314.51] [color={rgb, 255:red, 0; green, 0; blue, 0 }  ][line width=0.75]    (10.93,-3.29) .. controls (6.95,-1.4) and (3.31,-0.3) .. (0,0) .. controls (3.31,0.3) and (6.95,1.4) .. (10.93,3.29)   ;
\draw [shift={(167.43,89.72)}, rotate = 134.51] [color={rgb, 255:red, 0; green, 0; blue, 0 }  ][line width=0.75]    (10.93,-3.29) .. controls (6.95,-1.4) and (3.31,-0.3) .. (0,0) .. controls (3.31,0.3) and (6.95,1.4) .. (10.93,3.29)   ;
\draw    (195.21,89.72) -- (195.21,166.1) ;
\draw [shift={(195.21,166.1)}, rotate = 270] [color={rgb, 255:red, 0; green, 0; blue, 0 }  ][line width=0.75]    (0,5.59) -- (0,-5.59)   ;
\draw [shift={(195.21,89.72)}, rotate = 270] [color={rgb, 255:red, 0; green, 0; blue, 0 }  ][line width=0.75]    (0,5.59) -- (0,-5.59)   ;

\draw (117.58,133.38) node [anchor=north west][inner sep=0.75pt]    {$\mathbf{y}_{i}$};
\draw (92.57,92.68) node [anchor=north west][inner sep=0.75pt]    {$h\sin( \alpha )$};
\draw (195.17,106.46) node [anchor=north west][inner sep=0.75pt]    {$\frac{2h\sin( \alpha )}{\sqrt{2}}$};

\end{tikzpicture}

        \caption{The smallest distance apart that the collocation points can be in $\bR^{2}$ such that they are not contained in a ball of radius $h\sin(\alpha)$ on a regular grid. We see that this distance is $2h\sin(\alpha)/\sqrt{2}$.}
        \label{figBall}
    \end{figure}\par
    In the choice of $N$ as given in (\ref{eqnlemConeCondBoundprf2}) there are two cases, one is when the distance $2h\sin(\alpha)/\sqrt{d}$ is greater than one, and the other is when it is between zero and one. We consider both cases and in both show that when $n>{N}$ the inequality (\ref{eqnlemConeCondBoundprf2.1}) holds.\medskip\\
    \textit{Case 1:} $2h\sin(\alpha)/\sqrt{d}$ is greater than one. The collocation points are chosen to be $\Xin=2^{-n}\bZ^{d}\cap{C_{i}}$ with $n\in\bN$. The distance collocation points are from each other is $2^{-n}\leq{1/2}$. Therefore, when $2h\sin(\alpha)/\sqrt{d}$ is greater than one (\ref{eqnlemConeCondBoundprf2.1}) holds.\medskip\\ 
    \textit{Case 2:} $2h\sin(\alpha)/\sqrt{d}$ is between zero and one. As $n>{N}$ we have
    \begin{equation}
        \label{eqnlemConeCondBoundprf3}
        n>\log_{2}\left(\frac{\sqrt{d}}{2h\sin(\alpha)}\right)\Rightarrow\frac{1}{2^{n}}<\frac{2h\sin(\alpha)}{\sqrt{d}}.\nonumber
    \end{equation}
    Hence, the inequality (\ref{eqnlemConeCondBoundprf2.1}) holds.\par
    Therefore, when $n>N$ where $N$ is defined in (\ref{eqnlemConeCondBoundprf2}), there exists a collocation point $\xxiln$ in the ball $B(\yyi,h\sin(\alpha))$.\medskip\\
    \textbf{Step 2:} Now let $\mathbf{x}_{i}\in{C_{i}}$. By Step 1 there exists a collocation point $\xxiln$ within the ball $B(\mathbf{y}_{i},h\sin(\alpha))$. As $\mathbf{y}_{i}:=\mathbf{x}_{i}+h\xi$,
    \begin{equation}
        \label{eqnlemConeCondBoundprf4}
        \|\yyi-\xxi\|_{2}=h\|\xi\|_{2}=h.
    \end{equation}
    Using the radius of the ball we have that
    \begin{equation}
        \label{eqnlemConeCondBoundprf5}
        \|\yyi-\xxiln\|_{2}\leq{h\sin(\alpha)}.
    \end{equation}
    Now we can use (\ref{eqnlemConeCondBoundprf1}), (\ref{eqnlemConeCondBoundprf4}) and (\ref{eqnlemConeCondBoundprf5}), and the triangle inequality to conclude that
    \begin{equation}
        \label{eqnlemConeCondBoundprf6}
        \|\xxi-\xxiln\|_{2}\leq\|\yyi-\xxi\|_{2}+\|\yyi-\xxiln\|_{2}\leq{h(1+\sin(\alpha))}\leq{B}.\nonumber
    \end{equation}
    This proves the lemma.
    \end{proof}

Now we can use this lemma to see that choosing the collocation points as $Z_{n}=2^{-n}\bZ^{d}\cap\Omega$ in Theorem \ref{thmConvergence} is a sufficient condition to satisfy the assumption on the fill distance.
\begin{cor}
    \label{corFillDistance}
    Choose $Z_{n}=2^{-n}\bZ^{d}\cap\Omega$ with $\Xin:=Z_{n}\cap{C_{i}}$, where $\xxiln$ are elements of $\Xin$ for $\ell=1,\dots,\Lin$. Assume the sets $C_{i}$ satisfy an interior cone condition. Then the fill distance $h_{\Xin,C_{i}}$ defined in (\ref{eqnthmConvergence3}) satisfies
    \begin{equation}
        \label{eqncorFillDistance1}
        \lim_{n\rightarrow\infty}h_{\Xin,C_{i}}=0.\nonumber
    \end{equation}
\end{cor}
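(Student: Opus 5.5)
The corollary is essentially a translation of Lemma \ref{lemConeCondBound} into the $\varepsilon$--$N$ language of the limit in (\ref{eqnthmConvergence3}), applied to each $i\in I$ in turn. I would fix $i\in I$ and an arbitrary $B>0$. The aim is to find $N$ such that $h_{\Xin,C_{i}}\leq B$ for all $n>N$. Since $h_{\Xin,C_{i}}\geq 0$, this shows $\lim_{n\rightarrow\infty}h_{\Xin,C_{i}}=0$.

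Lemma \ref{lemConeCondBound} applies directly, because $C_{i}$ satisfies an interior cone condition and the collocation points are $\Xin=2^{-n}\bZ^{d}\cap C_{i}$. It gives an $N\in\bN_{0}$ such that for all $n>N$ and every $\xxi\in C_{i}$ there is a collocation point $\xxiln\in\Xin$ with $\|\xxi-\xxiln\|_{2}\leq B$. For such $n$:
\begin{itemize}
\item $\Xin$ is nonempty, since $C_{i}\neq\emptyset$; so the minimum in (\ref{eqnthmConvergence3}) is over a nonempty finite set, which is finite because $\Omega$ is bounded.
\item For each $\xxi\in C_{i}$ we get $\min_{\ell}\|\xxi-\xxiln\|_{2}\leq B$.
\item The bound $B$ does not depend on $\xxi$, so taking the supremum over $\xxi\in C_{i}$ gives $h_{\Xin,C_{i}}\leq B$.
\end{itemize}

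Since $B>0$ was arbitrary and $i$ was fixed but arbitrary, the conclusion holds for every $i\in I$. This is exactly assumption (\ref{eqnthmConvergence3}) of Theorem \ref{thmConvergence}.

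There is no substantial obstacle here. The only point needing care is that Lemma \ref{lemConeCondBound} is uniform in $\xxi$, which is what lets the supremum be bounded. That uniformity holds because the cone angle $\alpha$ and radius $r$ in Definition \ref{defnConeCond} are the same for all points of $C_{i}$, so the $h$ and $N$ chosen in (\ref{eqnlemConeCondBoundprf1})--(\ref{eqnlemConeCondBoundprf2}) depend only on $B$, $\alpha$, $r$ and $d$.
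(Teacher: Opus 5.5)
Your proposal is correct and matches the paper's proof. For arbitrary $B>0$ you invoke Lemma~\ref{lemConeCondBound}, whose conclusion is uniform in $\xxi\in C_{i}$, and take the supremum to get $h_{\Xin,C_{i}}\leq B$ for all $n>N$; your remarks on the nonemptiness of $\Xin$ and on $N$ depending only on $B$, $\alpha$, $r$ and $d$ simply make explicit what the paper's one-line argument leaves implicit.
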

\begin{proof}
    The result of Lemma \ref{lemConeCondBound} implies that for any $B>0$ there exists an $N\in\bN_{0}$ such that for all $n>N$ and $\xxi\in{C_{i}}$ there exists a collocation point $\xxiln\in\Xin$ such that
    \begin{equation}
    \label{eqnSupMinDiffBdd}
        h_{\Xin,C_{i}}=\sup_{\xxi\in{C_{i}}}\min_{\ell=1,\dots,\Lin}\|\xxi-\xxiln\|_{2}\leq{B}.\nonumber
    \end{equation}
\end{proof}
This means that when the set $C_{i}$ satisfies an interior cone condition, which is guaranteed if $C_{i}$ has Lipschitz boundary by Section 4.7 on page 67 of \cite{adams2003sobolev}, then if the collocation points are chosen as $\Xin:=Z_{n}\cap{C_{i}}$ the fill distance defined in (\ref{eqnthmConvergence3}) tends to zero as $n$ tends to infinity.

\section{Construction using quadratic programming}
\label{secConstruction}

In this section we show that the minimisation problem (\ref{eqnMinProbi}) defined in Section \ref{secDiscret} is equivalent to the minimisation problem (\ref{eqnMinProb}), which appears as equation (8) in \cite{ward_construction_nodate}. Hence, we can apply Theorem \ref{thmConvergence} and see that the minimisation problem (\ref{eqnMinProb}) converges as the fill distance decreases. We state techniques employed previously in \cite{ward_construction_nodate} to find a solution to (\ref{eqnMinProb}). This solution is a Lyapunov function for the system (\ref{eqnfis}).\par
The techniques to find a solution to (\ref{eqnMinProbi}) are summarised in the following algorithm that can also be found in \cite{ward_construction_nodate}; this algorithm is implemented using MATLAB code for the examples in Section \ref{secExamples}.

\begin{algorithm}[H]
\caption{Lyapunov functions for switched systems}
\label{algMain}
    \begin{algorithmic}[1]
        \REQUIRE State space $\Omega$ and partition $\Ptt$ for $\theta=1,\dots,\Theta$ (see Section \ref{secPartition}), functions $\mathbf{f}_{i}(\mathbf{x})$ (\ref{eqnfis}), Wendland function $\Psi_0(r)$ (see Section \ref{secRKHS}), vector $\mathbf{b}$ (\ref{eqnbTAb}) and evaluation points.
        \STATE Fix collocation points $Y^{\theta}=\{\ymtt\}_{m=1}^{\Mtt}$ in each region $\OPtt$, $\theta=1,\dots,\Theta$.
        \STATE If $\mathbf{0}$ was selected as a collocation point remove it.
        \STATE For each collocation point find a basis of the vectors $\mathbf{f}_{\pjtt}(\ymtt)$ and use this to find the values of the coefficients $\aassjmtt$ (\ref{eqnllssmtt}).
        \STATE Find $Q$ (\ref{eqnQ}).
        \STATE Find $A$ (\ref{eqnAElem}).
        \STATE Solve a quadratic programming problem for $\boldsymbol{\beta}$ (\ref{eqnbTAb}).
        \STATE Find values of $v$ (\ref{eqnLyapv}) at evaluation points and plot. For all $i\in{I}$ find values of the orbital derivatives $\nabla{v}\cdot\mathbf{f}_{i}$ at evaluation points in $C_{i}$ and plot.
     \end{algorithmic}
\end{algorithm}
Section \ref{secMinProb} provides details on the collocation points $Y^{\theta}$ and how these relate to the collocation points $X_{i}$ given in Section \ref{secDiscret}. This section also contains Lemma \ref{lemEquivMinProbs}, where we show that the minimisation problems (\ref{eqnMinProbi}) and (\ref{eqnMinProb}) are equivalent. The methods contained within \cite{ward_construction_nodate} to find a solution to the minimisation problem (\ref{eqnMinProb}) are explained briefly in Sections \ref{secBasis} and \ref{secQuadProg}. With regard to Algorithm \ref{algMain}, details on steps $3$ and $4$ can be found in Section \ref{secBasis}, and on steps $5$, $6$ and $7$ in Section \ref{secQuadProg}.

\subsection{Partitioning the state space}
\label{secPartition}

We define the following sets from \cite{ward_construction_nodate} for ${P}\subseteq{I}$, $P\neq\emptyset$
\begin{equation}
    \label{eqnOP}
    \OP:=\left(\bigcap_{i\in{P}}C_{i}\right)\cap\left(\bigcap_{j\in{I\setminus{P}}}C_{j}^{c}\right).
\end{equation}
Figure \ref{fig:PartitionExampleOmegas} illustrates a partition for the switched system (\ref{eqnExampleSwitchedTrajectory}), visualised previously in Figure \ref{fig:PartitionExampleCis}. Here $I=\{1,2\}$ and the sets corresponding to $P^{1}=\{1\}$, $P^{2}=\{1,2\}$ and $P^{3}=\{2\}$ are nonempty and partition the state space. These are defined as $\Omega^{\{1\}}=C_{1}\cap{C_{2}^{c}}$, $\Omega^{\{1,2\}}=C_{1}\cap{C_{2}}$ and $\Omega^{\{2\}}={C_{1}^{c}}\cap{C_{2}}$.\par
We now give a lemma that did not appear in \cite{ward_construction_nodate} showing that the sets $\OP$ (\ref{eqnOP}) are a partition of $\Omega$.
\begin{lemma}
    \label{lemPartOP}
    The sets $\OP$ defined in (\ref{eqnOP}) for all $\emptyset\neq{P}\subseteq{I}$ are a partition of $\Omega$, i.e.
    \begin{equation}
    \label{eqnlemPartOP1}
    \Omega=\bigcup_{\emptyset\neq{P}\subseteq{I}}\OP
    \end{equation}
    where $\Omega^{P_{1}}\cap\Omega^{P_{2}}=\emptyset$ for all subsets $P_{1},P_{2}\subseteq{I}$ with $P_{1}\neq{P_{2}}$.
\end{lemma}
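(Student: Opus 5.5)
The plan is to argue pointwise, assigning to each $\mathbf{x}\in\Omega$ the set of indices of the regions containing it. For $\mathbf{x}\in\Omega$, define
\begin{equation*}
P(\mathbf{x}):=\{i\in I:\mathbf{x}\in C_{i}\}.
\end{equation*}
Everything will follow from the observation that $\mathbf{x}\in\OP$ if and only if $P=P(\mathbf{x})$. Throughout, complements $C_{j}^{c}$ are taken relative to $\Omega$. Since $C_{j}\subseteq\Omega$ by Assumptions \ref{assumpsMain}(2), this matches the intended meaning, and in any case each $\OP$ with $P\neq\emptyset$ lies in some $C_{i}\subseteq\Omega$.

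For the covering statement (\ref{eqnlemPartOP1}), I will first prove the inclusion $\supseteq$. If $P\neq\emptyset$, pick $i\in P$; then $\OP\subseteq C_{i}\subseteq\Omega$. For the inclusion $\subseteq$, fix $\mathbf{x}\in\Omega$. By the covering property (\ref{eqnCoveringCi}) there is some $i$ with $\mathbf{x}\in C_{i}$, so $P(\mathbf{x})\neq\emptyset$. By definition, $\mathbf{x}\in C_{i}$ for every $i\in P(\mathbf{x})$ and $\mathbf{x}\in C_{j}^{c}$ for every $j\in I\setminus P(\mathbf{x})$. Hence $\mathbf{x}\in\Omega^{P(\mathbf{x})}$, and this set is one of the sets in the union.

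For disjointness, take nonempty $P_{1}\neq P_{2}$ and suppose there is some $\mathbf{x}\in\Omega^{P_{1}}\cap\Omega^{P_{2}}$. Without loss of generality there is an index $i\in P_{1}\setminus P_{2}$ (otherwise swap the roles of $P_{1}$ and $P_{2}$). From $\mathbf{x}\in\Omega^{P_{1}}$ we get $\mathbf{x}\in C_{i}$. From $\mathbf{x}\in\Omega^{P_{2}}$, and since $i\in I\setminus P_{2}$, we get $\mathbf{x}\in C_{i}^{c}$. This is a contradiction.

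There is no substantive obstacle in this argument. The only point needing care is to use the covering assumption to exclude $P=\emptyset$, which is why the union in (\ref{eqnlemPartOP1}) ranges only over nonempty $P$.
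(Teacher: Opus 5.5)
Your proof is correct and takes essentially the same route as the paper. Covering uses the index set $P(\mathbf{x})=\{i\in I:\mathbf{x}\in C_{i}\}$, which is nonempty by (\ref{eqnCoveringCi}), and disjointness uses an index lying in exactly one of $P_{1},P_{2}$, which forces $\mathbf{x}\in C_{i}\cap C_{i}^{c}$; your disjointness step never uses that $P_{1},P_{2}$ are nonempty, so it also covers the lemma as literally stated for all subsets.
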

\begin{proof}
    First we show (\ref{eqnlemPartOP1}), which we do by showing that
    \begin{equation}
    \label{eqnlemprfPartOP1}
        \bigcup_{\emptyset\neq{P}\subseteq{I}}\OP\subseteq\Omega\text{ and }\bigcup_{\emptyset\neq{P}\subseteq{I}}\OP\supseteq\Omega.
    \end{equation}
    The first statement is immediate due to the definition of $\OP$ in (\ref{eqnOP}).
    To show the second statement we take $\mathbf{y}\in\Omega$. Due to (\ref{eqnCoveringCi}), we know that $\mathbf{y}$ must be contained in at least one of the sets $C_{i}$ for $i\in{I}$, and we denote $P$ as the nonempty subset of $I$ that contains exactly all of the $i$'s such that $\mathbf{y}$ is in $C_{i}$. This means that for $j\in{I}\setminus{P}$, $\mathbf{y}\notin{C_{j}}$. Therefore, $\mathbf{y}\in\OP$ by the definition in (\ref{eqnOP}), so we have that
    \begin{equation}
        \label{eqnlemprfPartOP2}
        \mathbf{y}\in\bigcup_{\emptyset\neq{P}\subseteq{I}}\OP
    \end{equation}
    showing the second statement from (\ref{eqnlemprfPartOP1}). This tells us that (\ref{eqnlemPartOP1}) holds.\par
    To prove the second part of the lemma we assume without loss of generality that there exists a $\mathbf{y}$ that is in both $\Omega^{P_{1}}$ and $\Omega^{P_{2}}$ for $P_{1}\neq{P_{2}}$. As $P_{1}$ and $P_{2}$ are distinct, there must be an element $i$ that is in exactly one of $P_{1}$ or $P_{2}$. Consequently, $\mathbf{y}\in{C_{i}}$ and $\mathbf{y}\in{C_{i}^{c}}$; however, this is not possible, so there is no $\mathbf{y}$ that is in both $\Omega^{P_{1}}$ and $\Omega^{P_{2}}$ which implies $\Omega^{P_{1}}\cap\Omega^{P_{2}}=\emptyset$.
\end{proof}

For our algorithm we refer to the total number of subsets $P$ that induce a nonempty $\OP$ as $\Theta$, i.e. in the case of Figure \ref{fig:PartitionExampleOmegas} we would have $\Theta=3$. This number is bounded above by $2^{k}-1$ as there are $k$ elements in the set $I$ and we cannot have $P=\emptyset$. This enables us to label these subsets as $\Ptt$ where $\theta=1,\dots,\Theta$ and we can write the elements of each of these sets as $\pjtt$, with $j=1,\dots,\cPtt$ where $\cPtt:=|\Ptt|$. In Figure \ref{fig:PartitionExampleOmegas} we have labelled $P^{1}=\{1\}$ with $p_{1}^{1}=1$, $P^{2}=\{1,2\}$ with $p_{1}^{2}=1$ and $p_{2}^{2}=2$ and $P^{3}=\{2\}$ with $p_{1}^{3}=2$. 

\begin{figure}[h]
    \centering

\tikzset{every picture/.style={line width=0.75pt}} 

\begin{tikzpicture}[x=0.75pt,y=0.75pt,yscale=-1,xscale=1]

\draw  [fill={rgb, 255:red, 74; green, 144; blue, 226 }  ,fill opacity=0.33 ] (16.36,28.9) .. controls (16.9,29.17) and (235.11,28.32) .. (236.58,29.79) .. controls (238.05,31.26) and (114.59,56.25) .. (120.47,88.58) .. controls (126.35,120.91) and (248.33,129.73) .. (240.99,157.66) .. controls (233.64,185.58) and (202.59,207) .. (201.46,207) .. controls (200.32,207) and (15.02,205.86) .. (16.36,206.87) .. controls (17.7,207.88) and (15.82,28.64) .. (16.36,28.9) -- cycle ;
\draw  [fill={rgb, 255:red, 208; green, 2; blue, 27 }  ,fill opacity=0.32 ] (16.36,29.9) .. controls (17.59,30.79) and (150.38,30.45) .. (151.45,30.45) .. controls (152.53,30.45) and (284.12,29.91) .. (283.31,29.9) .. controls (282.5,29.9) and (284.12,208.42) .. (283.31,207.87) .. controls (282.5,207.33) and (173.38,207.16) .. (171.91,207.16) .. controls (170.44,207.16) and (83.73,163.07) .. (94.02,121.91) .. controls (104.3,80.76) and (16.12,71.94) .. (16.12,71.94) .. controls (16.12,71.94) and (15.13,29.02) .. (16.36,29.9) -- cycle ;
\draw  [fill={rgb, 255:red, 74; green, 144; blue, 226 }  ,fill opacity=0.32 ][line width=0.75]  (301.24,42.82) -- (330.64,42.82) -- (330.64,72.21) -- (301.24,72.21) -- cycle ;
\draw  [fill={rgb, 255:red, 208; green, 2; blue, 27 }  ,fill opacity=0.3 ][line width=0.75]  (301.24,101.6) -- (330.64,101.6) -- (330.64,131) -- (301.24,131) -- cycle ;
\draw  [line width=1.5]  (16.36,28.82) -- (283.31,28.82) -- (283.31,206.79) -- (16.36,206.79) -- cycle ;
\draw  [fill={rgb, 255:red, 74; green, 144; blue, 226 }  ,fill opacity=0.32 ][line width=0.75]  (301.24,101.6) -- (330.64,101.6) -- (330.64,131) -- (301.24,131) -- cycle ;
\draw  [fill={rgb, 255:red, 208; green, 2; blue, 27 }  ,fill opacity=0.3 ][line width=0.75]  (301.24,160.39) -- (330.64,160.39) -- (330.64,189.79) -- (301.24,189.79) -- cycle ;

\draw (332.64,105) node [anchor=north west][inner sep=0.75pt]  [font=\normalsize]  {$=C_{1} \cap C_{2} =\Omega ^{\{1,2\}} =\Omega {^{P}}^{2}$};
\draw (263.72,13.06) node [anchor=north west][inner sep=0.75pt]    {$\Omega $};
\draw (332.64,46.22) node [anchor=north west][inner sep=0.75pt]  [font=\normalsize]  {$=C_{1} \cap C_{2}^{c} =\Omega ^{\{1\}} =\Omega {^{P}}^{1}$};
\draw (332.64,163.79) node [anchor=north west][inner sep=0.75pt]  [font=\normalsize]  {$=C_{1}^{c} \cap C_{2} =\Omega ^{\{2\}} =\Omega {^{P}}^{3}$};

\end{tikzpicture}

    \caption{The example (\ref{eqnExampleSwitchedTrajectory}) from Figure \ref{fig:PartitionExampleCis} showing how the subsets $\OP$ are defined using (\ref{eqnOP}). In this case we have $\Theta=3$ and we label $P^{1}=\{1\}$, $P^{2}=\{1,2\}$ and $P^{3}=\{2\}$.}
    \label{fig:PartitionExampleOmegas}
\end{figure}

\subsection{Equivalent minimisation problem}
\label{secMinProb}

Using this set-up we now show that the minimisation problem (\ref{eqnMinProbi}) is equivalent to the minimisation problem (8) from \cite{ward_construction_nodate}. The minimisation problem (\ref{eqnMinProbi}) has constraints that need to be satisfied for each collocation point $\xxil\in{X_{i}}$ for all $i=1,\dots,k$ and $\ell=1,\dots,L_{i}$. On the other hand, in the minimisation problem from \cite{ward_construction_nodate} the constraints are imposed at each collocation point in each set $\OPtt$ for $\theta=1,\dots,\Theta$. Therefore, we need to fix collocation points in the sets $\OPtt$ and show how these are related to the collocation points $X_{i}$ in the sets $C_{i}$. We define the collocation points in the set $\OPtt$ to be
\begin{equation}
    \label{eqnCollPts}
    Y^{\theta}:=Z\cap\OPtt=\{\mathbf{y}^{1,\theta},\dots,\mathbf{y}^{\Mtt,\theta}\}
\end{equation}
where $Z$ is the set of collocation points in $\Omega$ (\ref{eqnCollPtsOmega}). Here $\Mtt$ is the number of collocation points in $\OPtt$.\par
In Figure \ref{fig:PartitionGridpoints} we show how the collocation points $Z$ (\ref{eqnCollPtsOmega}), $X_{i}$ (\ref{eqnCollPtsi}) for $i=1,\dots,k$ and $Y^{\theta}$ for $\theta=1,\dots,\Theta$ are related using the example discussed before in Figures \ref{fig:PartitionExampleCis} and \ref{fig:PartitionExampleOmegas}.
\begin{figure}[h]
    \centering

\tikzset{every picture/.style={line width=0.75pt}} 

\begin{tikzpicture}[x=0.75pt,y=0.75pt,yscale=-1,xscale=1]

\draw  [fill={rgb, 255:red, 74; green, 144; blue, 226 }  ,fill opacity=0.33 ] (40.86,58.22) .. controls (41.4,58.48) and (258.99,57.64) .. (260.46,59.1) .. controls (261.93,60.57) and (138.81,85.48) .. (144.68,117.73) .. controls (150.54,149.97) and (272.18,158.76) .. (264.86,186.61) .. controls (257.53,214.46) and (226.57,235.82) .. (225.44,235.82) .. controls (224.31,235.82) and (39.52,234.68) .. (40.86,235.69) .. controls (42.2,236.7) and (40.32,57.96) .. (40.86,58.22) -- cycle ;
\draw  [fill={rgb, 255:red, 208; green, 2; blue, 27 }  ,fill opacity=0.32 ] (40.86,59.22) .. controls (42.09,60.1) and (174.5,59.76) .. (175.57,59.76) .. controls (176.65,59.76) and (307.87,59.23) .. (307.06,59.22) .. controls (306.26,59.21) and (307.87,237.24) .. (307.06,236.69) .. controls (306.26,236.14) and (197.44,235.98) .. (195.97,235.98) .. controls (194.51,235.98) and (108.04,192.01) .. (118.3,150.97) .. controls (128.56,109.93) and (40.62,101.14) .. (40.62,101.14) .. controls (40.62,101.14) and (39.63,58.33) .. (40.86,59.22) -- cycle ;
\draw  [line width=1.5]  (40.86,59.14) -- (307.06,59.14) -- (307.06,236.61) -- (40.86,236.61) -- cycle ;
\draw  [color={rgb, 255:red, 0; green, 0; blue, 0 }  ,draw opacity=1 ][fill={rgb, 255:red, 74; green, 144; blue, 226 }  ,fill opacity=1 ] (123.5,198.22) .. controls (123.5,195.81) and (125.45,193.86) .. (127.86,193.86) .. controls (130.27,193.86) and (132.22,195.81) .. (132.22,198.22) .. controls (132.22,200.63) and (130.27,202.58) .. (127.86,202.58) .. controls (125.45,202.58) and (123.5,200.63) .. (123.5,198.22) -- cycle ;
\draw  [color={rgb, 255:red, 0; green, 0; blue, 0 }  ,draw opacity=1 ][fill={rgb, 255:red, 74; green, 144; blue, 226 }  ,fill opacity=1 ] (74.5,149.22) .. controls (74.5,146.81) and (76.45,144.86) .. (78.86,144.86) .. controls (81.27,144.86) and (83.22,146.81) .. (83.22,149.22) .. controls (83.22,151.63) and (81.27,153.58) .. (78.86,153.58) .. controls (76.45,153.58) and (74.5,151.63) .. (74.5,149.22) -- cycle ;
\draw  [color={rgb, 255:red, 0; green, 0; blue, 0 }  ,draw opacity=1 ][fill={rgb, 255:red, 74; green, 144; blue, 226 }  ,fill opacity=1 ] (74.5,198.22) .. controls (74.5,195.81) and (76.45,193.86) .. (78.86,193.86) .. controls (81.27,193.86) and (83.22,195.81) .. (83.22,198.22) .. controls (83.22,200.63) and (81.27,202.58) .. (78.86,202.58) .. controls (76.45,202.58) and (74.5,200.63) .. (74.5,198.22) -- cycle ;
\draw  [color={rgb, 255:red, 0; green, 0; blue, 0 }  ,draw opacity=1 ][fill={rgb, 255:red, 145; green, 3; blue, 183 }  ,fill opacity=1 ] (74.5,100.22) .. controls (74.5,97.81) and (76.45,95.86) .. (78.86,95.86) .. controls (81.27,95.86) and (83.22,97.81) .. (83.22,100.22) .. controls (83.22,102.63) and (81.27,104.58) .. (78.86,104.58) .. controls (76.45,104.58) and (74.5,102.63) .. (74.5,100.22) -- cycle ;
\draw  [color={rgb, 255:red, 0; green, 0; blue, 0 }  ,draw opacity=1 ][fill={rgb, 255:red, 145; green, 3; blue, 183 }  ,fill opacity=1 ] (123.5,149.22) .. controls (123.5,146.81) and (125.45,144.86) .. (127.86,144.86) .. controls (130.27,144.86) and (132.22,146.81) .. (132.22,149.22) .. controls (132.22,151.63) and (130.27,153.58) .. (127.86,153.58) .. controls (125.45,153.58) and (123.5,151.63) .. (123.5,149.22) -- cycle ;
\draw  [color={rgb, 255:red, 0; green, 0; blue, 0 }  ,draw opacity=1 ][fill={rgb, 255:red, 145; green, 3; blue, 183 }  ,fill opacity=1 ] (123.5,100.22) .. controls (123.5,97.81) and (125.45,95.86) .. (127.86,95.86) .. controls (130.27,95.86) and (132.22,97.81) .. (132.22,100.22) .. controls (132.22,102.63) and (130.27,104.58) .. (127.86,104.58) .. controls (125.45,104.58) and (123.5,102.63) .. (123.5,100.22) -- cycle ;
\draw  [color={rgb, 255:red, 0; green, 0; blue, 0 }  ,draw opacity=1 ][fill={rgb, 255:red, 145; green, 3; blue, 183 }  ,fill opacity=1 ] (172.5,198.22) .. controls (172.5,195.81) and (174.45,193.86) .. (176.86,193.86) .. controls (179.27,193.86) and (181.22,195.81) .. (181.22,198.22) .. controls (181.22,200.63) and (179.27,202.58) .. (176.86,202.58) .. controls (174.45,202.58) and (172.5,200.63) .. (172.5,198.22) -- cycle ;
\draw  [color={rgb, 255:red, 0; green, 0; blue, 0 }  ,draw opacity=1 ][fill={rgb, 255:red, 145; green, 3; blue, 183 }  ,fill opacity=1 ] (172.5,149.22) .. controls (172.5,146.81) and (174.45,144.86) .. (176.86,144.86) .. controls (179.27,144.86) and (181.22,146.81) .. (181.22,149.22) .. controls (181.22,151.63) and (179.27,153.58) .. (176.86,153.58) .. controls (174.45,153.58) and (172.5,151.63) .. (172.5,149.22) -- cycle ;
\draw  [color={rgb, 255:red, 0; green, 0; blue, 0 }  ,draw opacity=1 ][fill={rgb, 255:red, 145; green, 3; blue, 183 }  ,fill opacity=1 ] (221.5,198.22) .. controls (221.5,195.81) and (223.45,193.86) .. (225.86,193.86) .. controls (228.27,193.86) and (230.22,195.81) .. (230.22,198.22) .. controls (230.22,200.63) and (228.27,202.58) .. (225.86,202.58) .. controls (223.45,202.58) and (221.5,200.63) .. (221.5,198.22) -- cycle ;
\draw  [color={rgb, 255:red, 0; green, 0; blue, 0 }  ,draw opacity=1 ][fill={rgb, 255:red, 208; green, 2; blue, 27 }  ,fill opacity=1 ] (270.5,149.22) .. controls (270.5,146.81) and (272.45,144.86) .. (274.86,144.86) .. controls (277.27,144.86) and (279.22,146.81) .. (279.22,149.22) .. controls (279.22,151.63) and (277.27,153.58) .. (274.86,153.58) .. controls (272.45,153.58) and (270.5,151.63) .. (270.5,149.22) -- cycle ;
\draw  [color={rgb, 255:red, 0; green, 0; blue, 0 }  ,draw opacity=1 ][fill={rgb, 255:red, 208; green, 2; blue, 27 }  ,fill opacity=1 ] (270.5,100.22) .. controls (270.5,97.81) and (272.45,95.86) .. (274.86,95.86) .. controls (277.27,95.86) and (279.22,97.81) .. (279.22,100.22) .. controls (279.22,102.63) and (277.27,104.58) .. (274.86,104.58) .. controls (272.45,104.58) and (270.5,102.63) .. (270.5,100.22) -- cycle ;
\draw  [color={rgb, 255:red, 0; green, 0; blue, 0 }  ,draw opacity=1 ][fill={rgb, 255:red, 208; green, 2; blue, 27 }  ,fill opacity=1 ] (221.5,100.22) .. controls (221.5,97.81) and (223.45,95.86) .. (225.86,95.86) .. controls (228.27,95.86) and (230.22,97.81) .. (230.22,100.22) .. controls (230.22,102.63) and (228.27,104.58) .. (225.86,104.58) .. controls (223.45,104.58) and (221.5,102.63) .. (221.5,100.22) -- cycle ;
\draw  [color={rgb, 255:red, 0; green, 0; blue, 0 }  ,draw opacity=1 ][fill={rgb, 255:red, 208; green, 2; blue, 27 }  ,fill opacity=1 ] (221.5,149.22) .. controls (221.5,146.81) and (223.45,144.86) .. (225.86,144.86) .. controls (228.27,144.86) and (230.22,146.81) .. (230.22,149.22) .. controls (230.22,151.63) and (228.27,153.58) .. (225.86,153.58) .. controls (223.45,153.58) and (221.5,151.63) .. (221.5,149.22) -- cycle ;
\draw  [color={rgb, 255:red, 0; green, 0; blue, 0 }  ,draw opacity=1 ][fill={rgb, 255:red, 208; green, 2; blue, 27 }  ,fill opacity=1 ] (270.5,198.22) .. controls (270.5,195.81) and (272.45,193.86) .. (274.86,193.86) .. controls (277.27,193.86) and (279.22,195.81) .. (279.22,198.22) .. controls (279.22,200.63) and (277.27,202.58) .. (274.86,202.58) .. controls (272.45,202.58) and (270.5,200.63) .. (270.5,198.22) -- cycle ;
\draw  [color={rgb, 255:red, 0; green, 0; blue, 0 }  ,draw opacity=1 ][fill={rgb, 255:red, 208; green, 2; blue, 27 }  ,fill opacity=1 ] (172.5,100.22) .. controls (172.5,97.81) and (174.45,95.86) .. (176.86,95.86) .. controls (179.27,95.86) and (181.22,97.81) .. (181.22,100.22) .. controls (181.22,102.63) and (179.27,104.58) .. (176.86,104.58) .. controls (174.45,104.58) and (172.5,102.63) .. (172.5,100.22) -- cycle ;
\draw  [color={rgb, 255:red, 0; green, 0; blue, 0 }  ,draw opacity=1 ][fill={rgb, 255:red, 74; green, 144; blue, 226 }  ,fill opacity=1 ] (367.33,64.92) .. controls (367.33,62.51) and (369.28,60.56) .. (371.69,60.56) .. controls (374.1,60.56) and (376.05,62.51) .. (376.05,64.92) .. controls (376.05,67.33) and (374.1,69.28) .. (371.69,69.28) .. controls (369.28,69.28) and (367.33,67.33) .. (367.33,64.92) -- cycle ;
\draw  [color={rgb, 255:red, 0; green, 0; blue, 0 }  ,draw opacity=1 ][fill={rgb, 255:red, 145; green, 3; blue, 183 }  ,fill opacity=1 ] (391.17,64.92) .. controls (391.17,62.51) and (393.12,60.56) .. (395.53,60.56) .. controls (397.94,60.56) and (399.89,62.51) .. (399.89,64.92) .. controls (399.89,67.33) and (397.94,69.28) .. (395.53,69.28) .. controls (393.12,69.28) and (391.17,67.33) .. (391.17,64.92) -- cycle ;
\draw  [color={rgb, 255:red, 0; green, 0; blue, 0 }  ,draw opacity=1 ][fill={rgb, 255:red, 208; green, 2; blue, 27 }  ,fill opacity=1 ] (416,64.92) .. controls (416,62.51) and (417.96,60.56) .. (420.36,60.56) .. controls (422.77,60.56) and (424.72,62.51) .. (424.72,64.92) .. controls (424.72,67.33) and (422.77,69.28) .. (420.36,69.28) .. controls (417.96,69.28) and (416,67.33) .. (416,64.92) -- cycle ;
\draw  [color={rgb, 255:red, 0; green, 0; blue, 0 }  ,draw opacity=1 ][fill={rgb, 255:red, 74; green, 144; blue, 226 }  ,fill opacity=1 ] (377.33,100.63) .. controls (377.33,98.22) and (379.28,96.27) .. (381.69,96.27) .. controls (384.1,96.27) and (386.05,98.22) .. (386.05,100.63) .. controls (386.05,103.04) and (384.1,104.99) .. (381.69,104.99) .. controls (379.28,104.99) and (377.33,103.04) .. (377.33,100.63) -- cycle ;
\draw  [color={rgb, 255:red, 0; green, 0; blue, 0 }  ,draw opacity=1 ][fill={rgb, 255:red, 145; green, 3; blue, 183 }  ,fill opacity=1 ] (401.17,100.63) .. controls (401.17,98.22) and (403.12,96.27) .. (405.53,96.27) .. controls (407.94,96.27) and (409.89,98.22) .. (409.89,100.63) .. controls (409.89,103.04) and (407.94,104.99) .. (405.53,104.99) .. controls (403.12,104.99) and (401.17,103.04) .. (401.17,100.63) -- cycle ;
\draw  [color={rgb, 255:red, 0; green, 0; blue, 0 }  ,draw opacity=1 ][fill={rgb, 255:red, 145; green, 3; blue, 183 }  ,fill opacity=1 ] (377.17,134.92) .. controls (377.17,132.51) and (379.12,130.56) .. (381.53,130.56) .. controls (383.94,130.56) and (385.89,132.51) .. (385.89,134.92) .. controls (385.89,137.33) and (383.94,139.28) .. (381.53,139.28) .. controls (379.12,139.28) and (377.17,137.33) .. (377.17,134.92) -- cycle ;
\draw  [color={rgb, 255:red, 0; green, 0; blue, 0 }  ,draw opacity=1 ][fill={rgb, 255:red, 208; green, 2; blue, 27 }  ,fill opacity=1 ] (402,134.69) .. controls (402,132.28) and (403.96,130.33) .. (406.36,130.33) .. controls (408.77,130.33) and (410.72,132.28) .. (410.72,134.69) .. controls (410.72,137.1) and (408.77,139.05) .. (406.36,139.05) .. controls (403.96,139.05) and (402,137.1) .. (402,134.69) -- cycle ;
\draw  [color={rgb, 255:red, 0; green, 0; blue, 0 }  ,draw opacity=1 ][fill={rgb, 255:red, 74; green, 144; blue, 226 }  ,fill opacity=1 ] (376.33,170.4) .. controls (376.33,167.99) and (378.28,166.04) .. (380.69,166.04) .. controls (383.1,166.04) and (385.05,167.99) .. (385.05,170.4) .. controls (385.05,172.81) and (383.1,174.76) .. (380.69,174.76) .. controls (378.28,174.76) and (376.33,172.81) .. (376.33,170.4) -- cycle ;
\draw  [color={rgb, 255:red, 0; green, 0; blue, 0 }  ,draw opacity=1 ][fill={rgb, 255:red, 145; green, 3; blue, 183 }  ,fill opacity=1 ] (376.33,205.46) .. controls (376.33,203.05) and (378.28,201.1) .. (380.69,201.1) .. controls (383.1,201.1) and (385.05,203.05) .. (385.05,205.46) .. controls (385.05,207.86) and (383.1,209.82) .. (380.69,209.82) .. controls (378.28,209.82) and (376.33,207.86) .. (376.33,205.46) -- cycle ;
\draw  [color={rgb, 255:red, 0; green, 0; blue, 0 }  ,draw opacity=1 ][fill={rgb, 255:red, 208; green, 2; blue, 27 }  ,fill opacity=1 ] (376.33,240.34) .. controls (376.33,237.93) and (378.28,235.98) .. (380.69,235.98) .. controls (383.1,235.98) and (385.05,237.93) .. (385.05,240.34) .. controls (385.05,242.75) and (383.1,244.7) .. (380.69,244.7) .. controls (378.28,244.7) and (376.33,242.75) .. (376.33,240.34) -- cycle ;

\draw (287.5,43.4) node [anchor=north west][inner sep=0.75pt]    {$\Omega $};
\draw (319.97,57.32) node [anchor=north west][inner sep=0.75pt]  [font=\normalsize]  {$Z=\{$};
\draw (322.28,92.03) node [anchor=north west][inner sep=0.75pt]  [font=\normalsize]  {$X_{1} =\{$};
\draw (322.28,126.09) node [anchor=north west][inner sep=0.75pt]  [font=\normalsize]  {$X_{2} =\{$};
\draw (321.95,161.8) node [anchor=north west][inner sep=0.75pt]  [font=\normalsize]  {$Y^{1} =\{$};
\draw (321.95,195.86) node [anchor=north west][inner sep=0.75pt]  [font=\normalsize]  {$Y^{2} =\{$};
\draw (321.95,230.74) node [anchor=north west][inner sep=0.75pt]  [font=\normalsize]  {$Y^{3} =\{$};
\draw (381,70.32) node [anchor=north west][inner sep=0.75pt]    {$,$};
\draw (405,70.32) node [anchor=north west][inner sep=0.75pt]    {$,$};
\draw (431,57.32) node [anchor=north west][inner sep=0.75pt]    {$\}$};
\draw (390,106.03) node [anchor=north west][inner sep=0.75pt]    {$,$};
\draw (416,93.03) node [anchor=north west][inner sep=0.75pt]    {$\}$};
\draw (391,140.32) node [anchor=north west][inner sep=0.75pt]    {$,$};
\draw (417,127.09) node [anchor=north west][inner sep=0.75pt]    {$\}$};
\draw (391.53,162.8) node [anchor=north west][inner sep=0.75pt]    {$\}$};
\draw (391.53,196.86) node [anchor=north west][inner sep=0.75pt]    {$\}$};
\draw (390.53,232.74) node [anchor=north west][inner sep=0.75pt]    {$\}$};

\end{tikzpicture}

    \caption{Showing how the sets of collocation points would be defined in the example from Figures \ref{fig:PartitionExampleCis} and \ref{fig:PartitionExampleOmegas}. The whole set of collocation points is the set $Z$ which comprises the blue, purple and red points. The collocation points $X_{1}$ for the set $C_{1}$ are the blue and purple points, and the points $X_{2}$ for the set $C_{2}$ are the purple and red points. The subsets $\Omega^{P^{1}}$, $\Omega^{P^{2}}$ and $\Omega^{P^{3}}$ are defined in Figure \ref{fig:PartitionExampleOmegas}, and under this definition $\Omega^{P^{1}}$ is the blue set and the collocation points $Y^{1}$ are the blue points, $\Omega^{P^{2}}$ is the purple set and the collocation points $Y^{2}$ are the purple points and $\Omega^{P^{3}}$ is the red set and the collocation points $Y^{3}$ are the red points.}
    \label{fig:PartitionGridpoints}
\end{figure}
We can now express some properties of the collocation points $Z$, $X_{i}$ and $Y^{\theta}$. In the same way as the sets $X_{i}$, the union of the sets $Y^{\theta}$ for $\theta=1,\dots,\Theta$ is the whole set of collocation points $Z$. This is written as
\begin{equation}
    \label{eqnZCollPtsUnionYCollPts}
    Z=\bigcup_{\theta=1,\dots,\Theta}Y^{\theta}.\nonumber
\end{equation}
We can also state that if $i\in{I}$ is an element of the sets $P^{\eta_{r}}\subseteq{I}$ for $1\leq\eta_{r}\leq\Theta$, $r=1,\dots,R$ and no other sets $\Ptt$ for $1\leq\theta\leq\Theta$, i.e.
\begin{equation}
    \label{eqnCiUnionOmegaPeen}
    C_{i}=\bigcup_{r=1,\dots,R}\Omega^{P^{\eta_{r}}},\nonumber
\end{equation}
we have that
\begin{equation}
    \label{eqnXCollPtsUnionYCollPts}
    X_{i}=\bigcup_{r=1,\dots,R}Y^{\eta_{r}}\phantom{vv}\text{ and }\phantom{vv}\Li=\sum_{r=1,\dots,R}M^{\eta_{r}}.
\end{equation}
We can see this in the example shown in Figure \ref{fig:PartitionGridpoints}. The set $C_{1}$ is the union of the sets $\Omega^{P^{1}}$ and $\Omega^{P^{2}}$, and here the set of collocation points $X_{1}$ for the set $C_{1}$ is the union of the collocation points $Y^{1}$ and $Y^{2}$, which are the collocation points for the sets $\Omega^{P^{1}}$ and $\Omega^{P^{2}}$, respectively. Also, the number of collocation points $L^{1}$ for the set $C_{1}$ is equal to the sum of the number of collocation points for the sets $\Omega^{P^{1}}$ and $\Omega^{P^{2}}$, i.e. $L^{1}=M^{1}+M^{2}$.
Finally, the sets of collocation points $Y^{\theta}$ are distinct as the sets $\OPtt$ are distinct, which means
\begin{equation}
    \label{eqnZCollSumYColl}
    N=\sum_{\theta=1,\dots,\Theta}M^{\theta}\nonumber
\end{equation}
where $N$ is the total number of collocation points (\ref{eqnCollPtsOmega}).\par
These collocation points have been taken in the same way as in \cite{ward_construction_nodate}, which means we can use the techniques from this article to find a minimisation problem. This is the same method as used in Section \ref{secDiscret}. Define for all $\theta=1\dots,\Theta$ and $j=1,\dots,\cPtt$ the linear differential operator
\begin{equation}
    \label{eqnDpjtt}
    D_{\pjtt}V(\mathbf{y}):=\nabla_{\mathbf{y}}
    V(\mathbf{y})\cdot\mathbf{f}_{\pjtt}(\mathbf{y})\text{ }\forall\mathbf{y}\in\OPtt
\end{equation}
where for each $i\in I$ we assume that $\mathbf{f}_{\pjtt}$ is the restriction to $C_{\pjtt}$ of a $C^{\alpha-1}$ vector field defined on an open neighbourhood of $C_{\pjtt}$ for $j=1,\dots,\cPtt$ and $\alpha>d/2+2$. The system of partial differential inequalities we wish to solve is
\begin{equation}
    \label{eqnDV}
    D_{\pjtt}V(\mathbf{y})\leq{b_{\pjtt}}(\mathbf{y})
\end{equation}
where $b_{\pjtt}(\mathbf{y})$ are continuous functions that are negative for all $\mathbf{y}\in\OPtt\setminus{\{\mathbf{0}\}}$, $b_{\pjtt}(\mathbf{0})=0$.\par
Discretising using the collocation points $Y^{\theta}$ (\ref{eqnCollPts}) and the values
\begin{equation}
\label{eqnValb}
    b_{j}^{1,\theta}=b_{\pjtt}(\mathbf{y}^{1,\theta}),\dots,b_{j}^{\Mtt,\theta}=b_{\pjtt}(\mathbf{y}^{\Mtt,\theta})\in\bR^{-}.
\end{equation}
gives us
\begin{equation}
    \label{eqnDiscretProb}
    D_{\pjtt}v(\ymtt)\leq\bjmtt\text{ for all }m=1,\dots,\Mtt
\end{equation}
for $j=1,\dots,\cPtt$, where $v$ is the approximating function we wish to find.\par
As shown in \cite{ward_construction_nodate}, (\ref{eqnDiscretProb}) can also be expressed using functionals. We look to find a function $v\in{H}$ using the information $\bjmtt\in\bR$ generated by the functionals $\lljmtt\in{H}^{*}$, i.e. $\lljmtt(v)\leq\bjmtt$ for all $\theta=1,\dots,\Theta$, $m=1,\dots,\Mtt$ and $j=1,\dots,\cPtt$. We define these functionals as the orbital derivative (\ref{eqnDpjtt}) applied at a particular collocation point $\ymtt$, i.e.
\begin{equation}
    \label{eqnlljmtt}
    (\lljmtt)v(\cdot):=(\delta_{\ymtt}\circ{D_{\pjtt}})v(\cdot)=\nabla{v}(\ymtt)\cdot\mathbf{f}_{\pjtt}(\ymtt)
\end{equation}
for all $\theta=1,\dots,\Theta$, $m=1,\dots,\Mtt$ and $j=1,\dots,\cPtt$. We have introduced this definition of the functionals here so it is more easily comparable with that in Section \ref{secDiscret}, however the elements of the set $\Ptt$ are reordered for each collocation point in the following section when we select a basis of the functions $\mathbf{f}_{\pjtt}$ at this point, see (\ref{eqnPmtt}).\par
As in \cite{ward_construction_nodate}, the optimal reconstruction of $v$ is given by the solution of the problem
\begin{multline}
    \label{eqnMinProb}
    \min\{\|v\|_{H}:v\in{H}\text{, }\lljmtt(v)\leq\bjmtt\text{, for all }\theta=1,\dots,\Theta\text{, }\\m=1,\dots,\Mtt\text{ and }j=1,\dots,\cPtt\}.
\end{multline}
We show that this problem is equivalent to the previous one found in Section \ref{secDiscret} in the following lemma.
\begin{lemma}
    \label{lemEquivMinProbs}
    The minimisation problem (\ref{eqnMinProb}) is equivalent to the minimisation problem (\ref{eqnMinProbi}).
\end{lemma}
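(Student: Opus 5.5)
Both problems minimise the same objective $\|v\|_{H}$ over the same space $H$. The plan is therefore to show that their feasible sets coincide, i.e.\ that the two finite families of linear constraints are the same up to relabelling. Concretely, I will construct a bijection between the index sets
\[
\mathcal{A}:=\{(i,\ell):i\in I,\ \ell=1,\dots,\Li\}
\qquad\text{and}\qquad
\mathcal{B}:=\{(\theta,m,j):\theta=1,\dots,\Theta,\ m=1,\dots,\Mtt,\ j=1,\dots,\cPtt\},
\]
and check that corresponding indices give the same functional and the same bound, i.e.\ $\llil=\lljmtt$ and $\bil=\bjmtt$.

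\textbf{Map from $\mathcal{B}$ to $\mathcal{A}$.} Given $(\theta,m,j)$, set $i:=\pjtt\in\Ptt$. By (\ref{eqnOP}) we have $\OPtt\subseteq C_{i}$, so $\ymtt\in Z\cap C_{i}=X_{i}$, and hence $\ymtt=\xxil$ for a unique $\ell$. Then
\[
\lljmtt(v)=\nabla v(\ymtt)\cdot\mathbf{f}_{i}(\ymtt)=\llil(v)
\qquad\text{and}\qquad
\bjmtt=b_{i}(\ymtt)=\bil .
\]
Here $b_{\pjtt}$ in (\ref{eqnDV}) is taken to be the restriction of $b_{i}$ to $\OPtt$.

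\textbf{Map from $\mathcal{A}$ to $\mathcal{B}$.} Given $(i,\ell)$, the point $\xxil\in Z\subseteq\Omega$ lies in exactly one $\OPtt$ by Lemma \ref{lemPartOP}, namely the one whose $\Ptt$ is the set of all indices $i'$ with $\xxil\in C_{i'}$. This $\Ptt$ contains $i$, so $i=\pjtt$ for a unique $j$. Moreover $\xxil\in Y^{\theta}$, so $\xxil=\ymtt$ for a unique $m$. This is exactly the decomposition $X_{i}=\bigcup_{r}Y^{\eta_{r}}$ in (\ref{eqnXCollPtsUnionYCollPts}).

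The two maps are mutually inverse, because the pair (point, system index) determines both labellings. Consequently the constraint sets $\{\llil(v)\le\bil\}$ and $\{\lljmtt(v)\le\bjmtt\}$ are identical. Since the objective is the same, the problems have the same feasible set, the same infimum and the same minimiser, which is unique by Lemma \ref{lemMinProbOneSol}.

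\textbf{Main obstacle.} The only real work is the bookkeeping, and the essential input is the partition property of Lemma \ref{lemPartOP}: it guarantees that no constraint is duplicated or lost when switching between the $X_{i}$ and $Y^{\theta}$ indexings. Duplicated constraints would in any case be harmless, since they would not change the feasible set. I would also note that the reordering of $\Ptt$ at each collocation point, cf.\ (\ref{eqnPmtt}), only permutes $j$ and so does not affect the argument. Finally, the removal of $\mathbf{0}$ in Algorithm \ref{algMain} is not part of the statement being proved. If one wanted to include it, it is harmless: at $\mathbf{0}$ we have $\mathbf{f}_{i}(\mathbf{0})=\mathbf{0}$ and $b_{i}(\mathbf{0})=0$, so the constraint there reads $0\le 0$ and holds trivially.
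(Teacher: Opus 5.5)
Your proposal is correct and follows essentially the same route as the paper. The paper's two implications are exactly your two index maps: one uses $\OPtt\subseteq C_{i}$ for $i=\pjtt$, and the other uses the decomposition $X_{i}=\bigcup_{r}Y^{\eta_{r}}$ coming from Lemma \ref{lemPartOP}; your additional check that the maps are mutually inverse is valid but not needed, since mutual inclusion of the constraint families already gives equal feasible sets.
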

\begin{proof}
    Another way of writing the constraints of (\ref{eqnMinProbi}) using the definition of the functionals (\ref{eqnllil}) is
    \begin{equation}
        \label{eqnlemEquivMinProbs1}
        \nabla{v}(\xxil)\cdot\mathbf{f}_{i}(\xxil)\leq{b_{i}(\xxil)}
    \end{equation}
    for all $i=1,\dots,k$ and $\ell=1,\dots,L_{i}$. Similarly, another way of writing the constraints of (\ref{eqnMinProb}) using the definition (\ref{eqnlljmtt}) is
    \begin{equation}
        \label{eqnlemEquivMinProbs2}
        \nabla{v}(\ymtt)\cdot\mathbf{f}_{\pjtt}(\ymtt)\leq{b_{\pjtt}(\ymtt)}
    \end{equation}
    for all $\theta=1,\dots,\Theta$, $m=1,\dots,\Mtt$ and $j=1,\dots,\cPtt$.\par
    In the first part of this proof we show that satisfying the constraints (\ref{eqnlemEquivMinProbs2}) means that the constraints of (\ref{eqnlemEquivMinProbs1}) are satisfied, and in the second part we show the reverse.\medskip\\
    (\ref{eqnlemEquivMinProbs2}) $\Rightarrow$ (\ref{eqnlemEquivMinProbs1}): Fix $i\in{I}$ and $\ell\in\{1,\dots,\Li\}$. This gives us a specific collocation point $\xxil$. Let $P^{\eta_{r}}\subseteq{I}$ be the sets such that $i$ is an element for $1\leq{\eta_{r}}\leq\Theta$, $r=1,\dots,R$ and no other set $\Ptt$ for $1\leq\theta\leq\Theta$. This means that (\ref{eqnXCollPtsUnionYCollPts}) holds and we have that
    \begin{equation}
        \label{eqnlemEquivMinProbs3}
        \xxil\in\bigcup_{r=1,\dots,R}Y^{\eta_{r}}.\nonumber
    \end{equation}
    Therefore, there exists a $\theta=\eta_{r}$ for some $r=1,\dots,R$ and a $m=1,\dots,\Mtt$ such that $\xxil=\ymtt$. As we have that $i\in\Ptt$, there exists a $j=1,\dots,\cPtt$ such that $i=\pjtt$. Now using this and $\xxil=\ymtt$ in (\ref{eqnlemEquivMinProbs2}) we have proven this implication.\medskip\\
    (\ref{eqnlemEquivMinProbs1}) $\Rightarrow$ (\ref{eqnlemEquivMinProbs2}): Fix $\theta=1,\dots,\Theta$, $m=1,\dots,\Mtt$ and $j=1,\dots,\cPtt$. Set $i=\pjtt$, then $i\in\Ptt$. By the definition of $\OPtt$ (\ref{eqnOP}) as we have that $i\in\Ptt$, we have that $\OPtt\subseteq{C_{i}}$. Therefore, using the definitions of $X_{i}$ (\ref{eqnCollPtsi}) and $Y^{\theta}$ (\ref{eqnCollPts}) we have for $i\in\Ptt$ that $Y^{\theta}\subseteq{X_{i}}$. This means that for this $i$ there exists an $\ell=1,\dots,\Li$ such that $\ymtt=\xxil$. Using this and that $\pjtt=i$ in (\ref{eqnlemEquivMinProbs1}) we have completed the proof.
\end{proof}

\subsection{Selection of basis}
\label{secBasis}

If the functionals are linearly independent, we can use techniques from \cite{wendland2004scattered} to find a solution to the minimisation problem (\ref{eqnMinProb}). However, this is not necessarily the case and we must use techniques previously described in \cite{ward_construction_nodate} to deal with this. We fix a point $\ymtt\in\OPtt$, consider whether $\mathbf{f}_{\pjtt}(\ymtt)$ for $j=1,\dots,\cPtt$ are linearly independent, and if not, select a basis of the space spanned by these vectors.\par 
To this end, fix $\ymtt\in\OPtt$ and define the set 
\begin{equation}
    \label{eqnRmtt}
    \Rmtt:=\{r_{1}^{m,\theta},\dots,r_{\rrmtt}^{m,\theta}\}\subseteq\Ptt
\end{equation}
to be a set of indices such that the vectors $\mathbf{f}_{\rjmtt}(\ymtt)$ with $j=1,\dots,\rrmtt$ are a basis of 
\begin{equation}
    \label{eqnSpanBasis}
    \text{span(}(\mathbf{f}_{\pjtt}(\ymtt))_{\pjtt\in\Ptt})\nonumber
\end{equation}
with $\rrmtt$ being its rank. The elements of $\Ptt\setminus{\Rmtt}$ are labelled $\pi_{s}^{m,\theta}$ where $s=1,\dots,\cPtt-\rrmtt$. We allow the case where the vectors $\mathbf{f}_{\pjtt}(\ymtt)$ are all linearly independent, this would mean $\Rmtt=\Ptt$.\par
Therefore, we consider the vector
\begin{equation}
    \label{eqnPmtt}
    \Pmtt:=\left(r_{1}^{m,\theta},\dots,r_{\rrmtt}^{m,\theta},\pi_{1}^{m,\theta},\dots,\pi_{\cPtt-\rrmtt}^{m,\theta}\right)
\end{equation}
and write its components as $\pjmtt:=(\Pmtt)_{j}$ for $j=1,\dots,\cPtt$. This means we fix an ordering of the set $\Ptt$ for the collocation point $\ymtt$ where the indices that represent the basis are at the beginning of the vector. We find the vector (\ref{eqnPmtt}) for each point $\ymtt$ for $\theta=1,\dots,\Theta$ and $m=1,\dots,\Mtt$.\par
From this point onward when we refer to the functionals $\lljmtt$ we mean the definition (\ref{eqnlljmtt}) but replacing $\pjtt$ with the components $\pjmtt$ from the vector $\Pmtt$ (\ref{eqnPmtt}).\par
For notational convenience we now define
\begin{equation}
    \label{eqnRhocP}
    \rho^{\theta}:=\sum_{m=1}^{\Mtt}\rrmtt,\phantom{x}\rho:=\sum_{\theta=1}^{\Theta}\sum_{m=1}^{\Mtt}\rrmtt\text{ and }\cP:=\sum_{\theta=1}^{\Theta}\sum_{m=1}^{\Mtt}\cPtt.
\end{equation}\par
From (\ref{eqnPmtt}) we have that the functionals $\lljmtt$ for $j=1,\dots,\rrmtt$ are a basis for the whole set of functionals with $j=1,\dots,\cPtt$. Therefore, the functionals at the point $\ymtt$ are expressed as a linear combination of this basis and we write this as
\begin{equation}
    \label{eqnllssmtt}
    \llssmtt=\sum_{j=1}^{\rrmtt}\aassjmtt\lljmtt
\end{equation}
for all $\sigma=1,\dots,\cPtt$, for some appropriate $\aassjmtt\in\bR$. We assemble the coefficients $\aassjmtt$ for all of the collocation points $\ymtt$ 
as elements of a matrix $Q\in\bR^{\cP\times\rho}$ such that
\begin{equation}
    \label{eqnQ}
    Q:=\left(\begin{array}{c}
    I\\
    \Tilde{Q}
    \end{array}\right)
\end{equation}
where $I$ is the $\rho\times\rho$ identity matrix accounting for the coefficients $\alpha_{s,j}^{m,\theta}$ for $\theta=1,\dots,\Theta$, $m=1,\dots,\Mtt$, $s,j=1,\dots,\rrmtt$ and $\Tilde{Q}$ is a block diagonal matrix that contains the remaining coefficients. The structure of this matrix is described in \cite{ward_construction_nodate}.

\subsection{Quadratic programming problem}
\label{secQuadProg}

We now wish to show that the solution of (\ref{eqnMinProb}) with the functionals $\lljmtt$ defined above is of the form
\begin{equation}
    \label{eqnLyapv}
    v(\mathbf{x})=\sum_{\theta=1}^{\Theta}\sum_{m=1}^{\Mtt}\sum_{j=1}^{\rrmtt}\bbjmtt(\lljmtt)^{\mathbf{y}}\Phi(\mathbf{x},\mathbf{y})
\end{equation}
where the coefficients $\bbjmtt$ are elements of the vector $\boldsymbol{\beta}\in\bR^{\rho}$ which satisfies
\begin{equation}
    \label{eqnbTAb}
    \begin{cases}
        \text{\rm{minimise }}\boldsymbol{\beta}^{T}A\boldsymbol{\beta}\\
        \text{\rm{subject to }}QA\boldsymbol{\beta}\leq\mathbf{b}
    \end{cases}
\end{equation}
To find the matrices $Q$ and $A$ we applied our functionals to our proposed solution (\ref{eqnLyapv}) and used the inequalities in (\ref{eqnMinProb}).\par
The elements of $A\in\bR^{\rho\times\rho}$ are
\begin{equation}
    \label{eqnAElem}
    \aijeettlm:=(\delta_{\ymtt}\circ{D_{\pjmtt}})^{\mathbf{x}}(\delta_{\ylee}\circ{D_{\pilee}})^{\mathbf{y}}\Phi(\mathbf{x},\mathbf{y})=\langle(\lljmtt)^{\mathbf{x}}\Phi(\cdot,\mathbf{x}),(\llilee)^{\mathbf{y}}\Phi(\cdot,\mathbf{y})\rangle_{H}
\end{equation}
for $\eta,\theta=1,\dots,\Theta$, $\ell=1,\dots,\Mee$, $m=1,\dots,\Mtt$, $i=1,\dots,\rrlee$ and $j=1,\dots,\rrmtt$. This matrix is symmetric and accounts for the linearly independent functionals. Applying the matrix $Q$ (\ref{eqnQ}) accounts for the linearly dependent functionals as a linear combination of the independent ones (\ref{eqnllssmtt}). The values of the vector $\mathbf{b}\in\bR^{\cP}$ are the elements $\bjmtt$ from the minimisation problem (\ref{eqnMinProb}) for all $\theta=1,\dots,\Theta$, $m=1,\dots,\Mtt$ and $j=1,\dots,\cPtt$.\par
We can also see that minimising $\boldsymbol{\beta}^{T}A\boldsymbol{\beta}$ as in (\ref{eqnbTAb}) is the same as minimising $\|v\|_{H}$ in the following way
\begin{eqnarray}
    \|v\|^{2}_{H} & = & \left\langle\sum_{\eta=1}^{\Theta}\sum_{\ell=1}^{\Mee}\sum_{i=1}^{\rrlee}\bbilee(\llilee)^{\mathbf{y}}\Phi(\cdot,\mathbf{y}),\sum_{\theta=1}^{\Theta}\sum_{m=1}^{\Mtt}\sum_{j=1}^{\rrmtt}\bbjmtt(\lljmtt)^{\mathbf{z}}\Phi(\cdot,\mathbf{z})\right\rangle_{H}\nonumber\\
    & = & \sum_{\eta=1}^{\Theta}\sum_{\ell=1}^{\Mee}\sum_{i=1}^{\rrlee}\sum_{\theta=1}^{\Theta}\sum_{m=1}^{\Mtt}\sum_{j=1}^{\rrmtt}\bbilee\bbjmtt\langle(\llilee)^{\mathbf{y}}\Phi(\cdot,\mathbf{y}),(\lljmtt)^{\mathbf{z}}\Phi(\cdot,\mathbf{z})\rangle_{H}\nonumber\\
    & = & \sum_{\eta=1}^{\Theta}\sum_{\ell=1}^{\Mee}\sum_{i=1}^{\rrlee}\sum_{\theta=1}^{\Theta}\sum_{m=1}^{\Mtt}\sum_{j=1}^{\rrmtt}\bbilee\bbjmtt\aijeettlm\nonumber\\
    & = & \boldsymbol{\beta}^{T}A\boldsymbol{\beta}.\nonumber
\end{eqnarray}\par
Now we are able to show that if there is a feasible solution to (\ref{eqnbTAb}), the solution of (\ref{eqnMinProb}) is of the form (\ref{eqnLyapv}). We do this with the following lemma and theorem, the proofs of which are found in \cite{ward_construction_nodate}. 
\begin{lemma}
    \label{lemUniqueSolbTAb}
    If there is a feasible solution (i.e. a $\boldsymbol{\beta}$ that satisfies the constraints) of the problem (\ref{eqnbTAb}), then the solution of the minimising problem (\ref{eqnbTAb}) is unique.
\end{lemma}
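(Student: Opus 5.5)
The plan is to show that $A$ is symmetric positive definite. Then the objective $\boldsymbol{\beta}\mapsto\boldsymbol{\beta}^{T}A\boldsymbol{\beta}$ is strictly convex, and the feasible set $\{\boldsymbol{\beta}:QA\boldsymbol{\beta}\leq\mathbf{b}\}$ is a closed convex polyhedron, nonempty by hypothesis. Existence and uniqueness of a minimiser then follow from standard convex-analysis arguments.

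\textbf{Step 1: $A$ is a Gram matrix.} By (\ref{eqnAElem}), $A$ is the Gram matrix of the Riesz representers $(\lljmtt)^{\mathbf{x}}\Phi(\cdot,\mathbf{x})\in H$, taken over $\theta=1,\dots,\Theta$, $m=1,\dots,\Mtt$ and $j=1,\dots,\rrmtt$. The computation preceding the lemma gives $\boldsymbol{\beta}^{T}A\boldsymbol{\beta}=\|v\|_{H}^{2}\geq 0$, where $v$ is defined by (\ref{eqnLyapv}). Hence $A$ is symmetric positive semidefinite, and it is positive definite exactly when these representers, equivalently the functionals $\lljmtt\in H^{*}$, are linearly independent.

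\textbf{Step 2: linear independence of the retained functionals.} This is the main obstacle. Consider a vanishing combination $\sum\beta_{j}^{m,\theta}\lljmtt=0$ in $H^{*}$. The points $\ymtt$ are pairwise distinct, because the sets $Y^{\theta}$ are disjoint by Lemma~\ref{lemPartOP} and the points within each $Y^{\theta}$ are distinct. The combination can be rewritten as $\sum_{\mathbf{y}}\delta_{\mathbf{y}}\circ(\mathbf{c}_{\mathbf{y}}\cdot\nabla)=0$, where $\mathbf{c}_{\mathbf{y}}=\sum_{j}\beta_{j}^{m,\theta}\mathbf{f}_{\pjmtt}(\mathbf{y})$. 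I would then test this identity against functions in $H^{\alpha}(\Omega)$ that are smooth bumps localised near a single point $\mathbf{y}$ and linear in a chosen direction there. Such test functions exist because $C_{c}^{\infty}(\Omega)\subseteq H$. Testing gives $\mathbf{c}_{\mathbf{y}}=\mathbf{0}$ for every $\mathbf{y}$.

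Since the vectors $\mathbf{f}_{\rjmtt}(\ymtt)$, $j=1,\dots,\rrmtt$, form a basis by the construction (\ref{eqnRmtt}), it follows that all $\beta_{j}^{m,\theta}=0$. One caveat is a point $\ymtt$ at which all $\mathbf{f}_{\pjtt}(\ymtt)=\mathbf{0}$, so that $\rrmtt=0$. Such a point contributes no functionals and causes no problem; Algorithm~\ref{algMain} removes $\mathbf{0}$ in any case. Alternatively, this argument is presumably the content of the corresponding result in \cite{ward_construction_nodate}, which could be cited instead.

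\textbf{Step 3: conclusion.} With $A$ positive definite, the objective is coercive and strictly convex on $\bR^{\rho}$. Its restriction to the nonempty closed convex feasible set therefore attains a minimum: one can restrict to a compact sublevel set intersected with the feasible set. If $\boldsymbol{\beta}_{1}\neq\boldsymbol{\beta}_{2}$ were both minimisers, their midpoint would be feasible by convexity. Since $A$ is positive definite, it would have the strictly smaller value
\[
\tfrac{1}{2}\boldsymbol{\beta}_{1}^{T}A\boldsymbol{\beta}_{1}+\tfrac{1}{2}\boldsymbol{\beta}_{2}^{T}A\boldsymbol{\beta}_{2}-\tfrac{1}{4}(\boldsymbol{\beta}_{1}-\boldsymbol{\beta}_{2})^{T}A(\boldsymbol{\beta}_{1}-\boldsymbol{\beta}_{2}),
\]
a contradiction. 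This mirrors the convexity argument used in Lemma~\ref{lemMinProbOneSol} and gives uniqueness.
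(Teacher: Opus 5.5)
Your proof is correct. The paper does not prove this lemma itself but defers to \cite{ward_construction_nodate}; your route is the standard argument for such a statement: positive definiteness of the Gram matrix $A$ from linear independence of the retained functionals $\lljmtt$, $j\le\rrmtt$, at pairwise distinct points, then strict convexity of $\boldsymbol{\beta}^{T}A\boldsymbol{\beta}$ on the nonempty polyhedron $\{QA\boldsymbol{\beta}\le\mathbf{b}\}$. One minor refinement: collocation points may lie on $\partial\Omega$ (e.g.\ $\Omega=[-0.5,0.5]^{2}$ in Example~\ref{exNLArb}), so localise with restrictions to $\Omega$ of functions in $C_c^{\infty}(\bR^{d})$, which also belong to $H$, rather than with $C_c^{\infty}(\Omega)$.
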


\begin{theorem}
    \label{thmSolMinProb}
    If there is a feasible solution to the problem (\ref{eqnbTAb}), the minimiser of the problem (\ref{eqnMinProb}) is of the form (\ref{eqnLyapv}) where the coefficients $\bbjmtt$ are determined by the solution of the problem (\ref{eqnbTAb}).
\end{theorem}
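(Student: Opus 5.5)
The plan is to reduce (\ref{eqnMinProb}) to a finite-dimensional problem by a representer-type argument, and then match that problem to (\ref{eqnbTAb}) through the parametrisation (\ref{eqnLyapv}).

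\textbf{Step 1 (existence and uniqueness).} Let $\boldsymbol{\beta}$ be feasible for (\ref{eqnbTAb}). Applying the functionals to (\ref{eqnLyapv}) gives exactly the vector $QA\boldsymbol{\beta}$, because of (\ref{eqnAElem}) and the linear relations (\ref{eqnllssmtt}) encoded in $Q$. Hence the corresponding $v$ is feasible for (\ref{eqnMinProb}). The feasible set of (\ref{eqnMinProb}) is therefore nonempty, closed and convex, since it is an intersection of half-spaces defined by bounded functionals in $H^{*}$. The norm is strictly convex, so a minimiser $s$ exists by projecting $0$ onto this set. By Lemma \ref{lemMinProbOneSol}, together with Lemma \ref{lemEquivMinProbs}, the minimiser is unique.

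\textbf{Step 2 (representer theorem).} Let $W\subseteq H$ be the finite-dimensional span of the Riesz representers $(\lljmtt)^{\mathbf{y}}\Phi(\cdot,\mathbf{y})$ for $j\le\rrmtt$. By (\ref{eqnllssmtt}), $W$ also contains the representers of all the dependent functionals. Decompose $s=s_{W}+s_{\perp}$ with $s_{\perp}\in W^{\perp}$. Every functional $\llssmtt$ annihilates $s_{\perp}$, so $s_{W}$ satisfies the same constraints as $s$. Moreover $\|s_W\|_H\le\|s\|_H$, with equality only if $s_\perp=0$. Minimality then forces $s=s_W$, so $s$ has the form (\ref{eqnLyapv}) for some $\boldsymbol{\beta}$.

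\textbf{Step 3 (identifying $\boldsymbol{\beta}$).} For $v$ of the form (\ref{eqnLyapv}), we already have $\|v\|_H^2=\boldsymbol{\beta}^TA\boldsymbol{\beta}$ by the computation preceding the lemma, and the constraints of (\ref{eqnMinProb}) are exactly $QA\boldsymbol{\beta}\le\mathbf{b}$. So minimising $\|v\|_H$ over $W$ is the same problem as (\ref{eqnbTAb}). By Lemma \ref{lemUniqueSolbTAb}, this problem has a unique solution $\boldsymbol{\beta}^*$, and $s$ corresponds to $\boldsymbol{\beta}^*$.

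\textbf{Main obstacle: the basis step.} The subtle point is that the reduction requires the representers indexed by $j\le\rrmtt$ across different collocation points and regions to be linearly independent. Without this, $A$ is only positive semidefinite and $\boldsymbol{\beta}$ is not determined by $v$. Within a single point, independence follows from the basis choice (\ref{eqnRmtt}). Across distinct points, I would argue that the functionals $\delta_{\mathbf{y}}\circ(\mathbf{w}\cdot\nabla)$ with distinct points (or linearly independent directions $\mathbf{w}$ at the same point) are linearly independent on $H^{\alpha}(\Omega)$. I would do this by constructing localized bump functions in $H$. This requires excluding $\mathbf{0}$, which is the reason for step 2 of Algorithm \ref{algMain}, and assuming that the basis vectors $\mathbf{f}_{r_j^{m,\theta}}(\ymtt)$ are nonzero. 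Once independence holds, $A$ is a Gram matrix of independent elements and hence positive definite. The correspondence $\boldsymbol{\beta}\leftrightarrow v$ is then bijective, and Steps 1–3 go through.
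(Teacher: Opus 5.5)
The paper itself defers this proof to \cite{ward_construction_nodate}. Your argument is correct and is the standard representer-type proof of this statement: you project a minimiser $s$ of (\ref{eqnMinProb}) onto the span $W$ of the Riesz representers $(\lljmtt)^{\mathbf{y}}\Phi(\cdot,\mathbf{y})$, which by (\ref{eqnllssmtt}) carries all the constraints, and then use $\|v\|_H^2=\boldsymbol{\beta}^TA\boldsymbol{\beta}$ together with the fact that on $W$ the constraints read $QA\boldsymbol{\beta}\le\mathbf{b}$.

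One correction to your ``main obstacle''. Linear independence of the representers is needed only for uniqueness of $\boldsymbol{\beta}$ (Lemma \ref{lemUniqueSolbTAb}), not for the form of $s$. Since $s\in W$ by your Step 2, any solution $\boldsymbol{\beta}^{*}$ of (\ref{eqnbTAb}) gives a feasible $v$ with $\|v\|_H^2=\boldsymbol{\beta}^{*T}A\boldsymbol{\beta}^{*}\le\|s\|_H^2$, and hence $v=s$. Also, removing $\mathbf{0}$ plays no role in that independence, because $\mathbf{f}_i(\mathbf{0})=\mathbf{0}$ gives $\rrmtt=0$ at that point.
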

\begin{rem}
    \label{remFeasible}
    We can check if there is a feasible solution to (\ref{eqnbTAb}) computationally; then there exists a unique minimiser to each discretised problem (\ref{eqnthmConvergence4}). This is due to Theorem \ref{thmSolMinProb} as we know that there will exist a Lyapunov function for the system of the form (\ref{eqnLyapv}). We choose these functions as our sequence $v_{n}^{\varepsilon}$ in Theorem \ref{thmConvergence} with $\varepsilon=0$.
\end{rem}

\section{Examples}
\label{secExamples}

In this section we give two examples where we construct Lyapunov functions for switched systems using Algorithm \ref{algMain}. Example \ref{exNLArb} is a non-linear arbitrary switched system and Example \ref{exLArb+VarStructR3} uses a combination of arbitrary and variable structure switching in $\bR^{3}$. In both of these examples we have taken the Lyapunov function $v$ that is constructed by our algorithm, and computed a new Lyapunov function $v_{\rm new}(\mathbf{x}):=v(\mathbf{x})-v(\mathbf{0})$. This function has its minimum value at zero ensuring we meet the assumption outlined in (\ref{eqnLyapSwitch}). We are able to do this as the orbital derivatives of $v(\mathbf{x})$ and $v_{\rm new}(\mathbf{x})$ are the same.\par
The Wendland function that we use in the implementation of Algorithm \ref{algMain} is $\phi_{4,2}$ from \cite{giesl2007constructionRB} which means that we have
\begin{equation}
    \label{eqnWendlandFunc}
    \Psi_{0}(r)=(1-cr)^{6}_{+}[35(cr)^{2}+18cr+3]
\end{equation}
where $c=\frac{5}{6}$.

\begin{example}{\rm (Non-linear arbitrary switched system)}
\label{exNLArb}
    $\Omega=[-0.5,0.5]^{2}$ and the values of the vector $\mathbf{b}$ are found using that $b_{\pjtt}(x,y)=-0.25x^{2}-0.25y^{2}$ for all $\theta=1,\dots,\Theta$ and $j=1,\dots,\cPtt$. Consider the following autonomous systems with asymptotically stable equilibrium points at the origin
    \begin{equation}
        \label{eqnexNLArb1}
        \dot{\mathbf{x}}=\mathbf{f}_{1}(x,y)\text{, where }\mathbf{f}_{1}(\mathbf{x}):=\left(\begin{array}{c}
            y\\
            x-2\tan^{-1}(x+y)
        \end{array}\right)
    \end{equation}
    and
    \begin{equation}
        \label{eqnexNLArb2}
        \dot{\mathbf{x}}=\mathbf{f}_{2}(x,y)\text{, where }\mathbf{f}_{2}(\mathbf{x}):=\left(\begin{array}{c}
            y\\
            -x+\frac{1}{3}x^{3}-y
        \end{array}\right).
    \end{equation}
    The system (\ref{eqnexNLArb1}) is from Exercise 2.4 in \cite{KhalilNonlinear}, and (\ref{eqnexNLArb2}) is from Example 1 in \cite{giesl2015computation}. In this example we switch arbitrarily between these two systems over the whole space $\Omega$. This is written as
    \begin{equation}
        \label{eqnexNLArb3}
        \dot{\mathbf{x}}=\mathbf{f}_{p}(\mathbf{x})\text{, }p\in\{1,2\}
    \end{equation}
    where $\mathbf{x}\in\Omega$. We apply our algorithm using $\Theta=1$ and $\Omega^{P}=\Omega$, where $P=\{1,2\}$. The collocation points are selected to be $\frac{1}{6}\bZ^{2}\cap\Omega\setminus\{(0,0)\}$, this gives us 48 collocation points in total. The Lyapunov function produced for this system can be seen in Figure \ref{fig:LyapexNLArb}, and its orbital derivatives in Figure \ref{fig:OrbDervexNLArb} which are negative except at the origin.
    \begin{figure}[h]
        \centering
        \includegraphics[width=0.5\linewidth]{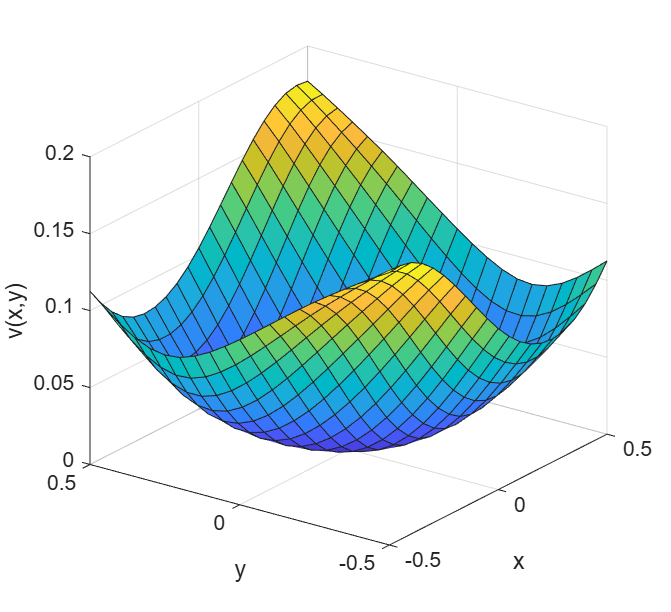}
        \caption{Lyapunov function constructed for the arbitrary switched system (\ref{eqnexNLArb3}) from Example \ref{exNLArb}.}
        \label{fig:LyapexNLArb}
    \end{figure}
    \begin{figure}[h]
        \centering
        \includegraphics[width=0.45\linewidth]{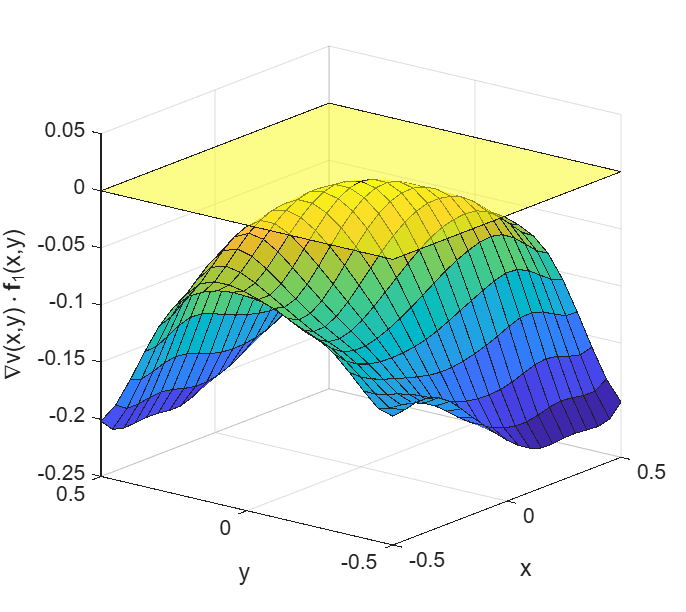}
        \includegraphics[width=0.45\linewidth]{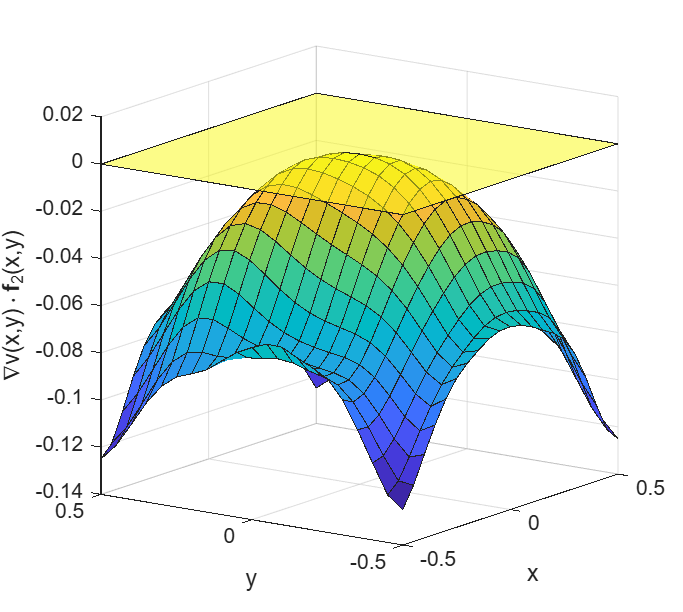}
        \caption{Plots of the orbital derivatives of the Lyapunov function from Figure \ref{fig:LyapexNLArb}: $\nabla{v}(x,y)\cdot\mathbf{f}_{1}(x,y)$ (left) and $\nabla{v}(x,y)\cdot\mathbf{f}_{2}(x,y)$ (right). In both figures the plane $\{z=0\}$ has been plotted to illustrate that the orbital derivatives are negative except for at the origin.}
        \label{fig:OrbDervexNLArb}
    \end{figure}
\end{example}

\begin{example}{\rm (Non-linear arbitrary and variable structure system in $\bR^3$)}
\label{exLArb+VarStructR3}
    $\Omega=[-0.8,0.8]^{3}$  and the values of the vector $\mathbf{b}$ are found using that $b_{\pjtt}(x,y,z)=-0.25x^{2}-0.25y^{2}-0.25z^{2}$ for all $\theta=1,\dots,\Theta$ and $j=1,\dots,\cPtt$. Take the following two systems with asymptotically stable equilibrium points at the origin
    \begin{equation}
        \label{eqnexLArb+VarStructR31}
        \dot{\mathbf{x}}=\mathbf{f}_{1}(x,y,z)\text{, where }\mathbf{f}_{1}(x,y,z):=\left(\begin{array}{c}
            x(x^{2}+y^{2}-1)-y(z^{2}+1) \\
            y(x^{2}+y^{2}-1)+x(z^{2}+1) \\
            10z(z^{2}-1)
        \end{array}\right)
    \end{equation}
    and
    \begin{equation}
        \label{eqnexLArb+VarStructR32}
        \dot{\mathbf{x}}=\mathbf{f}_{2}(x,y,z)\text{, where }\mathbf{f}_{2}(x,y,z):=\left(\begin{array}{c}
            -2x+x^{3} \\
            -y+x^{2} \\
            -z
        \end{array}\right).
    \end{equation}
    The system (\ref{eqnexLArb+VarStructR31}) is from Example 2 of \cite{giesl2015computation}, and (\ref{eqnexLArb+VarStructR32}) is from Exercise 4.32 of \cite{KhalilNonlinear}. We define $\mathbf{f}_{1}$ on the whole space $\Omega$, and $\mathbf{f}_{2}$ on a subset  $S$ of $\Omega$ defined as
    \begin{equation}
        \label{eqnexLArb+VarStructR33}
        S:=\{\mathbf{x}\in\Omega:z\geq{10x^{2}+10y^{2}}\}.
    \end{equation}
    The set $S$ is shown on the left of Figure \ref{fig:LevelSetexLArb+VarStructR3} to show how the space $\Omega$ is partitioned.\par
    This means we can define a switched system with both time- and state-dependent switching as follows
    \begin{equation}
        \label{eqnexLArb+VarStructR34}
        \dot{\mathbf{x}}=\mathbf{f}_{p}(\mathbf{x})\text{, }p\in\{1,2\}
    \end{equation}
    where $p=1$ when $\mathbf{x}\in\Omega\setminus{S}$, and $p$ switches arbitrarily between 1 and 2 when $\mathbf{x}\in{S}$. We can now apply Algorithm \ref{algMain} by taking $\Theta=2$; $P^{1}=\{1\}$ and $P^{2}=\{1,2\}$; and $\Omega^{P^{1}}=\Omega\setminus{S}$ and $\Omega^{P^{2}}=S$. The collocation points are selected to be $\frac{1}{6}\bZ^{3}\cap\Omega\setminus\{(0,0,0)\}$, this gives us 324 collocation points in total. Figure \ref{fig:LevelSetexLArb+VarStructR3} shows the partition of $\Omega$ on the left, with $\Omega^{P^{2}}$ in red and $\Omega^{P^{1}}$ making up the remaining space. On the right, the level set $S_{0.085}$ of the Lyapunov function has been plotted. The orbital derivatives of the Lyapunov function were also found and are negative in $\Omega$ except for at the origin.
    \begin{figure}
        \centering
        \includegraphics[width=0.45\linewidth]{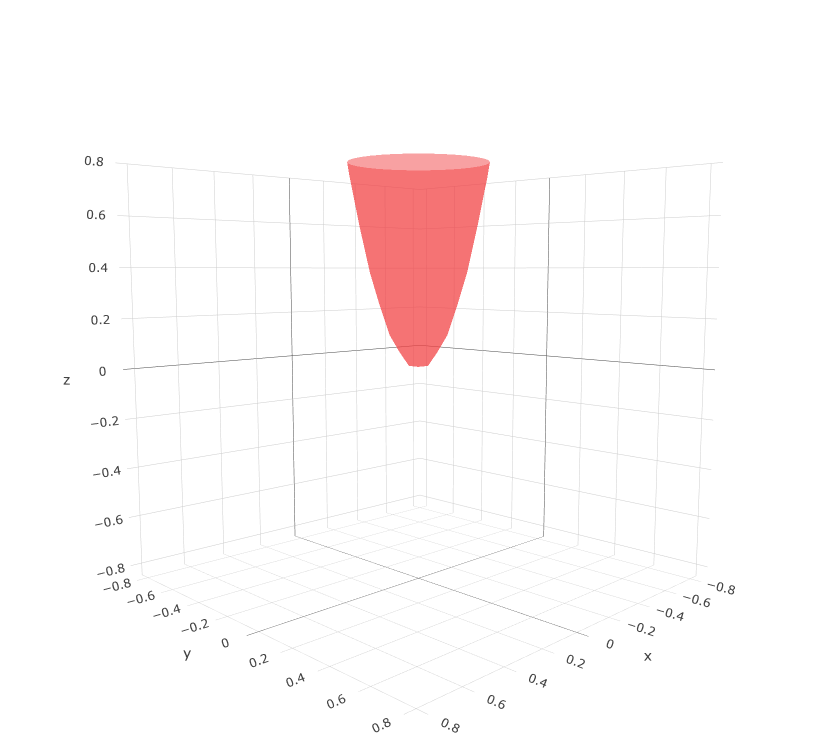}
        \includegraphics[width=0.45\linewidth]{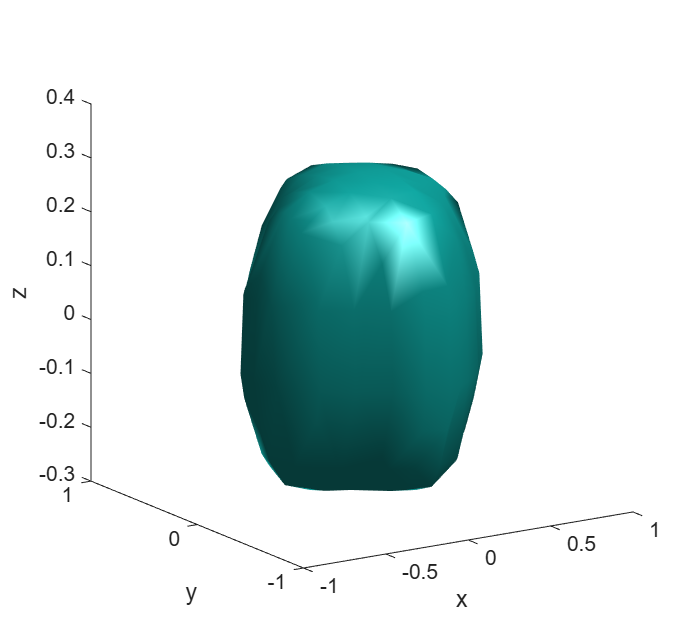}
        \caption{Left: The space $\Omega$ partitioned into two sets, $\Omega^{P^{2}}=S$ (\ref{eqnexLArb+VarStructR33}) in red and $\Omega^{P^{1}}$ being the remaining space.\\
        Right: The level set $S_{0.085}$ (see (\ref{eqnthmStability1})) of the Lyapunov function constructed for the switched system (\ref{eqnexLArb+VarStructR34}) from Example \ref{exLArb+VarStructR3}. The orbital derivatives of the Lyapunov function were found and are negative in $\Omega$ except for at the origin.}
        \label{fig:LevelSetexLArb+VarStructR3}
    \end{figure}
\end{example}

\begin{rem}
    \label{remVerif}
    Checking that the orbital derivative is negative was done by computing the values at the evaluation points. To obtain rigorous guarantees for the negativity you can either: use the form of $v$ obtained by meshfree collocation and Taylor-type estimates, similar to \cite{giesl2015computation}, or  interpolate the function $v$ by a continuous piecewise affine (CPA) function, for which estimates can easily be checked, similar to \cite{giesl2019verification}.
\end{rem}


\section{Conclusion}
\label{secConc}


In this paper, we have shown in Theorem \ref{thmConvergence} that the algorithm to construct a Lyapunov function for the switched system (\ref{eqnfis}) originally presented in \cite{ward_construction_nodate} converges under some specific assumptions. One such assumption is that the fill distance (\ref{eqnthmConvergence3}) decreases as the number of collocation points increases. In Section \ref{secFillDistance} we gave a way to choose collocation points so that the assumption on the fill distance is met. The construction method from \cite{ward_construction_nodate} applies to both arbitrary switched and variable structure systems, as well as systems that have a combination of time- and state-dependent switching. We have provided further details on this method and given two new examples. Another main result is that the existence of a Lyapunov function for a switched system implies that the origin is a uniformly asymptotically stable equilibrium point of the system.\par
In Theorem \ref{thmConvergence} we also assumed the existence of a function $V_{0}$. In future work, we will provide sufficient conditions for the existence of such a function.

\end{document}